\documentclass[11pt,a4paper]{article}

\usepackage[T1]{fontenc}
\usepackage{lmodern}
\usepackage{microtype}
\usepackage{amsmath,amssymb,amsthm,mathtools}
\usepackage[a4paper,left=1.08in,right=1.08in,top=0.96in,bottom=0.98in]{geometry}
\usepackage{enumitem}
\usepackage[colorlinks=true,linkcolor=blue,citecolor=blue,urlcolor=blue]{hyperref}

\usepackage{lineno}
\setlist{nosep,leftmargin=2.0em}

\hypersetup{
	colorlinks=true,
	linkcolor=blue,
	filecolor=blue,
	urlcolor=blue,
	citecolor=cyan,
}

\newtheorem{theorem}{Theorem}[section]
\newtheorem{lemma}[theorem]{Lemma}
\newtheorem{proposition}[theorem]{Proposition}
\newtheorem{corollary}[theorem]{Corollary}
\newtheorem{question}[theorem]{Question}

\newcommand{\inj}{\mathrm{inj}}
\newcommand{\ex}{\mathrm{ex}}
\newcommand{\N}{\mathcal N}
\newcommand{\ed}{\mathrm{dist}_{\mathrm{edit}}}
\newcommand{\fall}[2]{(#1)_{#2}}

\title{Every graph is eventually Tur\'an-good and Tur\'an-stable}

\author{Jiabao Yang\footnote{Email: jbyang1215@nju.edu.cn}\\
\small{School of Mathematics, Nanjing University, Nanjing 210093, P.R. China}}

\date{}

\begin{document}
\maketitle

\begin{abstract}
Let $T_r(n)$ denote the complete $r$-partite graph on $n$ vertices whose part sizes differ by at most one. 
A graph $H$ is called $K_{r+1}$-Tur\'an-good if, for all sufficiently large $n$, the graph $T_r(n)$ contains the maximum number of copies of $H$ among all $n$-vertex $K_{r+1}$-free graphs. 
We say that $H$ is $K_{r+1}$-Tur\'an-stable if, for every $\varepsilon>0$, there exist $\delta>0$ and $n_0$ such that, whenever $n\ge n_0$ and an $n$-vertex $K_{r+1}$-free graph $G$ contains at least $\mathrm{ex}(n,H,K_{r+1})-\delta n^{v(H)}$ copies of $H$, the edit distance between $G$ and $T_r(n)$ is at most $\varepsilon n^2$,
where $v(H)$ denotes the order of $H$.

In this paper, we prove that every graph $H$ is both $K_{r+1}$-Tur\'an-good and $K_{r+1}$-Tur\'an-stable for $r\ge 4v(H)^3+11v(H)^2$. 
This strengthens results of Morrison, Nir, Norin, Rz\k{a}\.{z}ewski, and Wesolek~[J. Combin. Theory Ser. B, 2023] and Gerbner and Hama Karim~[J. Graph Theory, 2024], and gives a positive answer to a question of Morrison, Nir, Norin, Rz\k{a}\.{z}ewski, and Wesolek. We also prove that
$\inj(H,G)\le \inj(H,T_r(n))$
for every $n$-vertex $K_{r+1}$-free graph $G$ and for $r\ge 40v(H)^3$, where $\inj(H,G)$ denotes the number of injective homomorphisms from $H$ to $G$. Finally, we give a negative answer to another question of Morrison, Nir, Norin, Rz\k{a}\.{z}ewski, and Wesolek.
\end{abstract}

\noindent\textbf{Keywords.} Generalized Tur\'an problem, Tur\'an-good, Tur\'an-stability, monotonicity

\section{Introduction}
For a graph $H$, we write $v(H)=|V(H)|$ and $e(H)=|E(H)|$.
We say a graph $G$ is {\it $F$-free} if $G$ does not contain a copy of $F$ as a subgraph.
A basic result in extremal Combinatorics is Turán's theorem. 
Let $K_{r+1}$ be the complete graph on $r+1$ vertices.
In 1941, Tur\'{a}n \cite{turan1941} proved that the unique $n$-vertex $K_{r+1}$-free graph with maximum number of edges is  the Tur\'an graph $T_r(n)$,
which is the complete $r$-partite graph where each partite class has cardinality $\lfloor n/r\rfloor$ or $\lceil n/r \rceil$. 

In 2016, Alon and Shikhelman \cite{Alon2016} initiated the systematic study of the generalized Tur\'{a}n problem. 
For a pair of graph $H$ and $G$, let $\N(H,G)$ be the number of subgraphs of $G$ isomorphic to $H$.
The generalized Tur\'{a}n number which is the maximum possible number of copies of $H$ in $F$-free graphs of order $n$ is denoted by $\mathrm{ex}(n,H,F)$, that is 
$$\mbox{ex}(n,H,F):=\max\{\N(H,G):v(G)=n \mbox{ and }G \mbox{ is $F$-free}\}.$$
Since then, the generalized Tur\'{a}n problems has received a lot of attentions, see e.g.
\cite{Chenya2024,Erdos1962,Furedi2015,Gerbner2020,Gerbner2019,Luo2017,Lv2024,Yang2024,ZhangChenGyoriZhu2024}.

However, there are not many exact results in this area.
In many cases where the extremal problem is concrete, the extremal graph turns out to be the Tur\'an graph. 
The chromatic number $\chi(H)$ of a graph $H$ is the minimum number of colors needed to color the vertices of $H$ such that no two adjacent vertices share the same color.
Let $F$ be a graph with chromatic number $r+1$. 
A graph $H$ is called $F$\emph{-Tur\'an-good} if, for all sufficiently large $n$, 
the Tur\'an graph $T_r(n)$ contains the maximum possible number of copies of $H$ among all $n$-vertex $ F$-free graphs. 
In other words, $\mathrm{ex}(n, H, F) = N(H, T_r(n))$.
Although the term Tur\'an-good was introduced only recently by Gerbner and Palmer~\cite{Gerbner2022}, problems of this type were studied much earlier by Gy\H{o}ri, Pach, and Simonovits~\cite{Gyori1991}.

Gerbner and Palmer~\cite{Gerbner2022} conjectured that, for every graph $H$, there exists an integer $r_0=r_0(H)$ such that $H$ is $K_{r+1}$-Tur\'an-good when $r\ge r_0(H)$. 
This conjecture has been proved for several families of graphs, including stars~\cite{Cutler2022}, complete multipartite graphs~\cite{Gerbner2022}, paths~\cite{Gerbner2023}, and the cycle $C_5$~\cite{Lidicky2021}. 
Recently, Morrison, Nir, Norin, Rz\k{a}\.{z}ewski, and Wesolek~\cite{Morrison2023} proved the conjecture holds with $r_0=300v(H)^9$.

\begin{theorem}[Morrison, Nir, Norin, Rz\k{a}\.{z}ewski, and Wesolek~\cite{Morrison2023}]\label{thm:Morrison2023}
Let $H$ be a graph and $r\ge 300v(H)^9$. Then $H$ is $K_{r+1}$-Tur\'an-good.
\end{theorem}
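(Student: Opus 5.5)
We plan to reproduce the standard two-stage scheme of Morrison et al.: first an asymptotic (limit) statement showing that, among $K_{r+1}$-free graphs, the normalized count of $H$ is maximized by the balanced complete $r$-partite graph. Then a stability-plus-cleaning argument upgrades this to the exact statement for large $n$. Throughout we work with $\inj(H,G)$, since $\N(H,G)=\inj(H,G)/|\mathrm{Aut}(H)|$. We also set $h=v(H)$.

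\textbf{Step 1: symmetrization.} We first reduce to complete multipartite graphs. By Zykov symmetrization, for nonadjacent $u,v$ the quantity $\inj(H,G)$ decomposes as
\[
\inj(H,G)=A+B_u+B_v+C_{uv},
\]
where $A$ counts copies avoiding both vertices, $B_u$ and $B_v$ count copies using exactly one of them, and $C_{uv}$ counts copies using both. Cloning $u$ to $v$ or $v$ to $u$ preserves $K_{r+1}$-freeness. The obstruction is the $C_{uv}$ term, which is not linear. We expect this term to be of order $n^{h-2}$, negligible against the $n^{h-1}$ scale of $B_u$. Passing to the limit and approximating, we should be able to restrict attention to complete $r$-partite graphs with part weights $x_1,\dots,x_r$.

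\textbf{Step 2: the weight optimization.} For weights $x=(x_1,\dots,x_r)$, the injective density is a polynomial: the sum over proper colorings $\phi$ of $H$ into $[r]$ of $\prod_v x_{\phi(v)}$. Because $r\gg h$, most colorings are injective on $V(H)$. Expanding $\prod$ in terms of power sums $p_k=\sum_i x_i^k$, the dominant term is $p_1^h=1$. The next terms are negative multiples of $p_2$, coming from pairs of nonadjacent vertices sharing a color, with lower-order corrections involving $p_3,p_2^2,\dots$ whose coefficients are bounded by $h^{O(1)}$. Writing $x_i=1/r+\varepsilon_i$, the $p_2$ term is minimized exactly at $\varepsilon=0$ with a quadratic gain of order $\binom{h}{2}\sum\varepsilon_i^2$. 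The higher terms perturb this only by $O(h^{c}/r)\sum\varepsilon_i^2$. Hence for $r\ge C h^c$ the uniform weighting is the strict local maximum. A separate crude argument rules out far-from-uniform weightings, including those with fewer than $r$ nonzero parts, since those lose at the first-order level. The exponent in $300h^9$ arises from bounding these coefficient sums. We expect this polynomial inequality, uniform over the whole simplex, to be the main obstacle. The bookkeeping of $p_k$ coefficients is delicate when $H$ has many nonedges, and edge cases such as $H$ being complete must be handled.

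\textbf{Step 3: exactness.} From Steps 1 and 2 we get stability: a near-extremal $G$ is $o(n^2)$-close to $T_r(n)$. We then take an extremal $G$ and run a standard cleaning argument. First delete low-degree vertices, showing there are none, because each vertex lies in about the average number of copies. Next, partition $V(G)$ into $r$ near-balanced parts, each of which is nearly independent. Show that every vertex has few neighbors inside its own part; otherwise $K_{r+1}$-freeness forces large missing edges elsewhere, which costs copies. Conclude that $G$ is complete $r$-partite, and then balanced by Step 2 applied in discrete form to integer part sizes. That last step is again the local convexity computation.
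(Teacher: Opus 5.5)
\textbf{Gap in Step~1.} This step is where the difficulty of the problem lies, and as sketched it fails. Zykov symmetrization is exact for edges and cliques for two reasons. First, $C_{uv}=0$ whenever $u\not\sim v$. Second, cloning a vertex of maximum degree onto its non-neighbours never lets a remaining non-neighbour overtake it. For a general $H$ neither holds. Each cloning step is justified only by the first-order comparison $B_x\ge B_y$. But every step shifts the values $B_\cdot$ of the other vertices by $O(n^{v(H)-2})$. After linearly many steps the comparison can be off by order $n^{v(H)-1}$ per step, so the ``negligible'' terms you discard add up to order $n^{v(H)}$.

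In fact the conclusion of Step~1 is false unless $r$ is large compared with $H$. Take $r=2$ and $H=C_5$. The balanced blow-up of $C_5$ is $K_3$-free and has $\Theta(n^5)$ copies of $C_5$, while every complete bipartite graph has none. Cloning one class of the blow-up onto its two non-adjacent classes, one vertex at a time, gives $K_{3n/5,2n/5}$. Initially all $B_\cdot$ are equal, so each step looks nearly free, yet all copies are destroyed. Since your Step~1 never uses $r\gg v(H)$, it cannot be right as sketched.

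Two further consequences. ``Steps 1 and 2 give stability'' does not follow: symmetrization with no guaranteed gain per step gives no control on how far it moves the graph. Step~2 also needs $r\gg v(H)$ quantitatively when comparing against weightings on fewer parts. The graphs $H_{r,s,\ell}$ of Theorem~\ref{thm:counterexamples} are cases where $T_{r-1}(n)$ beats $T_r(n)$.

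\textbf{How the paper handles this.} The paper obtains the statement from Theorem~\ref{thm:cubic}, since $300v(H)^9\ge 4v(H)^3+11v(H)^2$. It uses symmetrization only where a first-order gain is guaranteed, and asks much less of it. For an extremal $G$, it clones a vertex $v_0$ maximizing $I_G(v_0)$ onto a vertex $v$ with $d_G(v)\le(1-\eta)n$. By Lemmas~\ref{cub:lem:vertex-upper} and~\ref{cub:lem:turan-lower}, the gain $I_G(v_0)-I_G(v)$ is at least roughly $e(H)\bigl(2\eta-(v(H)-2)\eta^2-v(H)/r\bigr)n^{v(H)-1}$. This beats the second-order loss $v(H)^2n^{v(H)-2}$ once $2\eta-(v(H)-2)\eta^2>v(H)/r$, which is exactly where $r\gg v(H)$ enters. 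The conclusion is only that $G$ is $\eta$-dense, not that it is multipartite.

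The multipartite structure instead comes from F\"uredi's exact theorem, applied to a maximum $r$-cut $Q$:
\begin{itemize}
\item $e(T_r(n))-e(Q)\ge 2(e(G)-e(Q))$;
\item $Q$ is $\beta$-dense by Lemma~\ref{cub:lem:max-cut};
\item each edge of $G-Q$ lies in at most $2e(H)n^{v(H)-2}$ homomorphisms;
\item by Lemma~\ref{cub:lem:dense-rpartite}, each unit of the deficit $e(T_r(n))-e(Q)$ is worth at least $2e(H)(1-C\beta)n^{v(H)-2}$, with $C\beta<1/2$.
\end{itemize}
Together these give $\inj(H,G)\le\inj(H,T_r(n))$ directly for all large $n$, with no limit object, stability step or cleaning. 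Your Step~2 is then needed only in the near-uniform regime, where all parts have size at most $\beta n$.
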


Let $\inj(H,G)$ be the number of injective maps $\varphi:V(H)\to V(G)$ such that $\varphi(x)\varphi(y)\in E(G)$ when $xy\in E(H)$. 
We call each injective map a {\em homomorphism} from $H$ to $G$.
Every copy of $H$ gives exactly $|\mathrm{Aut}(H)|$ such maps, where $|\mathrm{Aut}(H)|$ is the order of automorphism group of $H$,
and hence $\inj(H,G)=|\mathrm{Aut}(H)|\N(H,G)$. 
It follows the fact that a graph $H$ is $K_{r+1}$-Tur\'an-good if and only if for every $n$-vertex $K_{r+1}$-free graph $G$, 
we have $\mathrm{inj}(H, G) \le \operatorname{inj}(H, T_r(n))$.

Morrison, Nir, Norin, Rzążewski, and Wesolek proved the following stronger theorem, which implies Theorem \ref{thm:Morrison2023}.

\begin{theorem}[Morrison, Nir, Norin, Rz\k{a}\.{z}ewski, and Wesolek~\cite{Morrison2023}]\label{thm:Morrison2023-stronger}
Let $H$ be a graph and $r\ge 300v(H)^9$. 
Then $\inj(H,G)\le\inj(H,T_r(n))$ for every $n$-vertex $K_{r+1}$-free graph $G$
\end{theorem}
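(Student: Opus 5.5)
The plan is to prove the statement by Zykov symmetrization combined with a local counting argument. This reduces the problem to complete multipartite graphs and then to comparing part sizes.

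\emph{Step 1: reduction to complete multipartite graphs.} Let $G$ be an $n$-vertex $K_{r+1}$-free graph maximizing $\inj(H,G)$. For a vertex $v$, write $\inj_v(H,G)$ for the number of injective homomorphisms whose image contains $v$. For nonadjacent $u,v$, replacing $u$ by a clone of $v$ (same neighbourhood, not adjacent to $v$) keeps the graph $K_{r+1}$-free. The change in $\inj(H,\cdot)$ is
\[
\inj_v-\inj_u \pm \Delta,
\]
where $\Delta$ counts homomorphisms using both $u$ and $v$. Naive symmetrization fails because of this correction term $\Delta$. The plan is to control $\Delta$ by $O(v(H)^2 n^{v(H)-2})$, while the main terms are of order $n^{v(H)-1}$. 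A weighted (Lagrangian-type) formulation sidesteps this. Consider the polynomial $p_G(x)=\sum_{\varphi}\prod_{h\in V(H)} x_{\varphi(h)}$ over homomorphisms (not necessarily injective) to $G$. This polynomial is multilinear in each pair of nonadjacent vertices' weights along the line $x_u+x_v=\text{const}$, so a maximizer can be pushed to a complete multipartite support. I would then relate $\inj$ to the homomorphism count through the standard inclusion--exclusion over partitions of $V(H)$, with error $O(v(H)^2/n)$ relative to the main term.

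\emph{Step 2: optimizing over complete multipartite graphs.} Once $G$ is complete $r$-partite with part sizes $a_1,\dots,a_r$, $\inj(H,G)$ counts injective maps sending adjacent vertices of $H$ to distinct parts. Thus
\[
\inj(H,G)=\sum_{c}\prod_{i}(a_i)_{|c^{-1}(i)|},
\]
summed over proper colourings $c:V(H)\to[r]$, with $(a)_k$ the falling factorial. For two parts with $a_1\ge a_2+2$, moving one vertex from part $1$ to part $2$ changes this sum. Group the colourings by their restriction to colours outside $\{1,2\}$. For $r$ large compared with $v(H)$, the dominant colourings use each colour at most once. For those, the contribution is symmetric in $a_1,a_2$ and of the form $\alpha(a_1+a_2)+\beta a_1a_2$ with $\beta>0$, so balancing strictly increases it. The colourings with repeated colours are lower-order by a factor of about $v(H)^2/r$. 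This is where the bound $r\ge 300v(H)^9$ enters: the rainbow term must beat the non-rainbow ones uniformly in the part sizes, including tiny parts.

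\emph{Main obstacle.} The hard step is the exactness in Step 1, namely making the symmetrization exact rather than asymptotic, so that the inequality holds for every $n$. I would handle it with a discrete cloning argument. Among the maximizers, choose $G$ with the maximum number of vertex classes of twins. If two nonadjacent non-twins $u,v$ exist, assume $\inj_u\le\inj_v$ and clone $v$ onto $u$. The overlap term counts injective maps using both $u$ and $v$ as non-adjacent images; this term appears identically in both configurations except for maps through edges incident to $u$. That leaves a residual that is bounded using the $K_{r+1}$-freeness and the degree lower bound forced by maximality. Specifically, every vertex must have degree at least $(1-1/r-O(v(H)^3/r^{2}))n$, else deleting and cloning a best vertex improves the count. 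This forces the near-regular structure in which the residual is dominated. The polynomial dependence $v(H)^9$ comes from iterating these error bounds.
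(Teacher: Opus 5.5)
Your Step~1 does not work, and everything else rests on it. The polynomial $p_G$ is not linear along $x_u+x_v=c$. A homomorphism may send two nonadjacent vertices of $H$ both to $u$, or one to $u$ and one to $v$, so monomials such as $x_u^2$ and $x_ux_v$ occur. The restriction is then a polynomial of degree up to $v(H)$.

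The claimed conclusion is in fact false, and your mechanism never uses that $r$ is large. For $H=G=C_5$, the uniform weighting gives $p_G=10/5^5>0$. But every complete multipartite induced subgraph of the triangle-free graph $C_5$ is bipartite, so it receives no homomorphism from $C_5$.

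Even a correct weighted statement could not give an exact inequality. The counts $\mathrm{hom}$ and $\inj$ differ by terms of order $n^{v(H)-1}$ that depend on $G$, whereas $\inj(H,T_r(n))-\inj(H,G)$ can be of order $n^{v(H)-2}$ (delete one edge of $T_r(n)$).

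Your discrete cloning has the same defect. Cloning $v$ onto a nonadjacent $u$ changes the count by exactly $I_G(v)-I_G(u)-J_G(u,v)+J_{G'}(u',v)$, where $J$ counts injective homomorphisms whose image contains both vertices. Each overlap term can be as large as $v(H)^2n^{v(H)-2}$, and nothing forces $J_{G'}(u',v)\ge J_G(u,v)$. So when $I_G(u)$ and $I_G(v)$ are close, the clone can lose, and choosing a maximizer with the most twin classes yields no contradiction.

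Similarly, compare $I_G(v)\le\Gamma_n\sum_{w\in V(H)}(1-x_v)^{d_H(w)}$ with the average of $I_G$, which for a maximizer is at least about $v(H)\Gamma_n(1-e(H)/r)$. This forces only a degree deficit of order $v(H)/r$ (the paper uses $4v(H)/r$). Your bound $(1-1/r-O(v(H)^3/r^2))n$ is unsupported.

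Step~2 is also false as stated, \emph{uniformly in the part sizes}. For $H=K_{1,t}$, the two parts contribute $g(a_1)+g(a_2)$ with $g(a)=a(n-a)_t$, which is strictly convex once $a$ exceeds about $2n/(t+1)$. So for $t=4$ and parts of sizes about $0.452n$ and $0.448n$, moving a vertex from the larger part to the smaller part decreases $\inj(H,\cdot)$, however large $r$ is.

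The accounting is off as well. Take a nonadjacent pair $\{h,h'\}$ of $H$, with all other vertices of $H$ placed in the remaining parts. The four placements of $h,h'$ in parts $1,2$ contribute $(a_1+a_2)_2$ in total, which a move does not change. So the rainbow gain $2(a_1-a_2-1)$ of such a pair is cancelled exactly by the placements with a repeated colour; these are not smaller by a factor $v(H)^2/r$. Only edges of $H$ produce a net gain. The terms with at least three vertices in the two parts are negligible only when the parts have size at most $\beta n$ with $\beta\ll v(H)^{-2}$; this is Lemma~\ref{cub:lem:transfer}.

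The paper obtains the statement from Theorem~\ref{thm:alln}, since $300v(H)^9\ge40v(H)^3$, and never reduces to complete multipartite graphs:
\begin{itemize}
\item It symmetrizes only a vertex of low degree. There $I_G(v_0)-I_G(v)$ is of order $v(H)n^{v(H)-1}/r$ and beats the overlap $v(H)^2n^{v(H)-2}$, which makes the maximizer $(4v(H)/r)$-dense.
\item A maximum $r$-partite subgraph $Q$ is then dense by Lemma~\ref{cub:lem:max-cut}, and F\"uredi's Theorem~\ref{cub:thm:furedi} gives $e(G)-e(Q)\le e(T_r(n))-e(G)$.
\item Lemma~\ref{cub:lem:dense-rpartite} gives a gain of at least $2e(H)(1-C\beta)n^{v(H)-2}$ per unit of $e(T_r(n))-e(Q)$. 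Deleting $E(G)\setminus E(Q)$ costs at most $2e(H)n^{v(H)-2}$ per edge.
\item Since $C\beta<1/2$, these combine into the exact inequality for every $n\ge r+1$; the case $n\le r$ is trivial.
\end{itemize}
This F\"uredi/max-cut comparison is the idea your plan is missing.
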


The {\em edit distance} of two $n$-vertex graphs $G$ and $G'$, denoted by $\ed(G,G')$, is the minimum number of edge additions and deletions needed to make them isomorphic. 
%We say that a graph $H$ is {\em $K_{r+1}$-Tur\'an-stable} if every $K_{r+1}$-free graph $G$ on $n$ vertices with 
%$\mathrm{ex}(n,H,K_{r+1})- o(n^{v(H)})$ copies of $H$ has edit distance $o(n^2)$ from the Tur\'an graph.
%We say that $H$ is {\em $K_{r+1}$-Tur\'an-stable} if, for every $\varepsilon>0$, there are $\delta>0$ and $n_0$ such that 
%every $n\ge n_0$ and every $n$-vertex $K_{r+1}$-free graph $G$ satisfying $\N(H,G)\ge\N(H,T_r(n))-\delta n^{v(H)}$ also satisfies $\ed(G,T_r(n))\le\varepsilon n^2$.
We say that $H$ is $K_{r+1}$-Tur\'an-stable if, for every $\varepsilon>0$, 
there exist $\delta>0$ and $n_0$ such that, whenever $n\ge n_0$ and an $n$-vertex $K_{r+1}$-free 
graph $G$ contains at least $\mathrm{ex}(n,H,K_{r+1})-\delta n^{v(H)}$ copies of $H$, 
$\ed(G,T_r(n))\le\varepsilon n^2$.

Using these notions, the classical Erd\H{o}s--Simonovits stability theorem~\cite{Erdos1966,Erdos1968,Simonovits1974} implies that $K_2$ is $K_{r+1}$-Tur\'an-stable for every $r\ge 2$. 
The first stability result for generalized Tur\'an problems was obtained by Ma and Qiu~\cite{MaQiu2018}, 
who proved that $K_k$ is $K_{r+1}$-Tur\'an-stable for every $r\ge k$. 
The general notion of a Tur\'an-stable graph was introduced by Gerbner~\cite{Gerbner}.

Morrison, Nir, Norin, Rz\k{a}\.{z}ewski, and Wesolek asked the following question.

\begin{question}[Morrison, Nir, Norin, Rz\k{a}\.{z}ewski, and Wesolek~\cite{Morrison2023}]\label{q:stability}
Fix a graph $H$ and let $r$ be large enough that $H$ is $K_{r+1}$-Tur\'an-good. Does it follow that $H$ is $K_{r+1}$-Tur\'an-stable?
\end{question}

Gerbner and Hama Karim~\cite{GerbnerKarim} proved Tur\'an-stability under the same condition. 
Thus they gave an affirmative answer to Question~\ref{q:stability} in this range.

\begin{theorem}[Gerbner and Hama Karim~\cite{GerbnerKarim}]\label{thm:GerbnerKarim}
Let $H$ be a graph and $r\ge 300v(H)^9$. Then $H$ is $K_{r+1}$-Tur\'an-stable.
\end{theorem}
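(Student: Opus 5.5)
We prove Theorem~\ref{thm:GerbnerKarim} by combining the exactness statement in Theorem~\ref{thm:Morrison2023-stronger} with a limiting (compactness) argument and a local symmetrization step. Throughout we work with the normalized density $t(H,G)=\inj(H,G)/\fall{n}{v(H)}$, where $\fall{n}{k}=n(n-1)\cdots(n-k+1)$. By Theorem~\ref{thm:Morrison2023-stronger}, $\mathrm{ex}(n,H,K_{r+1})=\N(H,T_r(n))$ for $r\ge 300v(H)^9$. It therefore suffices to show the following: if $G$ is $K_{r+1}$-free with $\inj(H,G)\ge \inj(H,T_r(n))-\delta n^{v(H)}$, then $G$ is $\varepsilon n^2$-close to $T_r(n)$.

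\textbf{Step 1: reduction to graphons.} Suppose the statement fails. Then there is a sequence of $K_{r+1}$-free graphs $G_n$ with $t(H,G_n)\to \max t(H,\cdot)$ over $K_{r+1}$-free graphs, while each $G_n$ stays $\varepsilon n^2$-far from $T_r(n)$. By graph-limit compactness, a subsequence converges to a graphon $W$ with $t(K_{r+1},W)=0$ and $t(H,W)$ maximal. The aim is to show that $W$ must be the balanced $r$-partite graphon. Once this is established, the induced removal lemma together with standard arguments converts cut-distance closeness of a $K_{r+1}$-free graph to $T_r$ into edit-distance closeness. This is the step in which we use that $G_n$ is exactly $K_{r+1}$-free and not merely nearly so; it gives the desired contradiction.

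\textbf{Step 2: structure of the extremal graphon.} We use the symmetrization from the proof of Theorem~\ref{thm:Morrison2023-stronger}, the Zykov-type cloning argument, in which a vertex is replaced by a copy of a vertex of larger $H$-degree. Applied to an optimal $W$, this argument shows that almost every vertex of $W$ has the same $H$-degree, and that nonadjacent vertices can be cloned onto each other without decreasing $t(H,W)$. Iterating the argument gives an optimal graphon that is complete multipartite with at most $r$ parts. Among complete multipartite graphons with at most $r$ parts, a convexity or Lagrangian computation shows that the balanced $r$-partite graphon is the unique maximizer.

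\textbf{Main obstacle.} The hard part is showing that \emph{every} optimal $W$ already has this structure, not just that some optimal $W$ can be reached by symmetrization. Symmetrization preserves the value of $t(H,\cdot)$, so it could in principle pass through optimizers of a different shape. To rule this out, I would sharpen the counting inequality in the proof of Theorem~\ref{thm:Morrison2023-stronger} to a strict inequality with a quantitative loss. Concretely, I would show that whenever $W$ contains a non-edge pair whose neighbourhoods differ on a set of positive measure, or whose part sizes are unbalanced, the symmetrization step strictly increases $t(H,W)$ by an amount proportional to that discrepancy. The large value of $r$ is used here: the dominant term of $t(H,\cdot)$ comes from embeddings of $H$ into distinct parts, and the hypothesis $r\ge 300v(H)^9$ lets this term dominate the error terms from embeddings in which two vertices of $H$ share a part. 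A strict local improvement then forces uniqueness of the optimizer, which completes Step~2 and hence the proof.
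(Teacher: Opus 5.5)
\textbf{There is a genuine gap: Step~2 is the whole content of the theorem and is not proved, and two of its claims are false as stated.}

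Step~1 is sound, and lighter than you make it. Once the limit is the balanced $r$-partite graphon, the edge density of $G_n$ tends to $1-1/r$, and Erd\H{o}s--Simonovits stability for $K_{r+1}$-free graphs already gives edit distance $o(n^2)$. Your first observation in Step~2 is also correct: almost every point of an optimal $W$ has the same rooted $H$-density. Everything after that fails.
\begin{itemize}
\item Iterated cloning does not produce a complete multipartite optimizer for general $H$. Zykov's argument is exact for clique counts because two non-adjacent host vertices never lie in a common copy. For general $H$ they can (as images of non-adjacent vertices of $H$), so cloning sets of positive measure changes $t(H,\cdot)$ at second order, in an uncontrolled direction. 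Your Step~2 never uses $r\ge300v(H)^9$, and it fails for $H=C_5$, $r=2$: the triangle-free optimum there is the balanced blow-up of $C_5$, while every bipartite graphon has no $C_5$ at all.
\item The balanced $r$-partite graphon is not in general the maximizer among complete multipartite graphons with at most $r$ parts. The graphs $H_{r,s,\ell}$ of Theorem~\ref{thm:counterexamples} satisfy $\inj(H,T_{r-1}(n))>\inj(H,T_r(n))$. Rebalancing (Lemmas~\ref{cub:lem:transfer} and~\ref{cub:lem:dense-rpartite}) works only when every part has size at most $\beta n$ with $C\beta<1$, where $C=\Theta(v(H)^2)$. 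Your ``dominant term'' heuristic is essentially the $|S|\le2$ versus $|S|\ge3$ split in Lemma~\ref{cub:lem:transfer}, but it needs the parts to be small first.
\item Your ``Main obstacle'' paragraph only announces a strict inequality with quantitative loss. It does not say which quantity measures the loss. A measure based on non-adjacent pairs with different neighbourhoods cannot detect edges inside the parts at all.
\end{itemize}

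The missing idea is the paper's quantitative deficit inequality. The paper gets your statement as a special case of Theorem~\ref{thm:cubic}, since $300v(H)^9\ge4v(H)^3+11v(H)^2$, and uses no limits.
\begin{itemize}
\item Each symmetrization of a vertex of degree at most $(1-\eta)n$ gains at least $\frac{e(H)\sigma}{4}n^{v(H)-1}$ (Lemmas~\ref{cub:lem:vertex-upper} and~\ref{cub:lem:turan-lower}). By Tur\'an-goodness, a near-extremal $G$ therefore becomes $\eta$-dense after $O(\delta n)$ steps, i.e.\ $O(\delta n^2)$ edits.
\item For the resulting $G^*$, a maximum $r$-cut $Q$ is $\beta$-dense (Lemma~\ref{cub:lem:max-cut}). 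F\"uredi's theorem gives $e(G^*)-e(Q)\le\frac12\bigl(e(T_r(n))-e(Q)\bigr)$.
\item Lemma~\ref{cub:lem:dense-rpartite} then yields $\inj(H,T_r(n))-\inj(H,G^*)\ge 2e(H)\lambda\bigl(e(T_r(n))-e(Q)\bigr)n^{v(H)-2}$ with $\lambda=\frac12-C\beta>0$.
\item Lemma~\ref{cub:lem:edit-deficit} bounds $\ed(G^*,T_r(n))$ by $\frac32\bigl(e(T_r(n))-e(Q)\bigr)+n\sqrt{r\bigl(e(T_r(n))-e(Q)\bigr)+r^2/4}$, which closes the argument.
\end{itemize}
So the loss is measured by the edge deficit of a maximum $r$-cut, not by discrepant non-adjacent pairs. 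If you want to keep the graphon framework, your a.e.-constant rooted density does give the minimum-degree condition for every optimizer directly: combine it with the graphon form of Lemma~\ref{cub:lem:vertex-upper} and $t(H,W)\ge1-e(H)/r$. But to prove uniqueness of the optimizer you would still have to port F\"uredi's theorem and the rebalancing lemma to the limit, which is the same argument plus extra machinery.
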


Chen and Liu \cite{Chenliu2024} later developed a stronger degree-stability and extended it to a hypergraph framework.
In \cite{Lidicky2021} it is conjectured that Theorem \ref{thm:Morrison2023} should hold with $r_0(H) = v(H) +1$. 
Motivated by the above results, we first prove the following theorems.

\begin{theorem}\label{thm:cubic}
Let $H$ be a graph and $r\ge4v(H)^3+11v(H)^2$. Then $H$ is $K_{r+1}$-Tur\'an-good and $K_{r+1}$-Tur\'an-stable.
\end{theorem}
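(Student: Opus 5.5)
We follow the standard route from the Morrison--Nir--Norin--Rz\k{a}\.{z}ewski--Wesolek argument: first an asymptotic (limit-object) result, then a stability step, and finally an exact cleaning step. Throughout write $h=v(H)$, and work with $\inj(H,\cdot)$ rather than $\N(H,\cdot)$.

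\textbf{Step 1: the asymptotic statement on complete multipartite graphs.} By Zykov symmetrization, for counting injective homomorphisms of a fixed $H$ we may restrict attention to complete multipartite $K_{r+1}$-free graphs, i.e.\ complete $r$-partite graphs with part proportions $x_1,\dots,x_r\ge0$ summing to $1$. We do not symmetrize $G$ directly, because $\inj(H,\cdot)$ is not a linear function of the vertex neighbourhoods. Instead we pass to the graphon/Lagrangian setting, where the count becomes the polynomial
\[
p_H(x)=\sum_{\sigma}\prod_{v\in V(H)} x_{\sigma(v)},
\]
summed over proper colourings $\sigma:V(H)\to[r]$. We then show that $p_H$ is uniquely maximized on the simplex at the balanced point $x_i=1/r$ once $r\ge 4h^3+11h^2$.

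To see this, write $p_H$ via inclusion--exclusion over partitions of $V(H)$ into independent sets, as $\sum_{P} c_P\prod_{B\in P}s_{|B|}(x)$, where $s_k=\sum_i x_i^k$. The dominant term is $s_1^h=1$. The next terms involve $s_2$ with a negative coefficient equal to $-e(\overline{\,\cdot\,})$-type counts, namely the number of non-edges of $H$ compared with the edges. More precisely, the leading deviation from balance is controlled by $-e(H)\cdot s_2$ plus contributions from higher power sums bounded by $h^{O(1)}s_2^{3/2}$ or $s_2\cdot h^3/r$.

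Writing $x_i=1/r+y_i$ with $\sum y_i=0$ and $q=\sum y_i^2$, we will show
\[
p_H(x)\le p_H(\mathbf 1/r)-c\,q
\]
with $c>0$ whenever $r$ exceeds a cubic in $h$. The cubic threshold should come exactly from comparing the quadratic coefficient, of order $e(H)\ge$ something, against error terms of size $h^3/r$ times it. Isolated vertices of $H$ are handled separately: they factor out as falling factorials, which is harmless.

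\textbf{Step 2: stability.} Using the removal lemma and the quantitative strict maximality $p_H(x)\le p_H(\mathbf 1/r)-cq$, we argue as follows. A $K_{r+1}$-free $G$ with near-extremal $H$-count is, after Zykov-type steps that do not decrease the count by more than $o(n^h)$, close to a complete $r$-partite graph with near-balanced parts. This step needs a degree argument: we delete vertices of low $H$-degree (low count of copies of $H$ through them) and then symmetrize clones. Since every step costs only $o(n^h)$ copies, the resulting multipartite graph has $q=o(1)$ and lies within $o(n^2)$ edits of $G$. This gives Tur\'an-stability.

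\textbf{Step 3: exactness.} Take an extremal $G$. By stability it has a partition $V_1,\dots,V_r$ with $o(n^2)$ bad pairs. Using extremality, every vertex has $H$-degree at least $\inj$-degree in $T_r(n)$ minus $O(n^{h-2})$; this forces minimum degree about $(1-1/r)n$. Each vertex then has $o(n)$ neighbours in its own part, and since $G$ is $K_{r+1}$-free, there are in fact no edges inside parts. Hence $G$ is $r$-partite, so $G\subseteq$ complete $r$-partite, and the balanced complete $r$-partite graph maximizes among those by Step 1 together with a discrete local-move argument (moving a vertex from a larger to a smaller part increases $\inj$).

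The main obstacle is Step 1: obtaining a clean strict-concavity bound at the balanced point with only an $O(h^3)$ threshold. I would try first to bound all non-$s_2$ terms by $h^3 q/r$ times the $s_2$ coefficient, using $|y_i|\le 1$ and $s_k\le s_2$.
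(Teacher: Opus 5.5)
Your outline has a genuine gap at its core: the reduction from an arbitrary $K_{r+1}$-free graph to a complete $r$-partite one. You assert it in Step~1 (``by Zykov symmetrization'') and use it again in Step~2 (``symmetrize clones'').

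\emph{Why the reduction fails.} As you note yourself, $\inj(H,\cdot)$ is not linear in neighbourhoods, and moving to graphons does not help. The polynomial $p_H$ is the $H$-density of complete $r$-partite graphons only, so working with $p_H$ already presupposes the reduction. No such reduction holds in general. For $r=2$, the balanced blow-up of $C_5$ is triangle-free and contains $\Theta(n^5)$ copies of $C_5$, while every complete bipartite graph contains none. So this step must use $r\ge4v(H)^3+11v(H)^2$, and your plan never says how.

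\emph{A further problem in Step~2.} You do not bound the number of cloning steps, and each step can change $n$ adjacencies. Nothing therefore gives the $o(n^2)$ edit bound between $G$ and the final multipartite graph that stability requires.

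\emph{The missing ingredients.} The paper symmetrizes only vertices with $d_G(v)\le(1-\eta)n$, onto a vertex $v_0$ maximizing $I_G$.
\begin{itemize}
\item Lemma~\ref{cub:lem:vertex-upper} gives $I_G(v)\le\Gamma_n\bigl(v(H)-\Psi(x_v)\bigr)$ with $\Psi(x)\ge e(H)\bigl(2x-(v(H)-2)x^2\bigr)$.
\item Lemma~\ref{cub:lem:turan-lower} gives, by averaging, $I_G(v_0)\ge v(H)\Gamma_n\bigl(1-\frac{e(H)n}{r(n-1)}\bigr)$ up to the $\delta$-term.
\item Hence each step gains order $n^{v(H)-1}$, beating the $v(H)^2n^{v(H)-2}$ loss in \eqref{cub:eq:symmetrization-gain}. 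An extremal graph is therefore $\eta$-dense with a degree deficit of order $v(H)/r$. In the stability setting there are only $O(\delta_0 n)$ steps, i.e.\ $O(\delta_0n^2)$ edits.
\end{itemize}
The graph is then never made complete multipartite. Take a maximum $r$-cut $Q$, which is $\beta$-dense by Lemma~\ref{cub:lem:max-cut}. F\"uredi's theorem gives $e(G)-e(Q)\le\frac12\bigl(e(T_r(n))-e(Q)\bigr)$. Each edge of $G-Q$ carries at most $2e(H)n^{v(H)-2}$ homomorphisms. By Lemma~\ref{cub:lem:dense-rpartite}, each unit of edge deficit of a $\beta$-dense $r$-partite graph costs at least $2e(H)(1-C\beta)n^{v(H)-2}$. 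Since $C\beta<1/2$, goodness follows at once, and the same inequality together with Lemma~\ref{cub:lem:edit-deficit} gives stability.

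\emph{Step~1 would also fail as planned.} The expansion $p_H=\sum_{F\subseteq E(H)}(-1)^{|F|}\prod_K s_{|K|}$ has $2^{e(H)}$ signed terms that cancel heavily away from the balanced point. At $x=(1,0,\dots,0)$, where $q\approx1$, every $s_k=1$. There the pair terms alone exceed their balanced values by $\binom{e(H)}{2}(1-r^{-2})$, against a linear gain of only $e(H)(1-r^{-1})$; only the full alternating sum brings $p_H$ down to $0$. So bounding the non-$s_2$ terms one at a time by $v(H)^3q/r$ times the $s_2$ coefficient cannot give $p_H(x)\le p_H(\mathbf 1/r)-cq$ on the whole simplex. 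Unbalanced proportions, with parts of linear size, must first be excluded by the density step. Within the dense regime the paper rebalances combinatorially (Lemma~\ref{cub:lem:transfer}, via a swap injection between $A'$ and $B$). The cubic threshold comes from needing $\eta+1/r<\beta<1/(2C)$ with $C=4v(H)^2+2v(H)-4$ and $\eta$ of order $v(H)/r$.

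\emph{Step~3 is unsupported.} $H$-degrees only yield $d_G(v)\ge(1-O(v(H)/r))n$, not $(1-1/r-o(1))n$. That allows a vertex to miss entire parts, so the ``no edges inside parts'' cleaning does not follow. Your claim that moving a vertex to a smaller part increases $\inj$ also needs the density hypothesis of Lemma~\ref{cub:lem:transfer}. The paper needs no exactness-from-stability step, because it proves goodness directly.
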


\begin{theorem}\label{thm:alln}
Let $H$ be a graph and $r\ge40v(H)^3$. Then $\inj(H,G)\le\inj(H,T_r(n))$ for every positive integer $n$ and every $n$-vertex $K_{r+1}$-free graph $G$.
\end{theorem}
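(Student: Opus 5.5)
We argue by induction on $n$ combined with a symmetrization (Zykov-type) argument, in the spirit of the approach of Morrison, Nir, Norin, Rz\k{a}\.{z}ewski, and Wesolek, but with sharper counting to get the cubic bound. Write $k=v(H)$. Fix $r\ge 40k^3$ and let $G$ be an $n$-vertex $K_{r+1}$-free graph maximizing $\inj(H,G)$. For $n\le r$ the statement is immediate: $T_r(n)=K_n$, so $\inj(H,T_r(n))$ is the number of injective maps $V(H)\to V(G)$, which is an upper bound for every $G$. For $n>r$ we first reduce to complete multipartite $G$. For nonadjacent $u,v$, $\inj(H,G)$ is a sum of terms involving exactly one of $u,v$, terms involving neither, and terms involving both. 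Cloning $u$ onto $v$ or $v$ onto $u$ changes the first two contributions linearly, so one of the two clonings does not decrease them. The remaining difficulty is the terms using both $u$ and $v$ (maps with $u,v$ both in the image). These number $O(k^2 n^{k-2})$ and are small compared with the gain when $\inj(H-x,G-u)$ and $\inj(H-x,G-v)$ differ by order $n^{k-2}\cdot n/r$. I expect this asymmetry to be handled by a careful choice of the pair (e.g.\ $u,v$ maximizing the one-vertex count).

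Once $G$ is complete $r$-partite with parts $V_1,\dots,V_r$ (allowing empty parts), $\inj(H,G)$ becomes a polynomial in the part sizes $a_1,\dots,a_r$:
\[
\inj(H,G)=\sum_{\phi}\prod_{i=1}^{r}\fall{a_i}{|\phi^{-1}(i)|},
\]
where $\phi$ ranges over proper colorings $V(H)\to[r]$. The key step is to show that moving a vertex from a largest part $V_1$ to a smallest part $V_2$ does not decrease this sum whenever $a_1\ge a_2+2$. Write $a_1=a+1$ and $a_2=b$ with $a>b$, and compare the sums grouped by $(\phi^{-1}(1),\phi^{-1}(2))=(S,T)$ together with the swapped pair $(T,S)$. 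The dominant contribution comes from colorings where $|S|,|T|\le1$, because $r$ is large relative to $k$. Expanding $\fall{a+1}{s}\fall{b}{t}+\fall{a+1}{t}\fall{b}{s}$ against the moved version, one gets a gain from pairs with $|S|=|T|=1$ of order $k^2 (a-b)\cdot N/r^2$, where $N$ is the weight of colorings of $H-S-T$ into the other $r-2$ parts. Colorings with $|S|\ge2$ may lose, but their weight is bounded by roughly $k^2 a/(r\cdot\text{typical part})$ times the main term. Since parts have size about $n/r$, the loss is a factor of order $k^3/r$ smaller, which is where $r\ge 40k^3$ enters. One also needs to handle unbalanced parts, where $a_1$ may be much larger than $n/r$. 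Here I would first show that part sizes are roughly balanced, using the cruder polynomial comparison and the fact that a part of size much larger than $n/r$ loses a factor in the single-vertex terms.

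The main obstacle is making the vertex-moving inequality uniform in $n$ and valid for small part sizes, not just asymptotically. Small parts mean falling factorials $\fall{b}{t}$ vanish, so the terms with $|S|\ge 2$ must be bounded by exact inequalities such as $\fall{a+1}{s}-\fall{a}{s}=s\fall{a}{s-1}$. The negative terms must then be charged to the positive $|S|=|T|=1$ terms coloring by coloring. I would do this by an explicit injection or averaging that maps each coloring with a class of size $s\ge2$ in part $1$ to $\binom{s}{2}$-many colorings that recolor one vertex into one of the $r-2$ other parts. Among these, at least $r-2-k$ parts avoid conflicts, which yields the needed $k^3/r$ comparison with constant $40$.
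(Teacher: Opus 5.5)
Your first step, reducing to a complete multipartite maximizer by Zykov symmetrization, is where the proof breaks, and the gap is substantive. For nonadjacent $u,v$ let $B_G(u,v)$ be the number of injective homomorphisms whose image contains both. If $G'$ replaces $v$ by a clone $u'$ of $u$, then $\inj(H,G')-\inj(H,G)=I_G(u)-I_G(v)-B_G(u,v)+B_{G'}(u,u')$. Cloning the other way gives the symmetric expression. Adding the two, the one-vertex terms cancel, leaving $B_{G'}(u,u')+B_{G''}(v,v')-2B_G(u,v)$, which has no sign. For $H=K_k$ all these terms vanish, which is why Zykov's argument works for cliques. For general $H$ they can be of order $k^2n^{k-2}$, and they are dominated only when $|I_G(u)-I_G(v)|$ is much larger than that. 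Nothing forces a non-complete-multipartite graph to contain a nonadjacent pair with such an asymmetry; in a vertex-transitive $G$ all $I_G(v)$ coincide. So ``a careful choice of the pair'' is a hope rather than an argument. Indeed, for small $r$ the maximizers genuinely need not be complete multipartite: for $H=C_5$ and $K_3$-free hosts, the blow-up of $C_5$ beats every complete bipartite graph, which contains no $C_5$ at all. Any such reduction would therefore have to use $r\ge 40k^3$ in an essential way that your sketch does not supply.

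The paper never makes $G$ complete multipartite. It symmetrizes only a vertex $v$ with $d_G(v)\le(1-4k/r)n$ onto a vertex $v_0$ maximizing $I_G$, where the asymmetry is forced. Lemma~\ref{cub:lem:vertex-upper} gives $I_G(v)\le\Gamma_n\sum_{u\in V(H)}(1-x_v)^{d_H(u)}$, while averaging against Lemma~\ref{cub:lem:turan-lower} gives $I_G(v_0)\ge k\Gamma_n\bigl(1-e(H)n/(r(n-1))\bigr)$. Hence $I_G(v_0)-I_G(v)$ is at least about $5k\,e(H)n^{k-1}/r$, which beats the $k^2n^{k-2}$ error since $n>r$. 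This only shows that a maximizer is $(4k/r)$-dense. The rest runs through a maximum $r$-cut $Q$, which is $\frac{4k+1}{r}$-dense by Lemma~\ref{cub:lem:max-cut}, together with F\"uredi's bound $e(T_r(n))-e(Q)\ge2(e(G)-e(Q))$. Each deleted edge of $G-Q$ costs at most $2e(H)n^{k-2}$ homomorphisms, and each unit of deficit of $Q$ gains at least $2e(H)(1-C\beta)n^{k-2}\ge e(H)n^{k-2}$ by Lemma~\ref{cub:lem:dense-rpartite}, using $C(4k+1)/r<1/2$.

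Your second stage also has a miscount. For nonadjacent $x,y$, the groupings $(\{x,y\},\emptyset)$ and $(\emptyset,\{x,y\})$ lose exactly $2(a-b)N$, which cancels the gain from $(\{x\},\{y\})$ and $(\{y\},\{x\})$. So the $|S|=2$ terms are not a factor $k^3/r$ smaller; the net gain is $2(a-b)N$ per \emph{edge} of $H$ (this is $\Delta(S)=0$ for independent $S$ in Lemma~\ref{cub:lem:transfer}). Only the terms with $|S\cup T|\ge3$ are genuinely lower order, and that requires every part to have size $O(kn/r)$. This is exactly the density that the low-degree symmetrization supplies and that your ``cruder polynomial comparison'' leaves unproved.
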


Thus Theorem~\ref{thm:cubic} not only strengthens Theorems~\ref{thm:Morrison2023} and~\ref{thm:GerbnerKarim}, 
but also provides a strengthened affirmative answer to Question~\ref{q:stability} in the general regime 
$r \ge 4v(H)^3 + 11v(H)^2$. 
Moreover, Theorem~\ref{thm:alln} establishes an exact inequality for every order $n$ under the condition 
$r \ge 40v(H)^3$, and in this range it strengthens Theorem~\ref{thm:Morrison2023-stronger}.

Apart from trying to lower the general threshold above, another interesting problem is to study whether Tur\'an-goodness is monotone in the forbidden clique. 
Morrison, Nir, Norin, Rz\k{a}\.{z}ewski, and Wesolek also asked the following question.

\begin{question}[Morrison, Nir, Norin, Rz\k{a}\.{z}ewski, and Wesolek~\cite{Morrison2023}]\label{q:monotonicity}
If $H$ is $K_r$-Tur\'an-good, must $H$ be $K_{r+1}$-Tur\'an-good?
\end{question}

Fix $r\ge3$ and let $A_1,\ldots,A_{r-1}$ be disjoint sets. For $s\ge4$ and $0\le\ell\le s$, let $H_{r,s,\ell}$ be obtained from the complete $(r-1)$-partite graph with $|A_i|=s$ by deleting a matching of size $\ell$ between $A_1$ and $A_2$. Our next theorem gives a negative answer to Question~\ref{q:monotonicity} for every $r\ge3$.

\begin{theorem}\label{thm:counterexamples}
Let $r\ge3$ and $s\ge12\lceil\log_2 r\rceil$. Then every $H_{r,s,\ell}$ with $0\le\ell\le s$ is $K_r$-Tur\'an-good but is not $K_{r+1}$-Tur\'an-good.
\end{theorem}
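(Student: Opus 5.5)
We have to prove two things about $H=H_{r,s,\ell}$: it is $K_r$-Tur\'an-good, and it is not $K_{r+1}$-Tur\'an-good.

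\textbf{Not $K_{r+1}$-good.} The plan is to exhibit an $n$-vertex $K_{r+1}$-free graph with more copies of $H$ than $T_r(n)$. Write $v=v(H)=(r-1)s$. In $T_r(n)$ we embed $H$ by choosing which $r-1$ of the $r$ parts host $A_1,\dots,A_{r-1}$. There is additional slack: $A_1\cup A_2$ spans a graph missing only $\ell$ edges, so it can spread over three parts only if it is $3$-colourable compatibly. It is not, since it contains $K_{s,s}$ minus a matching and therefore needs exactly two colour classes. Consequently $\inj(H,T_r(n))$ is essentially $r\,(n/r)^{v}$ times a combinatorial factor counting assignments of the colour classes to parts.

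Now compare with the graph $G=T_{r-1}(n)$ plus extra structure, or better with the complete $(r-1)$-partite graph having optimally unbalanced parts. Every $K_r$-free graph is $K_{r+1}$-free, and $H$ uses only $r-1$ classes. The count of $H$ in the balanced $(r-1)$-partite graph is about $(r-1)!\,(n/(r-1))^{v}$ times the same symmetry factor. In the $r$-partite graph it is $r!/1!\,(n/r)^{v}$ times that factor. The ratio is
\[
\frac{(r-1)!}{r!}\Big(\frac{r}{r-1}\Big)^{(r-1)s}=\frac1r\Big(1+\frac1{r-1}\Big)^{(r-1)s}\ge \frac{2^{s}}{r},
\]
which exceeds $1$ once $s>\log_2 r$. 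So $T_{r-1}(n)$ beats $T_r(n)$, giving non-goodness.

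I must check carefully that in $T_r(n)$ no copies arise with a class $A_i$ split across several parts. Such copies are possible, since each $A_i$ is independent and may be split. These extra embeddings have to be bounded. An embedding is a proper colouring of $H$ by $r$ colours, and I would count these. Since $H$ minus a matching is nearly complete $(r-1)$-partite with large classes, the number of proper $r$-colourings is at most (number of colourings with classes respected)$\,\times$ a factor like $(1+\text{small})$. Each vertex of $A_i$ can use a spare colour only if its neighbourhood misses it, which gives roughly $2^{s}$-type factors per class. This is the delicate part, and it is why $s\ge 12\lceil\log_2 r\rceil$ is needed. I would bound the weighted colouring count by $r\cdot\big((r-1)!\big)\cdot c^{(r-1)}\cdots$ and compare with the $2^{s}$ gain.

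\textbf{$K_r$-goodness.} Here $\chi(H)=r-1$, since $A_1,A_2$ remain bipartite-coloured. I would use a standard route for complete-multipartite-like graphs:
\begin{enumerate}
\item Apply the Zykov symmetrization argument: $\inj(H,\cdot)$ is a polynomial in which, for nonadjacent vertices $x,y$, cloning one onto the other does not decrease the count.
\item This reduces to complete $(r-1)$-partite graphs. Here the twin-class argument needs care, because $H$ is not complete multipartite; the Gerbner--Palmer technique for complete multipartite $H$ handles the main term, and the $\ell$ deleted edges only add lower-order terms.
\item Optimise the part sizes. The count is a Schur-concave-type function, so the balanced sizes $T_{r-1}(n)$ are optimal.
\end{enumerate}
The main obstacle is the exact symmetrization for non-multipartite $H$. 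I would first try the stability/progressive-induction method: show that near-extremal graphs are close to $T_{r-1}(n)$ via a limit (graphon) argument, then clean up.
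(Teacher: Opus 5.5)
\textbf{Not $K_{r+1}$-good.} Both halves of your proposal have genuine gaps. Your strategy here, comparing $T_{r-1}(n)$ with $T_r(n)$ through normalized proper-colouring counts, is the paper's. But the proof lives in the count of proper $r$-colourings, and what you say about it is wrong. First, $A_1\cup A_2$ \emph{can} spread over three parts, since any bipartite graph is $3$-colourable. Second, the correction factor is not $1+\text{small}$. The exact count is
\[
P_H(r)=(r-1)!\Bigl[r(\ell+1)+\tbinom r2(2^s-2)\Bigr],
\]
made up of three kinds of colourings:
\begin{itemize}
\item $r!$ class-respecting colourings;
\item $\binom r2(r-1)!(2^s-2)$ colourings in which exactly one class $A_i$ is split between two colours;
\item $\ell\, r\,(r-1)!$ colourings in which one deleted pair $x_iy_i$ is monochromatic, a case you never mention.
\end{itemize}
At most one class is split and at most one deleted pair is monochromatic, because removing the endpoints of one or two deleted pairs still leaves a $K_{r-1}$ (this uses $s\ge 4$). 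The split-class term exceeds your main term $r!$ by a factor $\frac{r-1}{2}(2^s-2)$. So your gain $\frac1r(\frac r{r-1})^{(r-1)s}\ge 2^s/r$ is useless. What must be shown is $(\frac r{r-1})^{(r-1)s}>r(\ell+1)+\binom r2(2^s-2)$, and the base-$2$ bound $(1+\frac1{r-1})^{r-1}\ge 2$ can never beat $\binom r2 2^s$. The missing step is $(\frac r{r-1})^{r-1}\ge 9/4$ for $r\ge 3$ (binomial theorem). This gives $(\frac r{r-1})^{(r-1)s}\ge 2^s(9/8)^s>r^2 2^s$, since $(9/8)^6>2$ and $s\ge 12\lceil\log_2 r\rceil$. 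This is exactly where the hypothesis on $s$ enters.

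\textbf{$K_r$-goodness.} Your step 1 is not valid. Zykov cloning is monotone for cliques only because no clique contains two nonadjacent vertices $x,y$. Your $H$ has many nonadjacent pairs, so copies through both $x$ and $y$ form a cross term that cloning does not control and can lose. This is precisely why the paper's own symmetrization in Section~3 carries the error term $v(H)^2n^{v(H)-2}$, and works only when a degree gap dominates it. Consequently:
\begin{itemize}
\item there is no reduction to complete $(r-1)$-partite hosts;
\item the balancing step is unproved, and is false for general $H$ (stars prefer unbalanced complete bipartite hosts);
\item ``lower-order terms'' from the deleted matching cannot be discarded in an exact statement;
\item the graphon fallback is only a sketch.
\end{itemize}
The paper avoids all of this with the Gy\H{o}ri--Pach--Simonovits theorem. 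An $(r-1)$-partite graph on $m$ vertices with $\lfloor m/(r-1)\rfloor$ vertex-disjoint copies of $K_{r-1}$ and connected $(r-1)$-skeleton is maximized by $T_{r-1}(n)$ among $K_r$-free graphs. For $H_{r,s,\ell}$ the $s$ disjoint copies are $\{a_{1,i},a_{2,\pi(i)},a_{3,i},\dots,a_{r-1,i}\}$, where $\pi$ is a permutation with no fixed point in $\{1,\dots,\ell\}$. Skeleton connectivity uses $s\ge 4$: any two vertices of $A_1$ have a common neighbour in $A_2$, through which one swaps one vertex at a time.
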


A natural question is whether this is the only family of graphs that serves as a counterexample.
It is worth mentioning that two further families, denoted by $G_{r,s,t}$ and $J_{r,s,d}$, are also counterexamples. 
To keep the main result concise, we give their definitions and complete proofs in Appendix~\ref{app:further-counterexamples}.

The proof of Theorem~\ref{thm:counterexamples} uses a simple comparison at the next clique level. 
The graph $T_{r-1}(n)$ can be viewed as an $r$-partite graph with one empty part. 
We determine the proper $(r-1)$-colorings and proper $r$-colorings of $H_{r,s,\ell}$ exactly and show that the first normalized coloring count is larger.

A graph is claw-free if it has no induced copy of $K_{1,3}$. 
Finally, we provide a sufficient condition ensuring that a graph is $K_{k+1}$-Tur\'an-good as follows.

\begin{theorem}\label{thm:claw-stable}
Let $H$ be claw-free with $\chi(H)\le k$. 
Suppose that, for all sufficiently large $n$, some $n$-vertex $K_{k+1}$-free graph attaining $\ex(n,H,K_{k+1})$ can be transformed into a complete $k$-partite graph by adding and deleting $o(n^2)$ edges. 
Then $H$ is $K_{k+1}$-Tur\'an-good.
\end{theorem}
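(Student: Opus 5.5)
The plan is a stability-plus-cleaning argument. Let $G$ be an $n$-vertex $K_{k+1}$-free graph attaining $\ex(n,H,K_{k+1})$ which, by hypothesis, is $o(n^2)$-close to a complete $k$-partite graph $K$ with parts $V_1,\dots,V_k$. First I would show that all parts have size $n/k+o(n)$. A complete $k$-partite graph contains $\Theta(n^{v(H)})$ copies of $H$ when $\chi(H)\le k$, and $T_k(n)$ maximizes this count among complete $k$-partite graphs, at least asymptotically; I would check this via the symmetric-function/convexity argument for colouring counts. Since $G$ is extremal, $\N(H,G)\ge\N(H,T_k(n))$. Editing $o(n^2)$ edges changes the count by only $o(n^{v(H)})$. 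Together these force the parts of $K$ to be nearly balanced, because otherwise there would be a loss of order $n^{v(H)}$.

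Next comes the standard cleaning step. Choose the partition $V_1,\dots,V_k$ to minimize the number of edges inside parts; this partition is still $o(n^2)$-close to $G$. Call a vertex \emph{bad} if it has $\ge \gamma n$ neighbours in its own part, or $\ge \gamma n$ non-neighbours in some other part. There are only $o(n)$ bad vertices. I would then use extremality of $G$ through symmetrization: the number of copies of $H$ through a vertex $v$ is a function $d_H(v)$, and in an extremal graph every vertex has $d_H(v)\ge \max_u d_H(u) - O(n^{v(H)-2})$. Otherwise, deleting $v$ and cloning a maximizer $u$ keeps the graph $K_{k+1}$-free (clones are non-adjacent) and increases the count. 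This gives minimum-$H$-degree control, roughly $d_H(v)\ge (1-o(1))\,v(H)\N(H,T_k(n))/n$. Now take a vertex $v$ with $\ge\gamma n$ neighbours in every part. Because the good vertices in distinct parts are almost completely joined, a $K_k$ can be found among those neighbours, giving a $K_{k+1}$, which is a contradiction. So each vertex $v$ has a part with $o(n)$ neighbours. A vertex with too many missing neighbours also has too small $H$-degree. Hence every vertex, bad or not, has about $n/k$ non-neighbours at most, concentrated in one part. After reassigning vertices, every vertex is adjacent to all but $o(n)$ vertices of each other part, and to $o(n)$ vertices of its own part.

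The key step, and the main obstacle, is showing that no edge lies inside a part; this is where claw-freeness enters. Suppose $xy$ is an edge with $x,y\in V_1$. Since $G$ is $K_{k+1}$-free, the common neighbourhood of $x$ and $y$ cannot contain a $K_{k-1}$ spread across $V_2,\dots,V_k$. Thus there is some part $V_j$, $j\ge 2$, in which $x$ or $y$ misses a linear number of vertices. Combined with the degree structure, I expect each in-part edge to force $\Omega(n)$ missing cross edges. I would then compare the copies of $H$ destroyed by those missing cross edges against the copies that use in-part edges. The role of claw-freeness: in a copy of $H$ using an in-part edge $xy$, the remaining vertices adjacent to $x$ must avoid an independent-set structure. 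More precisely, the neighbourhood of each vertex of $H$ has independence number at most $2$. This limits how $H$ can be embedded when two adjacent vertices share a part, so each in-part edge supports only $O(n^{v(H)-2})$ copies with a smaller constant, while each missing cross edge costs a full $\Theta(n^{v(H)-2})$. I would try to make this precise by a local exchange: delete all edges inside parts and add all missing cross edges. Then verify with a counting inequality, exploiting the claw-free constraint, that $\N(H,\cdot)$ does not decrease, and is strictly larger unless $G$ is already complete $k$-partite.

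Once $G$ is complete $k$-partite, its count is at most that of the balanced one. A complete $k$-partite graph's $H$-count is a symmetric polynomial in the part sizes that increases under balancing moves (moving a vertex from a larger to a smaller part). This gives $\N(H,G)\le\N(H,T_k(n))$, so $H$ is $K_{k+1}$-Tur\'an-good. The delicate part will be the quantitative exchange inequality in the previous paragraph; if the direct count fails, I would fall back on a vertex-by-vertex argument, showing that an endpoint of an in-part edge has $H$-degree strictly below the maximum, which contradicts the symmetrization bound.
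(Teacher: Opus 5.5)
Your proposal has a genuine gap, and it sits exactly where claw-freeness is needed. Your first step (forcing the parts of $K$ to be nearly balanced) and your final step both rely on one claim: among complete $k$-partite graphs of order $n$, the Tur\'an graph $T_k(n)$ maximizes $\N(H,\cdot)$. You justify this only by a ``symmetric-function/convexity argument''.

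For general $H$ with $\chi(H)\le k$ that claim is false. Take $H=K_{1,4}$ and $k=2$. Then $\N(K_{1,4},K_{a,b})=a\binom b4+b\binom a4$, so $\N(K_{1,4},K_{4,16})=7296>4200=\N(K_{1,4},K_{10,10})$, and asymptotically the optimum is strongly unbalanced. Hence no argument that ignores the structure of $H$ can work. This balancing inequality is the real content of the theorem.

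The paper proves it as Theorem~\ref{thm:claw-balance}:
\begin{itemize}
\item It conditions on $S=\varphi^{-1}(X\cup Y)$ and on the restriction of $\varphi$ to $V(H)\setminus S$. This reduces moving one vertex between two parts $X,Y$ to comparing $\inj(H[S],K_{x,y})$ with $\inj(H[S],K_{x-1,y+1})$.
\item Claw-freeness then forces every bipartite induced subgraph $H[S]$ to be a disjoint union of paths and even cycles.
\item For such graphs, $\inj(H[S],K_{x,y})=2^e(x)_a(y)_a(x+y-2a)_c$. This depends on $(x,y)$ only through $(x)_a(y)_a$ once $x+y$ is fixed, and that product does not decrease under a balancing move.
\end{itemize}

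Conversely, the place where you invoke claw-freeness, ruling out edges inside parts, does not need it. That stability-to-exactness step works for every $H$ with $\chi(H)\le k$ because $K_{k+1}$ has a color-critical edge. The paper simply quotes it (Theorem~\ref{claw:thm:stability-exactness}, due to Hei, Hou and Liu), with the balancing inequality for complete $k$-partite graphs as its only extra hypothesis.

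Your heuristic there is not justified: you argue that an in-part edge ``supports fewer copies with a smaller constant'' because neighbourhoods in $H$ have independence number at most $2$. Your global exchange (delete all in-part edges, add all missing cross edges) is also never quantified. Your fallback is the right shape for that step: show that an endpoint of an in-part edge misses $\Omega(n)$ cross neighbours, so its $H$-degree falls well below the maximum, contradicting the symmetrization bound.

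So the fix is to keep the stability-to-exactness part independent of claw-freeness. Then spend the claw-free hypothesis on an actual proof that balancing does not decrease $\N(H,K_{n_1,\ldots,n_k})$.
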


\medskip
{\noindent\bf Notation:}
%Let $\Delta(G,G') := |E(G) \triangle E(G')|$ denote the edit distance between two graphs $G$ and $G'$ on the same vertex set.
For two sets $A$ and $B$, their symmetric difference is $A\triangle B = (A\setminus B)\cup (B\setminus A)$. 
For a graph $G$ and $V\subseteq V(G)$, we write $G[V]$ for the subgraph of $G$ induced by $V$.
For a vertex $v\in V(G)$, let $N_G(v)=\{u\in V(G):uv\in E(G)\}$ denote the neighborhood of $v$ in $G$ 
and let $d_G(v)=|N_G(v)|$ denote the degree of $v$ in $G$. 
We use $G-V$ to denote the graph $G[V(G)\setminus V]$, and denote by $G-v$ the graph of $G\setminus \{v\}$.
For a real number $x$ and a nonnegative integer $a$, write $(x)_a=x(x-1)\cdots(x-a+1)$, with $(x)_0=1$.

The organization of this paper is as follows. 
We first introduce some preliminaries in the next section. 
Section~\ref{sec:cubic} first proves Theorem~\ref{thm:cubic} and then proves Theorem~\ref{thm:alln}. 
In Sections \ref{sec:counterexamples} and \ref{sec:claw}, we prove Theorems~\ref{thm:counterexamples} and \ref{thm:claw-stable}, respectively. 
Appendix~\ref{app:further-counterexamples} proves that $G_{r,s,t}$ and $J_{r,s,d}$ are also counterexamples.

\section{Preliminaries}

A graph $G$ is called $\beta$-dense if $d_G(v)\ge (1-\beta)v(G)$ for every $v\in V(G)$.
We use the following extension lemma of Morrison, Nir, Norin, Rzążewski, and Wesolek~\cite{Morrison2023} directly.

\begin{lemma}[Morrison, Nir, Norin, Rzążewski, and Wesolek~\cite{Morrison2023}]\label{cub:lem:extension}
Let $H$ be a graph and let $X\subseteq V(H)$.
If $G$ is a $\beta$-dense graph, then every injective homomorphism from $H[X]$ to $G$ has 
at least $(1-\beta v(H)(v(H)-|X|))v(G)^{v(H)-|X|}$ extensions to an injective homomorphism from $H$ to $G$.
In particular, $\inj(H,G)\ge(1-\beta v(H)^2)v(G)^{v(H)}$.
\end{lemma}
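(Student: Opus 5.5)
We prove the extension lemma by a greedy counting argument: extend a fixed injective homomorphism $\varphi$ of $H[X]$ one vertex at a time. Write $h=v(H)$, $N=v(G)$, $m=h-|X|$, and order $V(H)\setminus X$ as $x_1,\dots,x_m$. Suppose $\varphi$ has already been extended to $X\cup\{x_1,\dots,x_{i-1}\}$. An admissible image for $x_i$ is a vertex of $G$ that is not yet used and that is adjacent to the images of all already-embedded neighbours of $x_i$.

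First we bound the number of choices at step $i$. At most $|X|+i-1\le h$ vertices have already been used. For each already-embedded neighbour $y$ of $x_i$, at most $\beta N$ vertices fail to be adjacent to $\varphi(y)$, by $\beta$-density. There are at most $h-1$ such neighbours. Hence at least
\[
N-h-(h-1)\beta N
\]
vertices are admissible for $x_i$. We may assume $\beta N\ge 1$, for otherwise $\beta N<1$ forces $G=K_N$, and in $K_N$ every vertex of every embedded neighbour is adjacent to everything. With $\beta N\ge1$ we get $h\le h\beta N$, so the count above is at least $N-h\beta N-(h-1)\beta N\ge (1-2h\beta)N$.

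This per-step bound is slightly lossy, so we sharpen it. Take $\delta_i=(h-1)\beta+h/N$, so that each step has at least $(1-\delta_i)N$ admissible images; only about $h\beta$ is allowed per step, and the repeated-vertex term is what must be controlled. For the complete-graph case, note that $K_N$ contains all $(N-|X|)_m$ extensions. Compare this with the target $(1-\beta h m)N^m$ when $\beta=0$: the target reads $N^m$, which exceeds $(N)_m$, so the claim as literally stated needs $\beta$ to absorb the falling-factorial loss.

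The cleaner route is to interpret the "extensions" count with the convention that the bound is meant up to this loss. Alternatively, observe that the paper's definition presumably intends $\beta$-dense to include the loop-free count $d(v)\le N-1$. Then $\beta N\ge1$ always holds, since $d(v)\le N-1$ forces $(1-\beta)N\le N-1$. This is the key point: $\beta$-density automatically gives $\beta N\ge1$. Hence at step $i$ the number of unavailable vertices is at most $(|X|+i-1)+(\text{number of embedded neighbours})\cdot\beta N$. Charging each previously used vertex $\beta N\ge1$ as well, the total loss is at most $h\beta N$, since there are at most $h-1$ prior vertices in total, counting both used vertices and neighbours (overlapping charges suffice). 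So each step has at least $(1-h\beta)N$ choices.

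Multiplying over the $m$ steps and using $\prod(1-a_i)\ge 1-\sum a_i$, we obtain at least $(1-\beta h m)N^m$ extensions. Taking $X=\emptyset$ gives the second statement, $\inj(H,G)\ge(1-\beta v(H)^2)v(G)^{v(H)}$.

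The main obstacle is the bookkeeping in the previous step. We must show that used vertices plus non-neighbours of embedded neighbours total at most $h\beta N$. This uses $\beta N\ge 1$, and also that there are at most $h-1$ previously embedded vertices, each contributing at most $\beta N$: a used vertex contributes 1, and a non-adjacency set contributes at most $\beta N-1$ beyond the vertex itself, since $\varphi(y)\notin N(\varphi(y))$. Counting these contributions carefully gives exactly at most $(h-1)\beta N\le h\beta N$ excluded vertices.
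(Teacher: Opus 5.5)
Your final argument is correct and is essentially the standard greedy proof of Morrison et al.\ (the paper quotes this lemma without proof). Since $d(v)\le N-1$ forces $\beta N\ge 1$, each of the at most $h-1$ already-embedded vertices $y$ rules out at most $\beta N$ candidates for $x_i$. If $y\sim x_i$, these are the vertices of $V(G)\setminus N_G(\varphi(y))$, a set that already contains $\varphi(y)$; if $y\not\sim x_i$, only $\varphi(y)$ is ruled out. This leaves at least $(1-(h-1)\beta)N$ choices per step.

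When you write it up, drop the earlier detours. The case $\beta N<1$, and likewise the $\beta=0$ comparison with $K_N$, never occurs at all, since even $K_N$ is $\beta$-dense only for $\beta\ge 1/N$. This is exactly why the statement is literally true, so no reinterpretation ``up to the falling-factorial loss'' is needed. Also say explicitly that if $h\beta>1$ (and $m\ge 1$) the target bound is negative and the claim is trivial, so $\prod(1-a_i)\ge 1-\sum a_i$ is only ever applied with $a_i\le 1$.
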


%Taking $X=\varnothing$ gives
%\begin{equation}\label{cub:eq:global-extension}
%\inj(H,G)\ge(1-\beta v(H)^2)v(G)^q.
%\end{equation}

We will use a stability theorem of F\"uredi~\cite{Furedi}.

\begin{theorem}[F\"uredi~\cite{Furedi}]\label{cub:thm:furedi}
Every $n$-vertex $K_{r+1}$-free graph $G$ contains a spanning $r$-partite subgraph $Q$ such that $e(G)-e(Q)\le e(T_r(n))-e(G)$.
\end{theorem}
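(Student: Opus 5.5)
The plan is to follow the degree-greedy argument of Erd\H{o}s, as refined by F\"uredi: build a vertex partition $V_1,\dots,V_r$ by repeatedly passing to the neighbourhood of a maximum-degree vertex. Then let $Q$ consist of the edges of $G$ between distinct parts. The goal is the single inequality
\[
e(G)\le e(K_{V_1,\dots,V_r})-\sum_{i}e(G[V_i]).
\]
Since $e(G)-e(Q)=\sum_i e(G[V_i])$, and since $e(K_{V_1,\dots,V_r})\le e(T_r(n))$ by Tur\'an's theorem (the complete $r$-partite graph is itself $K_{r+1}$-free), this rearranges exactly to $e(G)-e(Q)\le e(T_r(n))-e(G)$.

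\textbf{Construction.} Set $A_1=V(G)$. Given $A_i\neq\varnothing$, choose $x_i\in A_i$ of maximum degree in $G[A_i]$, and put $A_{i+1}=N_G(x_i)\cap A_i$ and $V_i=A_i\setminus A_{i+1}$. Note that $x_i\in V_i$, so each step strictly shrinks $A_i$ and the process terminates. By construction the vertices $x_1,\dots,x_i$ are pairwise adjacent, and every vertex of $A_{i+1}$ is adjacent to all of them. Hence if $A_{r+1}$ were nonempty, any vertex in it together with $x_1,\dots,x_r$ would form a $K_{r+1}$. So the procedure stops after at most $r$ steps, and $V_1,\dots,V_r$ (padded with empty sets if needed) partition $V(G)$. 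Moreover $A_{i+1}=V_{i+1}\cup\dots\cup V_r$.

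\textbf{Counting.} Fix $i$ and count the edges of $G[A_i]$ having at least one endpoint in $V_i$. Summing $d_{G[A_i]}(v)$ over $v\in V_i$ counts the edges inside $V_i$ twice and the edges from $V_i$ to $A_{i+1}$ once. So this number equals
\[
\sum_{v\in V_i}d_{G[A_i]}(v)-e(G[V_i]).
\]
By the maximality of $x_i$, each term satisfies $d_{G[A_i]}(v)\le d_{G[A_i]}(x_i)=|A_{i+1}|$. Hence the count is at most $|V_i||A_{i+1}|-e(G[V_i])$.

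Every edge of $G$ is counted for exactly one $i$, namely the smallest index of a part containing one of its endpoints. Summing over $i$ therefore gives
\[
e(G)\le\sum_i|V_i|\,|A_{i+1}|-\sum_i e(G[V_i]).
\]
The first sum is exactly $e(K_{V_1,\dots,V_r})$, so this is the desired inequality.

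\textbf{Main obstacle.} The only step needing care is the degree bound. The maximum-degree vertex must be chosen inside the current induced subgraph $G[A_i]$, not in $G$. This is what makes $d_{G[A_i]}(v)\le|A_{i+1}|$ hold for every $v\in V_i$, and what lets the counts telescope over the nested sets $A_1\supseteq A_2\supseteq\cdots$ without overcounting. The remaining steps are bookkeeping.
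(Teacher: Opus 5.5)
Your argument is correct: the nested max-degree partition, the bound $d_{G[A_i]}(v)\le|A_{i+1}|$ for $v\in V_i$, and the count assigning each edge to the smallest index of a part containing one of its endpoints give $e(G)\le e(K_{V_1,\dots,V_r})-\sum_i e(G[V_i])$, and this rearranges to the stated inequality once you use $e(K_{V_1,\dots,V_r})\le e(T_r(n))$. The paper gives no proof of this statement and only cites F\"uredi; your argument is essentially F\"uredi's original proof.
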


The next lemma tells us that we only need to consider graphs with no isolated vertices.

\begin{lemma}\label{cub:lem:isolates}
Let $H_0$ be obtained from $H$ by deleting its $s$ isolated vertices. 
If $v(G)<v(H)$, then $\inj(H,G)=0$. If $v(G)\ge v(H)$, then
$$\inj(H,G)=(v(G)-v(H_0))_s\inj(H_0,G).$$
Consequently, for a fixed forbidden clique, we only need to consider graphs with no isolated vertices for
Tur\'an-goodness, Tur\'an-stability, and the inequality in Theorem~\ref{thm:alln}.
\end{lemma}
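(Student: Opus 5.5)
The plan is to count injective homomorphisms of $H$ directly. Fix labellings $V(H)=\{x_1,\dots,x_q\}$ with $q=v(H)$, where $H_0=H[\{x_1,\dots,x_{q-s}\}]$ and $x_{q-s+1},\dots,x_q$ are the $s$ isolated vertices of $H$. First dispose of the case $v(G)<v(H)$: an injective map $V(H)\to V(G)$ cannot exist by pigeonhole, so $\inj(H,G)=0$.

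Now suppose $v(G)\ge v(H)$. Every injective homomorphism $\varphi$ from $H$ to $G$ restricts to an injective homomorphism $\varphi_0=\varphi|_{V(H_0)}$ from $H_0$ to $G$. Conversely, fix an injective homomorphism $\varphi_0$ of $H_0$. Its extensions to $H$ are exactly the injective maps sending $x_{q-s+1},\dots,x_q$ to distinct vertices of $V(G)\setminus\varphi_0(V(H_0))$. There are no edge constraints here, since these vertices are isolated and every edge of $H$ lies inside $H_0$. The available set has size $v(G)-v(H_0)$, and we choose an ordered sequence of $s$ distinct vertices from it, giving $(v(G)-v(H_0))_s$ extensions. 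This count is independent of $\varphi_0$, so summing over $\varphi_0$ yields $\inj(H,G)=(v(G)-v(H_0))_s\inj(H_0,G)$. This is a bijection-and-sum argument with no real obstacle.

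For the consequence, observe that the factor $(n-v(H_0))_s$ depends only on $n=v(G)$, and it is strictly positive when $n\ge v(H)$. Hence for $n$-vertex graphs $G$ and $T_r(n)$ we have $\inj(H,G)\le\inj(H,T_r(n))$ if and only if $\inj(H_0,G)\le\inj(H_0,T_r(n))$. Using $\inj=|\mathrm{Aut}|\cdot\N$, this equivalence gives the reduction for Tur\'an-goodness and for the inequality of Theorem~\ref{thm:alln}; the case $n<v(H)$ is trivial since both sides vanish.

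For stability, the same identity shows that $\ex(n,H,K_{r+1})-\N(H,G)$ equals $\frac{|\mathrm{Aut}(H_0)|}{|\mathrm{Aut}(H)|}(n-v(H_0))_s$ times $\ex(n,H_0,K_{r+1})-\N(H_0,G)$. The maximizers correspond, so the extremal numbers scale by the same factor. The factor is $\Theta(n^s)$, and in fact at least $c\,n^s$ for large $n$, with $c>0$ depending only on $H$. Therefore a deficit of $\delta n^{v(H)}$ for $H$ is a deficit of at most $(\delta/c')\,n^{v(H_0)}$ for $H_0$. Choosing $\delta$ accordingly transfers $K_{r+1}$-Tur\'an-stability from $H_0$ to $H$, and the same argument run in reverse transfers it from $H$ to $H_0$.

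The only mild point of care is the degenerate case where $H_0$ is empty, i.e.\ $H$ has no edges. There $\inj(H_0,G)=1$ and the statements hold trivially: goodness is immediate, and stability must be read as vacuous or handled by convention, exactly as the later theorems do.
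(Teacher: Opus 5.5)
Your proposal is correct and follows the paper's proof. You extend each injective homomorphism of $H_0$ over the $s$ isolated vertices to get the factor $(v(G)-v(H_0))_s$, and transfer goodness and the inequality of Theorem~\ref{thm:alln} by multiplying by this positive factor depending only on $n$. For stability you use that the corresponding factor for copies, $\binom{n-v(H_0)}{s}$, is at least $n^s/(2^s s!)$ once $n\ge 2v(H)$, which is how the paper chooses $\delta=\delta_0/(2^s s!)$.

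On your closing remark: for edgeless $H$ the reduction holds only vacuously, because the empty graph $H_0$ is itself not stable. $H$ then genuinely fails to be $K_{r+1}$-Tur\'an-stable, since the empty $n$-vertex graph attains the maximum, so the later theorems do not cover this case either.
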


\begin{proof}
If $v(G)<v(H)$, there is no injective map from $V(H)$ to $V(G)$, so $\inj(H,G)=0$. 
Suppose that $v(G)\ge v(H)$. After an injective homomorphism from $H_0$ to $G$ has been chosen, 
the $s$ isolated vertices can be mapped injectively to any ordered $s$-tuple of vertices outside its image. 
The number of such ordered tuples is $(v(G)-v(H_0))_s$, which proves the displayed identity.

The Tur\'an-goodness statement and the inequality follow by multiplying both sides of the corresponding inequality for $H_0$ by the same factor. 
For stability, $H_0$ is $K_{r+1}$-Tur\'an-stable, and fix $\varepsilon>0$. 
Let
$\delta_0>0$ and $n_0$ be supplied by the stability of $H_0$. 
Let $\delta=\delta_0/2^s s!$ and $n_1=\max\{n_0,2v(H)\}$.
For every $n\ge n_1$ and every $0\le i\le s-1$, we have
$n-v(H_0)-i\ge n/2$. Hence
$$\binom{n-v(H_0)}{s}
=\frac{(n-v(H_0))_s}{s!}
\ge\frac{n^s}{2^s s!}.$$

Let $G$ be an $n$-vertex $K_{r+1}$-free graph with $n\ge n_1$ and suppose that $\N(H,G)\ge\ex(n,H,K_{r+1})-\delta n^{v(H)}$.
Using the two scaling identities above and then dividing by $\binom{n-v(H_0)}{s}$, we obtain
$$\begin{aligned}
\N(H_0,G)
&\ge \ex(n,H_0,K_{r+1})
-\frac{\delta n^{v(H)}}{\binom{n-v(H_0)}{s}}\\
&\ge \ex(n,H_0,K_{r+1})
-2^s s!\delta n^{v(H)-s}\\
&=\ex(n,H_0,K_{r+1})-\delta_0n^{v(H_0)}.
\end{aligned}$$
The $K_{r+1}$-Tur\'an-stability of $H_0$ implies that $\ed(G,T_r(n))\le\varepsilon n^2$. 
Therefore $H$ is $K_{r+1}$-Tur\'an-stable.
\end{proof}

By Lemma~\ref{cub:lem:isolates}, it is enough to prove Theorems~\ref{thm:cubic} and~\ref{thm:alln} when $H$ has no isolated vertices. 
Indeed, deleting isolated vertices can only decrease $v(H)$, so each hypothesis stated in terms of $v(H)$ remains valid for the graph that remains. 
We therefore assume from now on that $H$ has no isolated vertices. 
An oriented edge of $H$ means an ordered pair $(u,v)$ with $uv\in E(H)$;  thus $H$ has $2e(H)$ oriented edges.

For an $n$-vertex graph $G$ with $n\ge v(H)$ and $v\in V(G)$, let $I_G(v)$ be the number of injective homomorphisms from $H$ to $G$ whose image contains $v$. 
Define $\Gamma_n:=(n-1)_{v(H)-1}$ and $x_v:=(n-1-d_G(v))/(n-1)$.

\begin{lemma}\label{cub:lem:vertex-upper}
For every $v\in V(G)$,
\begin{equation*}
I_G(v)\le \Gamma_n\sum_{u\in V(H)}(1-x_v)^{d_H(u)}.
\end{equation*}
Moreover, the function $\Psi(x):=\sum_{u\in V(H)}(1-(1-x)^{d_H(u)})$ is non-decreasing on $[0,1]$ 
and satisfies $\Psi(x)\ge e(H)(2x-(v(H)-2)x^2)$ for every $x\in[0,1]$.
\end{lemma}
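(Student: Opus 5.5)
The plan is to bound $I_G(v)$ by sorting injective homomorphisms according to which vertex of $H$ is sent to $v$. The two analytic claims about $\Psi$ are then checked separately.

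\emph{The vertex bound.} Fix $u\in V(H)$ and count injective homomorphisms $\varphi$ with $\varphi(u)=v$. Since $H$ has no isolated vertices we have $d_H(u)\ge1$. Order the neighbours of $u$ in $H$ as $w_1,\dots,w_{d}$, where $d=d_H(u)$, and then list the remaining vertices of $H$ in any order. We choose images greedily. The vertex $w_1$ must go to $N_G(v)$, giving at most $d_G(v)=(n-1)(1-x_v)$ choices. After $w_1,\dots,w_{i-1}$ have been placed, $w_i$ must go to a vertex of $N_G(v)$ not yet used, giving at most $d_G(v)-(i-1)$ choices. Every other vertex has at most $n-1-(\text{number already placed besides }v)$ choices. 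This bounds the count by
$$(d_G(v))_{d}\,(n-1-d)_{v(H)-1-d}.$$
To compare with $\Gamma_n(1-x_v)^d$, factor $\Gamma_n=(n-1)_d\,(n-1-d)_{v(H)-1-d}$. It then remains to show $(d_G(v))_d\le (n-1)_d\,(d_G(v)/(n-1))^d$. This holds termwise, because for $a=d_G(v)\le n-1$ and $j\ge0$,
$$\frac{a-j}{n-1-j}\le \frac{a}{n-1},$$
which is equivalent to $j(n-1)\ge ja$. If $d_G(v)<d$ the left side is $0$, so the inequality is trivial. Summing over $u\in V(H)$ gives the stated bound on $I_G(v)$.

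\emph{Monotonicity of $\Psi$.} Each summand $1-(1-x)^{d_H(u)}$ is non-decreasing on $[0,1]$ because $d_H(u)\ge1$. Hence $\Psi$ is non-decreasing there.

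\emph{The quadratic lower bound.} This is the step that needs care. For $d\ge1$ and $x\in[0,1]$, Bonferroni (the truncated binomial expansion) gives
$$(1-x)^d\le 1-dx+\binom d2x^2.$$
To justify this, set $f(x)=1-dx+\binom d2x^2-(1-x)^d$. Then $f(0)=f'(0)=0$ and $f''(x)=d(d-1)\bigl(1-(1-x)^{d-2}\bigr)\ge0$. It follows that
$$1-(1-x)^d\ge dx-\tfrac{d(d-1)}2x^2.$$
Summing over $u$ and using $\sum_u d_H(u)=2e(H)$ gives
$$\Psi(x)\ge 2e(H)x-\tfrac12\sum_u d_H(u)(d_H(u)-1)\,x^2.$$
Since $d_H(u)-1\le v(H)-2$, we have
$$\tfrac12\sum_u d_H(u)(d_H(u)-1)\le \tfrac12(v(H)-2)\cdot 2e(H)=e(H)(v(H)-2).$$
This yields $\Psi(x)\ge e(H)\bigl(2x-(v(H)-2)x^2\bigr)$. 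The main points to get right are the truncation inequality and this degree bound; everything else is direct counting.
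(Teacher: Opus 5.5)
Your proof is correct and follows the paper's argument essentially step by step. You use the same bound $(d_G(v))_d(n-1-d)_{v(H)-1-d}$ for each preimage $u$ of $v$, the same termwise ratio comparison against $\Gamma_n=(n-1)_d(n-1-d)_{v(H)-1-d}$, and the same truncated-binomial bound summed with $d_H(u)-1\le v(H)-2$. The only cosmetic difference is that you justify $(1-x)^d\le 1-dx+\binom d2x^2$ by convexity, whereas the paper writes $1-(1-x)^d=x\sum_{i=0}^{d-1}(1-x)^i$ and applies Bernoulli's inequality to each term.
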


\begin{proof}
Fix $u\in V(H)$ and put $d=d_H(u)$. 
An injective homomorphism with $u$ mapped to $v$ must map the $d$ neighbors of $u$ injectively into $N_G(v)$. 
After those images have been selected, we disregard all remaining adjacency constraints and map the other $v(H)-1-d$ vertices injectively to unused vertices. 
Hence the number of such homomorphisms is at most $(d_G(v))_d(n-1-d)_{v(H)-1-d}$. 
If $d_G(v)<d$, this contribution is zero and the required estimate is immediate. 
Suppose that $d_G(v)\ge d$. 
Since $\Gamma_n=(n-1)_d(n-1-d)_{v(H)-1-d}$ and $(d_G(v)-i)/(n-1-i)\le d_G(v)/(n-1)$ for $0\le i<d$,
$$\frac{(d_G(v))_d}{(n-1)_d}=\prod_{i=0}^{d-1}\frac{d_G(v)-i}{n-1-i}\le\left(\frac{d_G(v)}{n-1}\right)^d=(1-x_v)^d.$$
Thus the contribution associated with $u$ is at most $\Gamma_n(1-x_v)^d$. 
Summing over the unique preimage of $v$ proves the lemma.

Since each summand of $\Psi(\cdot)$ is non-decreasing, it follows that $\Psi(\cdot)$ is non-decreasing on $[0,1]$.
For a positive integer $d\le v(H)-1$, we have $1-(1-x)^d=x\sum_{i=0}^{d-1}(1-x)^i$. 
Bernoulli's inequality gives $(1-x)^i\ge1-ix$ for $0\le x\le1$, and therefore $1-(1-x)^d\ge dx-\binom d2x^2\ge dx-d(v(H)-2)x^2/2$. 
Summing this inequality over $u\in V(H)$ and using $\sum_ud_H(u)=2e(H)$ yields $\Psi(x)\ge2e(H)x-e(H)(v(H)-2)x^2$.
\end{proof}

We next give a lower bound for the number of injective homomorphisms into the Tur\'an graph.

\begin{lemma}\label{cub:lem:turan-lower}
Let $n\ge v(H)$. Then
\begin{equation*}
\inj(H,T_r(n))\ge n\Gamma_n-\frac{e(H)n^2}{r(n-1)}\Gamma_n.
\end{equation*}
Consequently, if an $n$-vertex graph $G$ satisfies $\inj(H,G)\ge\inj(H,T_r(n))-\delta n^{v(H)}$, then some vertex $v_0\in V(G)$ satisfies
\begin{equation*}
I_G(v_0)\ge v(H)\Gamma_n-\frac{v(H)e(H)n}{r(n-1)}\Gamma_n-v(H)\delta n^{v(H)-1}.
\end{equation*}
\end{lemma}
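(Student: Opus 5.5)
The plan is a union bound over the edges of $H$ for the first inequality, followed by double counting for the second.

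For the first inequality, I count all injective maps $\varphi:V(H)\to V(T_r(n))$. There are exactly $(n)_{v(H)}=n\Gamma_n$ of them. From this total I subtract those maps that fail to be homomorphisms. Since $T_r(n)$ is complete $r$-partite, a map fails only if some edge $uw\in E(H)$ has $\varphi(u)$ and $\varphi(w)$ in the same part. A union bound over the $e(H)$ edges shows that the number of bad maps is at most $e(H)$ times the number of injective maps sending a fixed pair $u,w$ into a common part.

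That second count equals $\sum_i |V_i|(|V_i|-1)\cdot(n-2)_{v(H)-2}$. Because the parts are balanced with sizes $\lfloor n/r\rfloor$ or $\lceil n/r\rceil$, I expect $\sum_i|V_i|(|V_i|-1)\le n(n/r-1)\le n^2/r-n$. Hence the bad count for one edge is at most $(n^2/r)(n-2)_{v(H)-2}$. Using $(n-2)_{v(H)-2}=\Gamma_n/(n-1)$, this is exactly $\frac{n^2}{r(n-1)}\Gamma_n$, which gives the bound. (If $v(H)=1$ there are no edges, since $H$ has no isolated vertices, so the case is trivial.)

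The step I expect to need care is $\sum_i|V_i|^2\le n^2/r$ for balanced parts. This holds with a possibly wrong direction from Cauchy–Schwarz, so I will avoid it and instead use the cruder estimate $\sum_i|V_i|(|V_i|-1)\le \max_i(|V_i|-1)\cdot n\le (\lceil n/r\rceil-1)n\le n^2/r$.

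For the consequence, I double count: $\sum_{v}I_G(v)=v(H)\inj(H,G)$, because each injective homomorphism has an image of exactly $v(H)$ vertices. Averaging over the $n$ vertices, some $v_0$ satisfies
\[
I_G(v_0)\ge \frac{v(H)}{n}\inj(H,G)\ge \frac{v(H)}{n}\Bigl(\inj(H,T_r(n))-\delta n^{v(H)}\Bigr).
\]
Substituting the first inequality gives exactly
\[
v(H)\Gamma_n-\frac{v(H)e(H)n}{r(n-1)}\Gamma_n-v(H)\delta n^{v(H)-1}.
\]
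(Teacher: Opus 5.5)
Your proposal is correct and follows the paper's proof essentially verbatim. The first inequality is a union bound over the $e(H)$ edges using $\sum_i |V_i|(|V_i|-1)\le n\max_i(|V_i|-1)\le n^2/r$, and the consequence comes from averaging via $\sum_v I_G(v)=v(H)\inj(H,G)$.

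Drop your first guess $\sum_i|V_i|(|V_i|-1)\le n(n/r-1)$ from the write-up. It is false whenever $r\nmid n$, since Cauchy--Schwarz gives the reverse inequality. Your fallback estimate is exactly the paper's $a_i-1\le n/r$, and it is all that is needed.
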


\begin{proof}
Let the part sizes of $T_r(n)$ be $a_1,\ldots,a_r$. 
For a fixed edge of $H$, the number of injective maps that send its ends to the same part is $\sum_i a_i(a_i-1)(n-2)_{v(H)-2}$. 
Since $a_i-1\le n/r$, we have $\sum_i a_i(a_i-1)\le n^2/r$. 
Every injective map that is not a homomorphism maps the ends of at least one edge of $H$ into the same part. 
A union bound over the $e(H)$ edges gives
$$\inj(H,T_r(n))\ge(n)_{v(H)}-\frac{e(H)n^2}{r}(n-2)_{v(H)-2}=n\Gamma_n-\frac{e(H)n^2}{r(n-1)}\Gamma_n,$$
as desired. 
Finally, since $\sum_{v\in V(G)}I_G(v)=v(H)\inj(H,G)$ and $\inj(H,G)\ge\inj(H,T_r(n))-\delta n^{v(H)}$, 
by averaging, we have
$$I_G(v_0)\ge v(H)\Gamma_n-\frac{v(H)e(H)n}{r(n-1)}\Gamma_n-v(H)\delta n^{v(H)-1},$$
as required.
\end{proof}

The first observation replaces the estimate for the largest part used in \cite{Morrison2023,GerbnerKarim}.

\begin{lemma}\label{cub:lem:max-cut}
Let $Q$ be a spanning $r$-partite subgraph of a graph $G$ with the maximum possible number of edges. 
Then $d_Q(v)\ge(1-1/r)d_G(v)$ for every $v\in V(G)$.
\end{lemma}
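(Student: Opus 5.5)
The plan is the standard local-switching argument for a maximum $r$-cut. Let $V_1,\ldots,V_r$ be the parts of $Q$; some of them may be empty. Since $Q$ is a spanning $r$-partite subgraph of $G$ with the maximum number of edges, we may assume $Q$ consists of \emph{all} edges of $G$ between distinct parts. Indeed, adding any such missing edge keeps the graph $r$-partite with the same partition and strictly increases $e(Q)$, which would contradict maximality. Hence, for a vertex $v\in V_i$, we have $d_Q(v)=d_G(v)-|N_G(v)\cap V_i|$.

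Next we compare $v$'s current part with the alternatives. Fix $v\in V_i$. For each $j\ne i$, move $v$ from $V_i$ to $V_j$ and take all edges of $G$ between the new parts. The result is again a spanning $r$-partite subgraph of $G$. Only the edges at $v$ change, and the number of edges changes by $|N_G(v)\cap V_i|-|N_G(v)\cap V_j|$. By maximality this change is at most $0$, so $|N_G(v)\cap V_i|\le|N_G(v)\cap V_j|$ for every $j$. So $v$'s own part contains the fewest of its neighbours among all $r$ parts.

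Finally we average. The sets $N_G(v)\cap V_j$ for $j=1,\ldots,r$ partition $N_G(v)$, since $v\notin N_G(v)$. The smallest of them has size at most the average, which gives $|N_G(v)\cap V_i|\le d_G(v)/r$. Therefore
$$d_Q(v)=d_G(v)-|N_G(v)\cap V_i|\ge\left(1-\frac1r\right)d_G(v).$$

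There is no real obstacle here. The only points needing care are that moving $v$ into a previously empty part is allowed, since an $r$-partite graph may have empty parts, and that maximality forces $Q$ to contain every cross edge of its own partition.
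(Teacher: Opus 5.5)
Your proof is correct and is the paper's argument: moving $v$ from $V_i$ to any $V_j$ and using maximality gives $|N_G(v)\cap V_i|\le|N_G(v)\cap V_j|$ for every $j$, and averaging over the $r$ parts gives the bound. You also state explicitly that maximality forces $Q$ to contain every cross edge of its partition, a step the paper uses without comment when it writes $d_Q(v)=d_G(v)-d_G(v,V_i)$.
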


\begin{proof}
Let $V_1,\ldots,V_r$ be a partition of $Q$, and let $v\in V_i$. 
For convenience, we write $d_G(v,V_i)=|N_G(v)\cap V_i|$.
Moving $v$ from $V_i$ to $V_j$ changes the number of crossing edges by $d_G(v,V_i)-d_G(v,V_j)$. 
Maximality gives $d_G(v,V_i)\le d_G(v,V_j)$ for every $j$. Hence $rd_G(v,V_i)\le\sum_jd_G(v,V_j)=d_G(v)$, and therefore $d_Q(v)=d_G(v)-d_G(v,V_i)\ge(1-1/r)d_G(v)$.
\end{proof}

We now sharpen the multipartite rebalancing lemma.
The proof follows the structure of \cite[Lemma 2.3]{Morrison2023}. 

\begin{lemma}\label{cub:lem:transfer}
Let $C=4v(H)^2+2v(H)-4$. 
Suppose $0<\beta\le1/4$ and $F$ is a $\beta$-dense graph. 
Let $A,B\subseteq V(F)$ be disjoint independent sets such that $|A|\ge|B|\ge1$, every vertex of $A$ is adjacent to every vertex outside $A$, 
and every vertex of $B$ is adjacent to every vertex outside $B$. 
If $a\in A$ and $b\in B$, then
\begin{equation*}
\inj(H,F-a)-\inj(H,F-b)\ge2e(H)(|A|-|B|)(1-C\beta)v(F)^{v(H)-2}.
\end{equation*}
\end{lemma}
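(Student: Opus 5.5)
A direct comparison works best. Since $F-a$ and $F-b$ are both induced subgraphs of $F$, I would write each count as $\inj(H,F)$ minus the number of injective homomorphisms whose image contains the deleted vertex. Thus
\[
\inj(H,F-a)-\inj(H,F-b)=I_F(b)-I_F(a),
\]
where $I_F(w)$ counts injective homomorphisms from $H$ to $F$ whose image contains $w$. Next, $I_F(w)=\sum_{u\in V(H)}J(u,w)$, where $J(u,w)$ is the number of injective homomorphisms sending $u$ to $w$. The goal is then to show, for each $u$ with $d=d_H(u)\ge1$,
\[
J(u,b)-J(u,a)\ge d(|A|-|B|)(1-C\beta)v(F)^{v(H)-2}.
\]
Summing over $u$ and using $\sum_u d_H(u)=2e(H)$ then gives the claim.

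To compare $J(u,a)$ and $J(u,b)$, use the structure of $F$. The vertex $a$ is adjacent to exactly $V(F)\setminus A$, and $b$ to exactly $V(F)\setminus B$. A homomorphism sending $u\mapsto a$ is the same as a homomorphism $\varphi$ of $H-u$ into $F-a$ such that $\varphi(N_H(u))\cap A=\emptyset$. Similarly, $u\mapsto b$ corresponds to $\varphi$ into $F-b$ with $\varphi(N_H(u))\cap B=\emptyset$.

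I would pair these via the transposition $\sigma$ of $V(F)$ swapping $a$ and $b$. Because $a$ and $b$ have neighbourhoods $V\setminus A$ and $V\setminus B$, $\sigma$ is not an automorphism. So instead I would count the symmetric difference directly. Consider maps $\varphi$ of $H-u$ into $F-\{a,b\}$ with $\varphi(N(u))\cap B=\emptyset$ but $\varphi(N(u))\cap A\neq\emptyset$, minus those with the roles of $A$ and $B$ reversed. Maps using $a$ or $b$ themselves are handled by a correction term of order $v(H)^2 v(F)^{v(H)-2}\cdot$(small), or absorbed into the error.

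The main term comes from maps where exactly one neighbour $y$ of $u$ lands in $A\setminus\{a\}$ and no neighbour lands in $B$. There are $d$ choices of $y$ and $|A|-1$ choices of its image. The map then extends to the remaining vertices with all other constraints satisfied. By Lemma~\ref{cub:lem:extension}, applied in the $\beta$-dense graph (after deleting $O(1)$ vertices, which changes density by $O(1/v(F))$, to be absorbed), this gives at least $(1-\beta v(H)^2)v(F)^{v(H)-2}$ extensions. From these, subtract extensions placing some other neighbour in $B\cup A$. The counts are symmetric in $A$ and $B$, so the difference of the two families is
\[
d(|A|-|B|)\,v(F)^{v(H)-2}(1-O(\beta v(H)^2)).
\]
The key point is that $|A|,|B|\le\beta v(F)$, since vertices of $A$ have degree $v(F)-|A|$. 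Hence events with two neighbours in $A\cup B$ contribute at most $d^2\beta\,|A|\,v(F)^{v(H)-2}$, which is of order $\beta v(H)^2$ relative to the main term.

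The main obstacle is that these second-order terms must come out proportional to $|A|-|B|$, rather than to $|A|$. A naive bound of size $\beta|A|$ would swamp the main term when $|A|=|B|+1$. So the error terms must be organised as differences as well. I would write
\[
J(u,b)-J(u,a)=\sum_{S}\bigl(N_S(A)-N_S(B)\bigr)
\]
by inclusion–exclusion over the set $S\subseteq N_H(u)$ of neighbours landing in the independent set. Each term with $|S|=k$ is a polynomial in $|A|$ (respectively $|B|$), with degree-$k$ monotone structure. Then $N_S(A)-N_S(B)$ factors as $(|A|-|B|)$ times a quantity of order $k\,(\beta v(F))^{k-1}v(F)^{v(H)-1-k}$.

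Here the hypothesis that every vertex of $A$ is complete to everything outside $A$ is what makes the counts depend on $A$ only through $|A|$ in the leading terms. The terms with $k\ge2$ sum to at most $(|A|-|B|)v(F)^{v(H)-2}\cdot O(\beta v(H)^2)$, using $\beta\le1/4$ to sum the geometric series. Tracking constants, $2v(H)^2$ should come from the extension losses on each side, and a further $2v(H)^2+2v(H)-4$ from the $k\ge2$ and vertex-collision terms, giving $C=4v(H)^2+2v(H)-4$.
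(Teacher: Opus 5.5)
\paragraph{Verdict: genuine gap.} Your reduction $\inj(H,F-a)-\inj(H,F-b)=I_F(b)-I_F(a)$ is correct. You also correctly isolate the real difficulty: every error term of size about $\beta|A|\,v(F)^{v(H)-2}$ must carry the factor $|A|-|B|$. But the proposal never supplies a mechanism that achieves this; the decisive step is only asserted.

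\paragraph{Where it fails.} Your claim that the two main-term families are ``symmetric in $A$ and $B$'' is false. With $u\mapsto b$ and a neighbour $y$ of $u$ sent into $A\setminus\{a\}$, the other $H$-neighbours of $y$ must avoid $A$ but may use $B\setminus\{b\}$; in the mirror family the roles are reversed. Write $E_A,E_B$ for the two extension counts. The difference is then $(|A|-|B|)E_A+(|B|-1)(E_A-E_B)$. Lower-bounding $E_A$ by Lemma~\ref{cub:lem:extension} and upper-bounding $E_B$ trivially loses $d(|A|-1)\beta v(H)^2v(F)^{v(H)-2}$. That loss is proportional to $|A|$, which is exactly the swamping you warn about.

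Your inclusion--exclusion repair rests on the same false premise. $N_S(A)$ is not a function of $|A|$ alone times a factor common to both sides. The $H$-neighbours of $S$, and the vertices outside $N_H[u]$, interact asymmetrically with $A\setminus\{a\}$ and $B\setminus\{b\}$, and also with $F_0$. So the claimed factorisation $(|A|-|B|)\,k(\beta v(F))^{k-1}v(F)^{v(H)-1-k}$ is unproved, and the value of $C$ is matched to the target rather than derived. Similarly, maps using both $a$ and $b$ cancel exactly only after summing over $u$; for a single $u$ they need the same symmetry argument.

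\paragraph{The missing ideas.} What you need is a way to give the two sides a common factor. The paper does this in two steps.

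First, decompose by the whole preimage $S=\varphi^{-1}(A\cup B)$ rather than by the neighbours of one vertex. Since $A\cup B$ is complete to $F_0=F-(A\cup B)$, the number of such maps into $X\in\{F-a,F-b\}$ is $\inj(H[S],X[A\cup B])\cdot\inj(H-S,F_0)$, and the second factor is identical for both choices of $X$. For $|S|\le2$ the difference is then exact: it is $0$ if $S$ is independent and $2(|A|-|B|)\inj(H-S,F_0)$ if $S$ is an edge. So the extension lemma, applied to the $2\beta$-dense graph $F_0$, only acts on a quantity already multiplied by $|A|-|B|$. This produces the $(2v(H)^2+2v(H)-4)\beta$ loss.

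Second, for $|S|\ge3$, fix the transposition you rejected. Swapping $a$ with $b$ alone is not an automorphism. However, take $A'\subseteq A$ with $a\in A'$ and $|A'|=|B|$, and swap $A'$ with all of $B$ via a bijection sending $a$ to $b$; this is an automorphism of $F-(A\setminus A')$. Keep the maps into $F-b$ that avoid $a$, and apply the swap to those that contain $a$ but avoid $A\setminus A'$. This gives an $S$-preserving injection into maps to $F-a$. The only uncompensated maps send one vertex to $a$, another into $A\setminus A'$ (a set of size exactly $|A|-|B|$), and a third into $A\cup B$. There are at most $2\beta v(H)^3(|A|-|B|)v(F)^{v(H)-2}$ of them. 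Since $H$ has no isolated vertices, $v(H)^3/e(H)\le 2v(H)^2$, which supplies the rest of $C$.
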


\begin{proof}
The proof is essentially the same as the proof of \cite[Lemma 2.3]{Morrison2023}.
For the sake of completeness, we sketch the proof as follows.  

For convenience, we let $F_0=F-(A\cup B)$. For $S\subseteq V(H)$ and $X\in\{F-a,F-b\}$, let $\mathcal J_S(X)$ be the set of injective homomorphisms $\varphi:H\to X$ with $\varphi^{-1}(A\cup B)=S$. Since every edge between $A\cup B$ and $V(F_0)$ is present,
$$|\mathcal J_S(X)|=\inj(H[S],X[A\cup B])\inj(H-S,F_0).$$
Define $\Delta(S)=|\mathcal J_S(F-a)|-|\mathcal J_S(F-b)|$. Then $\inj(H,F-a)-\inj(H,F-b)=\sum_{S\subseteq V(H)}\Delta(S)$.

First we estimate the contribution of sets of size at most two. 
Since $A$ and $B$ are independent and $F$ is $\beta$-dense, $|A|,|B|\le\beta v(F)$, so $|V(F_0)|\ge(1-2\beta)v(F)$. 
Every vertex of $F_0$ has at most $\beta v(F)\le2\beta|V(F_0)|$ non-neighbors in $F_0$, and hence $F_0$ is $2\beta$-dense. 
By Lemma \ref{cub:lem:extension}, for every two element set $S$, we have $\inj(H-S,F_0)\ge(1-2\beta v(H)^2)|V(F_0)|^{v(H)-2}$. 
If $1-2\beta v(H)^2\le0$, then \eqref{cub:eq:f0-extension} below is immediate because its right-hand side is non-positive. 
If $1-2\beta v(H)^2>0$, then $|V(F_0)|\ge(1-2\beta)v(F)$ implies
$$\inj(H-S,F_0)\ge(1-2\beta v(H)^2)(1-2\beta)^{v(H)-2}v(F)^{v(H)-2}.$$
For non-negative $x$ and $0\le y\le1$, $(1-x)(1-y)^p\ge1-x-py$: 
if $x\ge1$ this is immediate, while for $0\le x\le1$ it follows from $1-(1-y)^p\le py$. 
Thus
\begin{equation}\label{cub:eq:f0-extension}
\inj(H-S,F_0)\ge(1-2\beta v(H)^2-2\beta(v(H)-2))v(F)^{v(H)-2}.
\end{equation}
If $S$ is independent, then $\Delta(S)=0$. If $S$ consists of the ends of an edge of $H$, then $H[S]=K_2$ and
$$\Delta(S)=2\big((|A|-1)|B|-|A|(|B|-1)\big)\inj(H-S,F_0)=2(|A|-|B|)\inj(H-S,F_0).$$
Summing \eqref{cub:eq:f0-extension} over the edges of $H$ gives
\begin{equation}\label{cub:eq:small-S}
\sum_{|S|\le2}\Delta(S)\ge2e(H)(|A|-|B|)(1-(2v(H)^2+2v(H)-4)\beta)v(F)^{v(H)-2}.
\end{equation}

It remains to control the possible negative contribution from $|S|\ge3$. 
Choose $A'\subseteq A$ with $a\in A'$ and $|A'|=|B|$, and choose a bijection between $A'$ and $B$ that pairs $a$ with $b$. 
Let $\iota$ swap each vertex of $A'$ with its paired vertex in $B$, and let $\iota$ fix every vertex of $F_0$. 
Consider an injective homomorphism $\varphi:H\to F-b$. 
If $a$ is not in its image, keep $\varphi$ unchanged. 
If $a$ is in its image and the image avoids $A\setminus A'$, replace $\varphi$ by $\iota\circ\varphi$. 
In the second case the new image avoids $a$, and the complete bipartite structure between $A'$ and $B$, 
together with the complete adjacency to $F_0$, shows that $\iota\circ\varphi$ is an injective homomorphism to $F-a$. 
The two cases have disjoint images according as $b$ is absent or present, and the construction is injective within each case.

A homomorphism not covered by this injection and belonging to some $\mathcal J_S(F-b)$ with $|S|\ge3$ must map one vertex of $H$ to $a$, 
a second vertex to $A\setminus A'$, and a third vertex to $A\cup B$. 
If we disregard all adjacency and injectivity restrictions, then the number of such maps is at most 
$$v(H)^3(|A|-|B|)(|A|+|B|)v(F)^{v(H)-3}\le2\beta v(H)^3(|A|-|B|)v(F)^{v(H)-2}.$$
It means that
\begin{equation}\label{cub:eq:large-S}
\sum_{|S|\ge3}\Delta(S)\ge-2\beta v(H)^3(|A|-|B|)v(F)^{v(H)-2}.
\end{equation}
Combining \eqref{cub:eq:small-S} and \eqref{cub:eq:large-S}, and using $e(H)\ge v(H)/2$, we conclude that
$$\begin{aligned}
\sum_S\Delta(S)
&\ge2e(H)(|A|-|B|)\left(1-\left(2v(H)^2+2v(H)-4+\frac{v(H)^3}{e(H)}\right)\beta\right)v(F)^{v(H)-2}\\
&\ge2e(H)(|A|-|B|)(1-C\beta)v(F)^{v(H)-2},
\end{aligned}$$
as wanted.
\end{proof}

\begin{lemma}\label{cub:lem:dense-rpartite}
Let $C=4v(H)^2+2v(H)-4$. If $0<\beta\le1/4$, $C\beta<1$, and $Q$ is an $n$-vertex $r$-partite $\beta$-dense graph, then
\begin{equation*}
\inj(H,T_r(n))-\inj(H,Q)\ge2e(H)(1-C\beta)(e(T_r(n))-e(Q))n^{v(H)-2}.
\end{equation*}
\end{lemma}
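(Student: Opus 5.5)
We prove Lemma~\ref{cub:lem:dense-rpartite} by a rebalancing argument that repeatedly applies Lemma~\ref{cub:lem:transfer}. Let $Q$ have parts $V_1,\dots,V_r$. The plan has two stages. First, add all missing edges between distinct parts. Second, move vertices one at a time from a largest part to a smallest part until the part sizes differ by at most one. Along the way we track how $\inj(H,\cdot)$ and the number of edges change.

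\emph{Completing the multipartite graph.} Let $Q^*$ be the complete $r$-partite graph with the same parts $V_1,\dots,V_r$. Adding edges does not destroy any injective homomorphism, so $\inj(H,Q^*)\ge\inj(H,Q)$. For the gain, each added edge $xy$ contributes at least the number of injective homomorphisms of $H$ into $Q^*$ that use $xy$ as the image of an edge of $H$ (in either orientation). By Lemma~\ref{cub:lem:extension} applied with $X$ the two endpoints of an edge of $H$, each of the $2e(H)$ oriented edges of $H$ mapped onto $xy$ extends in at least $(1-\beta v(H)^2)n^{v(H)-2}$ ways, since $Q^*\supseteq Q$ is $\beta$-dense. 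These homomorphisms are not homomorphisms of $Q$. Counting more carefully, we insert the missing edges one by one and, at each step, count homomorphisms that use the newly added edge. The intermediate graphs are all $\beta$-dense because they contain $Q$. This gives
\[
\inj(H,Q^*)-\inj(H,Q)\ge 2e(H)(1-\beta v(H)^2)(e(Q^*)-e(Q))n^{v(H)-2}.
\]

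\emph{Rebalancing.} If $Q^*$ is not balanced, pick a largest part $A$ and a smallest part $B$ with $|A|\ge|B|+2$, and vertices $a\in A$, $b\in B$. Moving a vertex from $A$ to $B$ yields a graph isomorphic to $Q^*-a$ plus a vertex joined to everything outside $B\setminus\{b\}\cup\{\text{new}\}$. More precisely, the difference can be written as $\inj(H,F-b)$ versus $\inj(H,F-a)$ after adding a twin, where $F$ is $Q^*$ together with a new vertex $c$ placed in $B$. Then $F$ is a complete multipartite graph on $n+1$ vertices with parts $A$ and $B\cup\{c\}$. Removing $b$ or $a$ gives respectively a graph isomorphic to $Q^*$ and the moved graph. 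Lemma~\ref{cub:lem:transfer} applies because $F$ is still roughly $\beta$-dense: the parts of $F$ have size at most $\beta n+1$, and we may need to absorb the $+1$ into $\beta$, which is where constants need care. It gives a gain of at least $2e(H)(|A|-|B|-1)(1-C\beta)(n+1)^{v(H)-2}$. The corresponding edge gain is exactly $|A|-|B|-1$, so the increments match the edge increments with factor $2e(H)(1-C\beta)n^{v(H)-2}$. Since the parts only become more balanced, density is preserved throughout. Summing over the moves and adding the completion stage (note $C\ge v(H)^2$) gives the lemma, because $e(T_r(n))-e(Q)=(e(Q^*)-e(Q))+(e(T_r(n))-e(Q^*))$.

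\emph{Main obstacle.} The delicate step is the rebalancing. The exact identification with Lemma~\ref{cub:lem:transfer} requires the sets $A$ and $B$ in $F$ to satisfy $|A|\ge|B|$ with the correct roles, and $F$ to be $\beta$-dense. To handle this cleanly I would instead take $F=Q^*-\{\text{nothing}\}$ and use the moved graph as $F-a$ with $F$ defined by duplicating $a$ into $B$. If the density needs adjusting, replace $\beta$ by the exact ratio: a part of size $m\le\beta n$ in an $n$-vertex graph gives non-degree $\le m/(n+1)$, so density is fine. The dense-part bound $|A|\le\beta n$ holds because $Q$ is $\beta$-dense and the parts are independent.
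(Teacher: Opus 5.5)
Your proposal is correct and follows essentially the same route as the paper. Both arguments fill in missing crossing edges using Lemma~\ref{cub:lem:extension}, then rebalance a complete unbalanced $r$-partite graph by adding a new vertex to a smallest part and applying Lemma~\ref{cub:lem:transfer} to $A$ and the enlarged part; the paper merely phrases this as an induction on $e(T_r(n))-e(Q)$. The density worry you flag is not an issue: the enlarged part has size $|B|+1<|A|\le\beta n$, so $F$ is $\beta$-dense on $n+1$ vertices. Taking $b$ to be the new vertex itself, as the paper does, also covers the case $B=\emptyset$.
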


\begin{proof}
We induct on the non-negative integer $e(T_r(n))-e(Q)$. 
If $e(T_r(n))-e(Q)=0$, equality in Tur\'an's theorem~\cite{turan1941} implies $Q\cong T_r(n)$.
Then we assume that $e(T_r(n))-e(Q)\ge1$. 

We first suppose that a crossing edge is missing from an $r$-partition of $Q$, and let $Q'$ be obtained by adding one such edge. 
For each oriented edge of $H$, Lemma~\ref{cub:lem:extension} extends the map that sends its ends to the new oriented edge in at least $(1-\beta v(H)^2)n^{v(H)-2}$ ways. 
Different oriented edges produce disjoint sets of new injective homomorphisms. 
Hence $\inj(H,Q')-\inj(H,Q)\ge2e(H)(1-\beta v(H)^2)n^{v(H)-2}\ge2e(H)(1-C\beta)n^{v(H)-2}$. 
Applying the induction hypothesis to $Q'$ proves the lemma.

It remains to consider a complete but unbalanced $r$-partite graph. 
Choose a largest part of size $p$ and a smallest part of size $q$; then $p\ge q+2$. 
Add a new vertex to the smaller part and call the resulting $(n+1)$-vertex graph $F$. 
Let $a$ be a vertex of the part of size $p$ and let $b$ be the new vertex. 
Then $Q=F-b$, while $Q'=F-a$ is obtained from $Q$ by moving one vertex from the larger part to the smaller part. 
Since $Q$ is complete and $\beta$-dense, every part of $Q$ has size at most $\beta n$. 
The largest part of $F$ still has size $p$, and therefore every vertex of $F$ has degree at least $n+1-p\ge(1-\beta)(n+1)$. 
Thus $F$ is $\beta$-dense. 
It follows from Lemma~\ref{cub:lem:transfer} that
$$\inj(H,Q')-\inj(H,Q)\ge2e(H)(p-q-1)(1-C\beta)(n+1)^{v(H)-2}\ge2e(H)(p-q-1)(1-C\beta)n^{v(H)-2}.$$
Every part of $Q'$ has size at most $p\le\beta n$, so $Q'$ is also $\beta$-dense. 
Since $e(Q')-e(Q)=p-q-1$, the induction hypothesis applied to $Q'$ again proves this lemma.
\end{proof}

\begin{proposition}\label{cub:prop:parameters}
Let $H$ be a graph with $v(H)\ge2$, let $C=4v(H)^2+2v(H)-4$, and suppose $r\ge4v(H)^3+11v(H)^2$. Then there exist positive reals $\eta$ and $\beta$ such that
\begin{equation*}
2\eta-(v(H)-2)\eta^2>\frac{v(H)}{r}\quad \text{and} \quad \eta+\frac1r<\beta<\frac1{2C}.
\end{equation*}
\end{proposition}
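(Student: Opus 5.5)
Write $h=v(H)$ and choose $\eta$ slightly above $h/(2r)$. The first inequality says that $\eta$ must exceed roughly $h/(2r)$. The second needs room: $\eta+1/r<1/(2C)$. So the whole plan reduces to checking that $h/(2r)+1/r$ is comfortably smaller than $1/(2C)$, with enough slack to absorb the quadratic correction $(h-2)\eta^2$.

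Concretely, I take $\eta=(h+1)/(2r)$, so that $2\eta=(h+1)/r$. The first inequality then becomes
\[
\frac{1}{r}>(h-2)\eta^2=\frac{(h-2)(h+1)^2}{4r^2},
\]
which is equivalent to $4r>(h-2)(h+1)^2$. Since $(h-2)(h+1)^2\le h^3$ for $h\ge 2$, and $r\ge 4h^3$, this holds.

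For the second inequality, I need some $\beta$ with $\eta+1/r<\beta<1/(2C)$. Such a $\beta$ exists exactly when $\eta+1/r<1/(2C)$. Here $\eta+1/r=(h+3)/(2r)$, so the condition is $r>C(h+3)$. Expanding,
\[
C(h+3)=(4h^2+2h-4)(h+3)=4h^3+14h^2+2h-12.
\]
The hypothesis only gives $r\ge 4h^3+11h^2$, which is short of $4h^3+14h^2$ by about $3h^2$. So $\eta=(h+1)/(2r)$ is too generous, and I must shrink $\eta$.

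I therefore take $\eta=(h+\theta)/(2r)$ with $\theta>0$ small. Then $\eta+1/r=(h+2+\theta)/(2r)$, and
\[
C(h+2)=4h^3+10h^2-8,
\]
so the second condition becomes $r>C(h+2+\theta)=4h^3+10h^2-8+\theta C$. The first condition becomes $\theta/r>(h-2)(h+\theta)^2/(4r^2)$, that is, $4r\theta>(h-2)(h+\theta)^2$. With $r\approx 4h^3$ this requires $\theta$ of order at least $1/16$. Taking $\theta=1/4$ and using $r\ge 4h^3$ gives
\[
4r\theta\ge 4h^3>(h-2)(h+1/4)^2,
\]
so the first inequality holds. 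For the second, $\theta C=h^2+h/2-1$, so it suffices that
\[
4h^3+11h^2>4h^3+10h^2-8+h^2+h/2-1=4h^3+11h^2+h/2-9.
\]
This holds iff $h/2<9$, i.e.\ $h<18$. So $\theta=1/4$ is slightly too large for big $h$, and I need a smaller $\theta$ that depends on $h$.

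The step I expect to be the main obstacle is balancing $\theta$ between these two constraints. The natural choice is $\theta=c/h$ with $c$ a constant slightly above $1/4$. Then $\theta C\approx 4ch$, which is linear in $h$, and the available slack in the second condition is $h^2+8$, so that condition is comfortably met. The first condition, $4r\theta>(h-2)(h+\theta)^2$, becomes roughly $16c h^2>h^3$, which fails for large $h$ (it needs $\theta\gtrsim h^3/(4r)\cdot h^{-1}$-type scaling, i.e.\ $\theta$ of order $1/16$, not $c/h$). So $\theta$ cannot simply be taken proportional to $1/h$.

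The honest resolution is to use a $\theta$ that depends on $r$: take $\theta=(h-2)(h+1)^2/(4r)+\epsilon$, or simply $\theta=h^3/(4r)\le 1/16$. This $\theta$ satisfies the first condition up to the small correction $(h+\theta)^2$ versus $h^2$, which is absorbed by the margin $h^3\ge(h-2)(h+1)^2$. For the second condition, $\theta C\le C/16=h^2/4+O(h)$, and this fits within the slack $h^2+8$. I will verify both inequalities carefully for general $h\ge 2$, and check the small cases $h=2,3$ directly. Finally I pick $\beta$ as the midpoint of the open interval $(\eta+1/r,\,1/(2C))$, which is positive; the bound $\beta\le 1/4$ needed elsewhere is automatic since $1/(2C)\le 1/4$.
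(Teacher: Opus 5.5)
Your proposal is correct, but your final choice $\theta=h^3/(4r)$ is only sketched; both checks go through. With $h=v(H)$ and $\eta=(h+\theta)/(2r)$, note that $\theta<1/16$ because $r>4h^3$. The first inequality is equivalent to $h^3>(h-2)(h+\theta)^2$. This holds because $(h-2)(h+\theta)^2\le (h-2)(h+1)^2=h^3-3h-2$; for $h=2$ the right side is $0$. The second inequality reduces to $r>C(h+2+\theta)=4h^3+10h^2-8+\theta C$. This follows from $\theta C<C/16=h^2/4+h/8-1/4<h^2+8$ together with $r\ge 4h^3+11h^2$. So no separate check of $h=2,3$ is needed. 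Taking $\beta$ as the midpoint of $(\eta+1/r,\,1/(2C))$ finishes the proof.

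This is a different route from the paper's. The paper puts $\eta$ at the \emph{top} of the admissible window. It sets $x=1/(2C)-1/r$ and bounds $2x-(h-2)x^2\ge 2x\bigl(1-(h-2)/(4C)\bigr)$. It then reduces $2x-(h-2)x^2>h/r$ to $r>2C+4C^2h/(4C-h+2)$, checked by a quartic polynomial inequality, and finally picks $\eta<x$ by continuity. Taking the largest admissible $\eta$ is the most favourable choice for the first inequality, since $2\eta-(h-2)\eta^2$ is increasing there. The cost is the heavier polynomial bookkeeping.

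You instead put $\eta$ just above the lower root $\approx h/(2r)$ of the first inequality. That makes the first inequality nearly trivial and moves all the arithmetic into the linear condition $r>C(h+2+\theta)$. What your version buys is an explicit $\eta$, with no continuity step, and lighter algebra. To leading order both arguments need only about $r>4h^3+(10+\tfrac14)h^2$, so neither loses anything relative to the stated threshold. In a final write-up, drop the abandoned trials ($\theta=1$, $\theta=1/4$, $\theta=c/h$) and state the two one-line checks above.
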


\begin{proof}
Setting $x=1/(2C)-1/r$. 
Since $$4v(H)^3+11v(H)^2-2C=4v(H)^3+3v(H)^2-4v(H)+8>0,$$ we have $r>2C$ and $x>0$. 
We first show that $2x-(v(H)-2)x^2>v(H)/r$. 
Since $0<x<1/(2C)$, we have
$$2x-(v(H)-2)x^2=2x\left(1-\frac{v(H)-2}{2}x\right)\ge2x\left(1-\frac{v(H)-2}{4C}\right).$$
The last expression is larger than $v(H)/r$ when
\begin{equation*}
r>2C+\frac{4C^2v(H)}{4C-v(H)+2}.
\end{equation*}
Note that
\begin{eqnarray*}
&(4v(H)^3+11v(H)^2-2C)(4C-v(H)+2)-4C^2v(H)\\
=&12v(H)^4+13v(H)^3+122v(H)^2+48v(H)-112>0,
\end{eqnarray*}
where the last inequality holds for $v(H)\ge2$ because $12v(H)^4-112>0$ and all remaining terms are positive.
Thus $r\geq 4v(H)^3+11v(H)^2>2C+4C^2v(H)/(4C-v(H)+2)$.
It implies that $2x-(v(H)-2)x^2>v(H)/r$. 

By continuity, we may choose $0<\eta<x$ sufficiently close to $x$ that the first inequality holds. 
Since $\eta+1/r<1/(2C)$, a number $\beta$ satisfying the second inequalities also exists.
\end{proof}

For the rest of the proof of Theorem~\ref{thm:cubic}, 
fix $\eta$ and $\beta$ as in Proposition~\ref{cub:prop:parameters}, and let
\begin{equation}\label{cub:eq:sigma-lambda}
\sigma:=2\eta-(v(H)-2)\eta^2-\frac{v(H)}{r}>0\quad \text{and} \quad \lambda:=\frac12-C\beta>0.
\end{equation}

\section{Proof of Theorems~\ref{thm:cubic} and~\ref{thm:alln}}\label{sec:cubic}

We will repeatedly use the following  Zykov symmetrization. 
Delete a vertex $v$ and add a new vertex $v'$ whose neighborhood is $N_G(v_0)\setminus\{v\}$. 
The new vertex is not adjacent to $v_0$. 
If the original graph is $K_{r+1}$-free, then so is the new graph: a clique containing $v'$ cannot contain $v_0$, and replacing $v'$ by $v_0$ would give a clique of the same size in the original graph. 
Moreover, at most $v(H)(v(H)-1)(n-2)_{v(H)-2}\le v(H)^2n^{v(H)-2}$ injective homomorphisms contain both $v$ and $v_0$. Therefore, if the new graph is denoted by $G'$, then
\begin{equation}\label{cub:eq:symmetrization-gain}
\inj(H,G')-\inj(H,G)\ge I_G(v_0)-I_G(v)-v(H)^2n^{v(H)-2}.
\end{equation}

We divide this section into three parts: the proof of Tur\'an-goodness, Tur\'an-stability, and the proof of Theorem \ref{thm:alln}.
Throughout this section, we assume that \(H\) has no isolated vertices.

\subsection{Tur\'an-goodness}

\begin{proof}[Proof of the Tur\'an-goodness statement in Theorem~\ref{thm:cubic}]
Let $G$ be an $n$-vertex $K_{r+1}$-free graph maximizing $\inj(H,G)$. We first show that $G$ is $\eta$-dense when $n$ is sufficiently large.

Suppose that $d_G(v)\le(1-\eta)n$. 
Then $x_v\ge\eta_n:=(\eta n-1)/(n-1)$.
Let $v_0$ maximize $I_G(v_0)$. 
Combining Lemmas~\ref{cub:lem:vertex-upper} and \ref{cub:lem:turan-lower} with the monotonicity of $\Psi$, we have
$$I_G(v_0)-I_G(v)\ge e(H)\left(2\eta_n-(v(H)-2)\eta_n^2-\frac{v(H)n}{r(n-1)}\right)\Gamma_n.$$
The expression in parentheses tends to $\sigma$ as $n\to\infty$, and $\Gamma_n/n^{v(H)-1}\to1$. 
Hence, for all sufficiently large $n$, it is at least $\sigma/2$ and $v(H)^2n^{v(H)-2}\le e(H)\sigma \Gamma_n/4$. 
Let $G'$ be the graph obtained from $G$ by symmetrization.
By \eqref{cub:eq:symmetrization-gain}, $G'$ satisfies
$$\inj(H,G')-\inj(H,G)\ge I_G(v_0)-I_G(v)-v(H)^2n^{v(H)-2}\ge e(H)\sigma \Gamma_n/4>0,$$
contradicting the choice of $G$. Thus $G$ is $\eta$-dense.

Let $Q$ be a spanning $r$-partite subgraph of $G$ with the maximum number of edges. Theorem~\ref{cub:thm:furedi} implies
\begin{equation}\label{cub:eq:furedi-L}
e(G)-e(Q)\le e(T_r(n))-e(G).
\end{equation}
By Lemma~\ref{cub:lem:max-cut}, $d_Q(v)\ge(1-1/r)d_G(v)\ge(1-1/r)(1-\eta)n>(1-\beta)n$, 
so $Q$ is $\beta$-dense. 
By \eqref{cub:eq:furedi-L}, we conclude $e(T_r(n))-e(Q)=e(T_r(n))-e(G)+e(G)-e(Q)\ge2(e(G)-e(Q))$.

For each edge of $G-Q$ and each of the $2e(H)$ oriented edges of $H$, at most $(n-2)_{v(H)-2}\le n^{v(H)-2}$ injective maps send that oriented edge to the chosen host edge. 
A union bound shows that deleting the $e(G)-e(Q)$ edges of $G-Q$ destroys at most $2e(H)(e(G)-e(Q))n^{v(H)-2}$ injective homomorphisms. 
Therefore $\inj(H,Q)\ge\inj(H,G)-2e(H)(e(G)-e(Q))n^{v(H)-2}$. 
It follows from Lemma~\ref{cub:lem:dense-rpartite} that
\begin{eqnarray*}
&&\inj(H,T_r(n))-\inj(H,G)\\
&\ge&2e(H)\big((1-C\beta)(e(T_r(n))-e(Q))-(e(G)-e(Q))\big)n^{v(H)-2}\\
&\ge&2e(H)((e(T_r(n))-e(Q))/2-(e(G)-e(Q)))n^{v(H)-2}\\
&\ge&0.  
\end{eqnarray*}
Thus $T_r(n)$ is extremal for every sufficiently large $n$.
\end{proof}

\subsection{Tur\'an-stability}

We first relate the edge deficit of a maximum $r$-cut to edit distance.

\begin{lemma}\label{cub:lem:edit-deficit}
Let $G$ be an $n$-vertex $K_{r+1}$-free graph, and let $Q$ be a maximum spanning $r$-partite subgraph of $G$. Then
\begin{equation*}
\ed(G,T_r(n))\le \frac32\bigl(e(T_r(n))-e(Q)\bigr)+n\sqrt{r\bigl(e(T_r(n))-e(Q)\bigr)+\frac{r^2}{4}}.
\end{equation*}
\end{lemma}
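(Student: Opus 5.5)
We will bound the edit distance by passing through $Q$. Write $D:=e(T_r(n))-e(Q)\ge0$. Let $V_1,\dots,V_r$ be the parts of $Q$ with sizes $n_1,\dots,n_r$, and let $K$ be the complete $r$-partite graph on these parts. Then
\[
\ed(G,T_r(n))\le |E(G)\triangle E(Q)|+|E(Q)\triangle E(K)|+\ed(K,T_r(n)),
\]
and we estimate the three terms separately.

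\textbf{First term.} $Q\subseteq G$, so this term is $e(G)-e(Q)$. By Theorem~\ref{cub:thm:furedi}, $e(G)-e(Q)\le e(T_r(n))-e(G)$. Adding $e(G)-e(Q)$ to both sides gives $2(e(G)-e(Q))\le D$, so the first term is at most $D/2$.

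\textbf{Second term.} $Q\subseteq K$, so this term equals $e(K)-e(Q)$. Since $e(K)\le e(T_r(n))$ by Turán's theorem, this is at most $D$. Together the first two terms give the $\frac32 D$ in the statement.

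\textbf{Third term.} It remains to show $\ed(K,T_r(n))\le n\sqrt{rD+r^2/4}$, and this is the main obstacle. Let $m_1,\dots,m_r$ be the Turán part sizes. Match the parts of $K$ to those of $T_r(n)$ in sorted order and choose an identification of vertices so that part $V_i$ overlaps the target part in $\min(n_i,m_i)$ vertices. The number of moved vertices is then
\[
M=\tfrac12\sum_i|n_i-m_i|.
\]
Moving one vertex between parts changes at most $n-1$ edges. Hence $\ed(K,T_r(n))\le Mn$, and it suffices to show $M\le\sqrt{rD+r^2/4}$.

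To do this, write $n_i=n/r+t_i$ with $\sum_i t_i=0$. Then
\[
e(K)=\tfrac12\Big(n^2-\sum_i n_i^2\Big)=\tfrac12\Big(n^2-\tfrac{n^2}{r}\Big)-\tfrac12\sum_i t_i^2 .
\]
The Turán graph satisfies $e(T_r(n))\ge \frac12(n^2-n^2/r)-r/8$, since its own rounding deviations are at most $1/2$ each. Therefore
\[
\sum_i t_i^2\le 2\big(e(T_r(n))-e(K)\big)+\tfrac r4\le 2D+\tfrac r4 .
\]
Next, $\sum_i|n_i-m_i|\le\sum_i|t_i|+r/2$, and by Cauchy--Schwarz $\sum_i|t_i|\le\sqrt{r\sum_i t_i^2}$. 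This yields
\[
M\le\tfrac12\sqrt{r(2D+r/4)}+\tfrac r4 .
\]
One then checks that this is at most $\sqrt{rD+r^2/4}$. Squaring, the requirement becomes
\[
\tfrac14 r(2D+r/4)+\tfrac r2\sqrt{\cdots}+\tfrac{r^2}{16}\le rD+\tfrac{r^2}{4},
\]
which should follow from AM--GM on the cross term.

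If these constants turn out too tight, the fallback is to use $M\le\sum_{i:\,n_i>m_i}(n_i-m_i)$ directly. In that case the estimate $\sum_i(n_i-m_i)^2\le 2D+O(r)$ would come from the exact formula
\[
e(T_r(n))-e(K)=\tfrac12\Big(\sum_i n_i^2-\sum_i m_i^2\Big),
\]
together with a convexity comparison against the balanced sizes. Cauchy--Schwarz over at most $r$ indices would then give $M\le\sqrt{rD+r^2/4}$ with room to spare.
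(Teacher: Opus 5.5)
Your proposal is correct and follows essentially the same route as the paper: the $\frac32 D$ term comes from F\"uredi's theorem (transferred to $Q$ via its maximality) together with $e(K)\le e(T_r(n))$, and the rebalancing cost $n\cdot\frac12\sum_i|n_i-m_i|$ is controlled by Cauchy--Schwarz; your AM--GM check does close, since $\sqrt{\tfrac r4\bigl(2D+\tfrac r4\bigr)}\le D+\tfrac r4$. The only slip is the claim that the Tur\'an rounding deviations $|m_i-n/r|$ are at most $1/2$, since they can be as large as $1-1/r$; the two bounds you actually need still hold, because writing $n=ar+b$ gives $\sum_i(m_i-n/r)^2=b(r-b)/r\le r/4$ and $\sum_i|m_i-n/r|=2b(r-b)/r\le r/2$.
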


\begin{proof}
Let $V_1,\ldots,V_r$ be the parts of $Q$, and let $P$ be the complete $r$-partite graph with these parts. 
By the maximality of $Q$, every edge of $G$ joining two different parts belongs to $Q$.
Hence $E(Q)=E(G)\cap E(P)$.
Theorem~\ref{cub:thm:furedi}, together with the maximality of $Q$, gives $e(G)-e(Q)\le e(T_r(n))-e(G)$.
So we have $e(G)-e(Q)\le \frac12\bigl(e(T_r(n))-e(Q)\bigr)$. 
Since $e(P)\le e(T_r(n))$, we also have $e(P)-e(Q)\le e(T_r(n))-e(Q)$. 
It follows that
\begin{equation}\label{cub:eq:G-to-P}
|E(G)\triangle E(P)|=e(G)-e(Q)+e(P)-e(Q)\le \frac32\bigl(e(T_r(n))-e(Q)\bigr).
\end{equation}

We write $x_i=|V_i|$ for $1\le i\le r$, and choose integers $y_i\in\{\lfloor n/r\rfloor,\lceil n/r\rceil\}$ such that $\sum_{i=1}^r y_i=n$. 
Since $\sum_{i=1}^r x_i=\sum_{i=1}^r y_i=n$, we deduce
$$2\bigl(e(T_r(n))-e(P)\bigr)=\sum_{i=1}^r x_i^2-\sum_{i=1}^r y_i^2=\sum_{i=1}^r\left(x_i-\frac nr\right)^2-\sum_{i=1}^r\left(y_i-\frac nr\right)^2.$$
Let $n=ar+b$, where $0\le b<r$. 
We obtain that $\sum_{i=1}^r(y_i-n/r)^2=b(1-b/r)^2+(r-b)(b/r)^2=b(r-b)/r\le r/4$. 
Hence
$$\sum_{i=1}^r(x_i-y_i)^2\le 2\sum_{i=1}^r\left(x_i-\frac nr\right)^2+2\sum_{i=1}^r\left(y_i-\frac nr\right)^2\le 4\bigl(e(T_r(n))-e(P)\bigr)+r.$$

A balanced partition with part sizes $y_1,\ldots,y_r$ can be obtained by moving exactly $\frac12\sum_{i=1}^r|x_i-y_i|$ vertices. By the Cauchy--Schwarz inequality,
$$\left(\frac12\sum_{i=1}^r|x_i-y_i|\right)^2\le \frac r4\sum_{i=1}^r(x_i-y_i)^2\le r\bigl(e(T_r(n))-e(P)\bigr)+\frac{r^2}{4}\le r\bigl(e(T_r(n))-e(Q)\bigr)+\frac{r^2}{4}.$$
Thus $P$ can be changed into a copy of $T_r(n)$ by moving at most $\sqrt{r\bigl(e(T_r(n))-e(Q)\bigr)+r^2/4}$ vertices. 
Every pair whose edge status changes has at least one moved endpoint, 
so this changes at most $n\sqrt{r\bigl(e(T_r(n))-e(Q)\bigr)+r^2/4}$ edges. 
Combining this bound with \eqref{cub:eq:G-to-P} establishes the expected result.
\end{proof}

\begin{proof}[Proof of the stability statement in Theorem~\ref{thm:cubic}]
Fix $0<\varepsilon\le 1$. The Tur\'an-goodness statement has already been proved. Hence there exists $n_0$ such that
$\ex(n,H,K_{r+1})=\N(H,T_r(n))$ for every $n\ge n_0$.

Choose $\delta_0>0$ such that
$$\delta_0\le \min\left\{\frac{e(H)\sigma}{16v(H)},\frac{\varepsilon e(H)\sigma}{8},\frac{e(H)\lambda\varepsilon^2}{400r}\right\},$$
and set $\delta=\delta_0/|\operatorname{Aut}(H)|$. We take $n$ sufficiently large, in particular $n\ge n_0$.
Let $G$ be an $n$-vertex $K_{r+1}$-free graph satisfying
$\N(H,G)\ge \ex(n,H,K_{r+1})-\delta n^{v(H)}.$
Since $\inj(H,F)=|\operatorname{Aut}(H)|\N(H,F)$ for every graph $F$, we obtain
\begin{equation*}
\inj(H,G)
\ge |\operatorname{Aut}(H)|\ex(n,H,K_{r+1})-|\operatorname{Aut}(H)|\delta n^{v(H)}
=\inj(H,T_r(n))-\delta_0n^{v(H)}.
\end{equation*}

We now use the symmetrization process. Suppose that the current graph $G'$ satisfies
$\inj(H,G')\ge\inj(H,T_r(n))-\delta_0n^{v(H)}$ and is not $\eta$-dense. Then there is a vertex $v$ such that $d_{G'}(v)\le(1-\eta)n$. Choose a vertex $v_0$ for which $I_{G'}(v_0)$ is maximum, and symmetrize $v$ to $v_0$. Applying Lemma~\ref{cub:lem:vertex-upper}, Lemma~\ref{cub:lem:turan-lower}, and \eqref{cub:eq:symmetrization-gain}, the increase in the number of injective homomorphisms is at least
$$e(H)\left(2\eta_n-(v(H)-2)\eta_n^2-\frac{v(H)n}{r(n-1)}\right)\Gamma_n-v(H)\delta_0n^{v(H)-1}-v(H)^2n^{v(H)-2},$$
where $\eta_n=(\eta n-1)/(n-1)$.

Taking $n$ sufficiently large such that
$$2\eta_n-(v(H)-2)\eta_n^2-\frac{v(H)n}{r(n-1)}\ge\frac{3\sigma}{4},
\qquad
\Gamma_n\ge\frac12n^{v(H)-1},$$
and $v(H)^2n^{v(H)-2}\le e(H)\sigma\Gamma_n/8$. Since $\delta_0\le e(H)\sigma/(16v(H))$, we also have
$v(H)\delta_0n^{v(H)-1}\le e(H)\sigma\Gamma_n/8$. Therefore every symmetrization step increases the number of injective homomorphisms by at least
\begin{equation}\label{cub:eq:symmetrization-step-gain}
\frac{e(H)\sigma}{4}n^{v(H)-1}.
\end{equation}
In particular, the new graph still satisfies the same lower bound on its number of injective homomorphisms. Thus the process can be continued until an $\eta$-dense graph is obtained.

Every graph obtained during the process is $K_{r+1}$-free. Since $n\ge n_0$, the Tur\'an-goodness statement shows that its number of injective homomorphisms is at most $\inj(H,T_r(n))$. The initial difference from this upper bound is at most $\delta_0n^{v(H)}$. It follows from \eqref{cub:eq:symmetrization-step-gain} that the process has at most $4\delta_0n/(e(H)\sigma)$ steps. Each step changes at most $n$ adjacencies. Hence the resulting $\eta$-dense graph $G^*$ satisfies $\inj(H,G^*)\ge\inj(H,G)$ and
$$\ed(G^*,T_r(n))\ge \ed(G,T_r(n))-\frac{4\delta_0}{e(H)\sigma}n^2.$$
Since $\delta_0\le\varepsilon e(H)\sigma/8$, if $\ed(G,T_r(n))>\varepsilon n^2$, then
$\ed(G^*,T_r(n))>\varepsilon n^2/2$.

Suppose, for the sake of contradiction, that $\ed(G,T_r(n))>\varepsilon n^2$. Let $Q$ be a spanning $r$-partite subgraph of $G^*$ with the maximum number of edges. By Lemma~\ref{cub:lem:max-cut}, the graph $Q$ is $\beta$-dense. We claim that
\begin{equation}\label{cub:eq:partite-deficit-lower}
e(T_r(n))-e(Q)\ge \frac{\varepsilon^2}{400r}n^2.
\end{equation}
Suppose that this inequality is false. Lemma~\ref{cub:lem:edit-deficit} and the inequality $\sqrt{a+b}\le\sqrt a+\sqrt b$ give
$$\ed(G^*,T_r(n))<
\frac{3\varepsilon^2}{800r}n^2+\frac{\varepsilon}{20}n^2+\frac r2n
<\frac{\varepsilon}{2}n^2$$
for $n\ge10r/\varepsilon$. This contradicts $\ed(G^*,T_r(n))>\varepsilon n^2/2$, and proves \eqref{cub:eq:partite-deficit-lower}.

Since $Q$ has the maximum number of edges among all spanning $r$-partite subgraphs of $G^*$, Theorem~\ref{cub:thm:furedi} gives
$e(G^*)-e(Q)\le e(T_r(n))-e(G^*).$
Consequently, we obtain
$$e(T_r(n))-e(Q)
=e(T_r(n))-e(G^*)+e(G^*)-e(Q)
\ge2\bigl(e(G^*)-e(Q)\bigr).$$

Deleting the $e(G^*)-e(Q)$ edges in $E(G^*)\setminus E(Q)$ destroys at most
$2e(H)\bigl(e(G^*)-e(Q)\bigr)n^{v(H)-2}$ injective homomorphisms, because each deleted edge can be the image of at most $2e(H)n^{v(H)-2}$ injective homomorphisms from $H$. Hence
$$\inj(H,G^*)-\inj(H,Q)
\le2e(H)\bigl(e(G^*)-e(Q)\bigr)n^{v(H)-2}.$$
It follows from Lemma~\ref{cub:lem:dense-rpartite} that
\begin{equation*}
\inj(H,T_r(n))-\inj(H,G^*)\ge2e(H)\left((1-C\beta)\bigl(e(T_r(n))-e(Q)\bigr)-\bigl(e(G^*)-e(Q)\bigr)\right)n^{v(H)-2}.
\end{equation*}
By the preceding edge inequality and \eqref{cub:eq:sigma-lambda}, the expression in parentheses is at least
$\lambda\bigl(e(T_r(n))-e(Q)\bigr)$. 
Therefore, by \eqref{cub:eq:partite-deficit-lower}, we get
$$\inj(H,T_r(n))-\inj(H,G^*)
\ge2e(H)\lambda\bigl(e(T_r(n))-e(Q)\bigr)n^{v(H)-2}
\ge\frac{e(H)\lambda\varepsilon^2}{200r}n^{v(H)}.$$

On the other hand, $\inj(H,G^*)\ge\inj(H,G)$, and thus
$$\inj(H,T_r(n))-\inj(H,G^*)\le\delta_0n^{v(H)}.$$
This contradicts $\delta_0\le e(H)\lambda\varepsilon^2/(400r)$. Therefore every $n$-vertex $K_{r+1}$-free graph $G$ satisfying
$\N(H,G)\ge\ex(n,H,K_{r+1})-\delta n^{v(H)}$ has
$\ed(G,T_r(n))\le\varepsilon n^2$. This proves that $H$ is $K_{r+1}$-Tur\'an-stable.
\end{proof}

\subsection{Proof of Theorem~\ref{thm:alln}}
\begin{proof}[Proof of Theorem~\ref{thm:alln}]
The statement is immediate when $n\le r$, since $T_r(n)=K_n$. 
Assume that $n\ge r+1$ and $r\ge 40v(H)^3$, and let $G$ maximize $\inj(H,G)$ among all $n$-vertex $K_{r+1}$-free graphs.

Suppose that $d_G(v)\le (1-4v(H)/r)n$ for some vertex $v$. 
Then $x_v\ge (4v(H)-1)/r$, because $(4v(H)n/r-1)/(n-1)\ge (4v(H)-1)/r$ for $n\ge r+1$. 
Choose a vertex $v_0$ for which $I_G(v_0)$ is maximum, and let $G'$ be obtained by symmetrizing $v$ to $v_0$. As above,
$$I_G(v_0)-I_G(v)\ge e(H)\left(\frac{2(4v(H)-1)}{r}-\frac{(v(H)-2)(4v(H)-1)^2}{r^2}-\frac{v(H)n}{r(n-1)}\right)\Gamma_n.$$
Since $n/(n-1)\le 1+1/r$ and $(v(H)-2)(4v(H)-1)^2+v(H)\le 16v(H)^3$, the expression in parentheses is at least
$$\frac{7v(H)-2}{r}-\frac{(v(H)-2)(4v(H)-1)^2+v(H)}{r^2}\ge \frac{7v(H)-2-2/5}{r}\ge \frac{5v(H)}{r}.$$
Moreover,
$$\Gamma_n=n^{v(H)-1}\prod_{i=1}^{v(H)-1}\left(1-\frac{i}{n}\right)\ge n^{v(H)-1}\left(1-\frac{v(H)(v(H)-1)}{2n}\right)\ge \frac{159}{160}n^{v(H)-1},$$
where we based on $n\ge r+1\ge 40v(H)^3$ and the inequality $\prod_i(1-z_i)\ge 1-\sum_i z_i$ for $z_i\in[0,1]$. 
Since $e(H)\ge v(H)/2$, we deduce from \eqref{cub:eq:symmetrization-gain} that
$$\inj(H,G')-\inj(H,G)\ge \left(\frac{795}{320}\frac{n}{r}-1\right)v(H)^2n^{v(H)-2}>0,$$
contradicting the choice of $G$. Therefore $G$ is $(4v(H)/r)$-dense.

Let $Q$ be a maximum spanning $r$-partite subgraph of $G$. 
Using Lemma~\ref{cub:lem:max-cut}, the graph $Q$ is $(4v(H)+1)/r$-dense. Clearly, $0<(4v(H)+1)/r\le 1/4$. We also have $C(4v(H)+1)/r<1/2$. Indeed,
$$20v(H)^3-C(4v(H)+1)=4v(H)^3-12v(H)^2+14v(H)+4>0.$$
For $v(H)=2$, this is immediate, while for $v(H)\ge 3$, the expression on the right equals $4v(H)^2(v(H)-3)+14v(H)+4>0$. Hence $C(4v(H)+1)<20v(H)^3\le r/2$.

By Theorem~\ref{cub:thm:furedi}, $e(G)-e(Q)\le e(T_r(n))-e(G)$, and therefore
$$e(T_r(n))-e(Q)=e(T_r(n))-e(G)+e(G)-e(Q)\ge 2\bigl(e(G)-e(Q)\bigr).$$
Deleting the edges in $E(G)\setminus E(Q)$ destroys at most $2e(H)(e(G)-e(Q))n^{v(H)-2}$ injective homomorphisms, 
because for each deleted edge and each of the $2e(H)$ oriented edges of $H$, 
there are at most $n^{v(H)-2}$ injective maps sending that oriented edge to the deleted edge. 
We obtain from Lemma~\ref{cub:lem:dense-rpartite} that
\begin{eqnarray*}
&&\inj(H,T_r(n))-\inj(H,G)\\
&\ge& 2e(H)\left(\left(1-\frac{C(4v(H)+1)}{r}\right)\bigl(e(T_r(n))-e(Q)\bigr)-\bigl(e(G)-e(Q)\bigr)\right)n^{v(H)-2}.
\end{eqnarray*}
Since $C(4v(H)+1)/r<1/2$ and $e(T_r(n))-e(Q)\ge 2(e(G)-e(Q))$, we get
$$\inj(H,T_r(n))-\inj(H,G)\ge 2e(H)\left(\frac12\bigl(e(T_r(n))-e(Q)\bigr)-\bigl(e(G)-e(Q)\bigr)\right)n^{v(H)-2}\ge 0.$$
Thus $\inj(H,G)\le \inj(H,T_r(n))$ for every $n$, as required.
\end{proof}

\section{Proof of Theorem~\ref{thm:counterexamples}}\label{sec:counterexamples}

In this section, we will prove Theorem~\ref{thm:counterexamples}. 
We first show that $H_{r,s,\ell}$ is $K_r$-Tur\'an-good. 
We then determine its proper $(r-1)$-colorings and proper $r$-colorings exactly and use these counts to prove that it is not $K_{r+1}$-Tur\'an-good.

\subsection{Properties of $H_{r,s,\ell}$}

Two copies of $K_q$ in a graph are called adjacent if they have exactly $q-1$ common vertices. The $q$-skeleton of a connected component is connected if, for any two vertices $x$ and $y$ in that component, there are copies $Q_1,\ldots,Q_m$ of $K_q$ such that $x\in V(Q_1)$, $y\in V(Q_m)$, and $Q_i$ and $Q_{i+1}$ are adjacent for every $1\leq i<m$.

\begin{theorem}[Gy\H{o}ri, Pach, and Simonovits \cite{Gyori1991}]\label{ce:thm:Gyori1991}
Let $q\geq2$, and let $F$ be a $q$-partite graph on $m\geq q$ vertices. Suppose that $F$ contains $\lfloor m/q\rfloor$ vertex-disjoint copies of $K_q$, and that the $q$-skeleton of every connected component of $F$ is connected. Then, for every $n$-vertex $K_{q+1}$-free graph $X$, we have $\mathcal N(F,X)\leq\mathcal N(F,T_q(n))$.
\end{theorem}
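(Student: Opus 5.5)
The plan is to follow the two-stage scheme that Győri, Pach, and Simonovits used: first show that some extremal graph is complete $q$-partite, and then show that among complete $q$-partite graphs the balanced one maximises $\mathcal N(F,\cdot)$. Both hypotheses on $F$ are used only in the second stage, and they are what make the count in a complete $q$-partite host explicit.

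\emph{Stage 1 (reduction to complete multipartite hosts).} Let $X$ be an $n$-vertex $K_{q+1}$-free graph maximising $\mathcal N(F,X)$. For a vertex $u$, let $c(u)$ be the number of copies of $F$ containing $u$. For nonadjacent $u,v$, let $c(u,v)$ be the number of copies containing both. I will use Zykov symmetrization, which preserves $K_{q+1}$-freeness as noted in Section~\ref{sec:cubic}. Replacing $v$ by a twin $u'$ of $u$ (nonadjacent to $u$) destroys $c(v)$ copies and creates at least $c(u)-c(u,v)$ copies through $u'$ alone. The correction term must be controlled by comparing with the copies containing both $u$ and $u'$. For this comparison I would pair each copy through both $u$ and $v$ with copies through both $u$ and $u'$ using the swap $u\leftrightarrow u'$.

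Following Zykov, I would run this on classes of the relation ``nonadjacent and with equal neighbourhoods''. The aim is to reach a graph in which non-adjacency is an equivalence relation, i.e.\ a complete multipartite graph, with at most $q$ parts because $X$ is $K_{q+1}$-free. At each step I would choose the direction of cloning ($u\to v$ or $v\to u$) with $c(u)\ge c(v)$ after the correction, so that $\mathcal N(F,\cdot)$ does not decrease. I expect this to be the main obstacle. The naive inequality $c(u)-c(u,v)\ge c(v)$ can fail, because copies of $F$ may use two nonadjacent vertices. I would first try the symmetric averaging trick: compare the sum of the two possible clonings, $u\to v$ and $v\to u$, with twice the original count. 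In both clonings the copies using both vertices are counted in the same way, so one of the two clonings is at least as good.

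\emph{Stage 2 (balancing).} Let $X$ be complete $q$-partite with parts $V_1,\dots,V_q$. A copy of $F$ in $X$ induces a proper $q$-colouring of $F$, given by which part each vertex lies in. The $q$-skeleton condition implies that, on each component of $F$, the colouring is determined up to a permutation of colours by any one $K_q$ in that component. Hence
\[
\mathcal N(F,X)=\frac{1}{|\mathrm{Aut}(F)|}\sum_{\pi}\prod_{i=1}^q \bigl(|V_i|\bigr)_{m_i(\pi)},
\]
where $\pi$ ranges over the (finitely many, explicitly described) proper $q$-colourings of $F$, and $m_i(\pi)$ is the size of the $i$-th colour class of $\pi$.

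Next, suppose $|V_1|\ge |V_2|+2$, and move one vertex from $V_1$ to $V_2$. I would pair each colouring $\pi$ with the colouring $\pi'$ obtained by swapping colours $1$ and $2$ on some components. The hypothesis that $F$ contains $\lfloor m/q\rfloor$ vertex-disjoint copies of $K_q$ forces every colour class to have size $\lfloor m/q\rfloor$ or $\lceil m/q\rceil$. Hence in each pair the class sizes $m_1$ and $m_2$ differ by at most one. The comparison then reduces to the elementary inequality
\[
(a)_s(b)_t+(a)_t(b)_s\le (a-1)_s(b+1)_t+(a-1)_t(b+1)_s \quad\text{for } a\ge b+2,\ |s-t|\le 1 .
\]
This inequality follows from log-concavity of falling factorials.

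Iterating the balancing step reaches $T_q(n)$ without decreasing $\mathcal N(F,\cdot)$. Together with Stage~1, this gives $\mathcal N(F,X)\le\mathcal N(F,T_q(n))$ for every $n$-vertex $K_{q+1}$-free graph $X$.
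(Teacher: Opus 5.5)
The paper does not prove this statement; it only quotes it from Gy\H{o}ri, Pach and Simonovits. Your proposal therefore has to stand on its own, and it has a genuine gap in each stage. \emph{Stage 1:} for nonadjacent $u,v$, let $X_{u\to v}$ be the graph in which $v$ is made a twin of $u$. Splitting copies according to whether they avoid $v$, contain $v$ but not $u$, or contain both gives
\[
\mathcal N(F,X_{u\to v})+\mathcal N(F,X_{v\to u})-2\mathcal N(F,X)=d_u+d_v-2c(u,v),
\]
where $d_u$ (resp.\ $d_v$) counts copies through both vertices after both are given the neighbourhood $N_X(u)$ (resp.\ $N_X(v)$). These terms are not ``counted in the same way''. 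Write a copy through $u,v$ as $a\mapsto u$, $b\mapsto v$ plus an embedding $\phi$ of $F-\{a,b\}$, and let $\alpha_w(a)=1$ iff $\phi(N_F(a))\subseteq N_X(w)$. Then the right-hand side is a positive multiple of $\sum(\alpha_u(a)-\alpha_v(a))(\alpha_u(b)-\alpha_v(b))$. This is a sum of squares when nonadjacent vertices of $F$ are twins, i.e.\ when $F$ is complete multipartite, but it can be negative otherwise. For $F=P_4$, $q=2$, $X=P_4$ and $u,v$ its endpoints, both clonings give $K_{1,3}$, so the count drops from $1$ to $0$ in either direction. Your trick therefore covers complete multipartite $F$ such as $G_{r,s,t}$, but not $H_{r,s,\ell}$ or $J_{r,s,d}$, which are exactly where the paper needs this theorem. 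For such $F$ you need a genuinely different way to pass from an arbitrary $K_{q+1}$-free host to complete $q$-partite ones. Your remark that both hypotheses are used only in Stage 2 signals that this idea is missing.

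\emph{Stage 2:} the $\lfloor m/q\rfloor$ disjoint copies of $K_q$ only force every colour class to have size at least $t=\lfloor m/q\rfloor$. The leftover $m-qt\le q-1$ vertices may all lie in one class, so $|s-t|\le1$ can fail for $q\ge3$. Your pair inequality then fails too, e.g.\ $(4)_3(2)_1+(4)_1(2)_3=48>36=(3)_3(3)_1+(3)_1(3)_3$. In fact the statement as transcribed is false. Take $q=4$ and $F=K_{1,1,1,4}$: it contains $1=\lfloor 7/4\rfloor$ copy of $K_4$, and its four copies of $K_4$ pairwise share three vertices, so all hypotheses hold. Yet, using $\mathcal N(F,K_{n_1,\dots,n_4})=\sum_k\binom{n_k}{4}\prod_{i\ne k}n_i$,
\[
\mathcal N(F,K_{30,10,10,10})=29{,}295{,}000>18{,}427{,}500=\mathcal N(F,T_4(60)).
\]
At part proportions $(\frac12,\frac16,\frac16,\frac16)$ the ratio tends to $320/243$, so this is not a small-$n$ effect. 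For $q=3$, $F=K_{1,1,3}$ also satisfies the hypotheses and is itself $K_4$-free on $5$ vertices, while $T_3(5)=K_{1,2,2}$ contains no copy. An extra hypothesis is needed: every proper $q$-colouring of $F$ should have class sizes differing by at most one. This holds automatically for $q=2$ and for $H_{r,s,\ell}$, $G_{r,s,t}$ and $J_{r,s,d}$. Under it your balancing step is correct: pair colourings by a global swap of colours $1,2$ and use $(a)_{t+1}(b)_t+(a)_t(b)_{t+1}=(a)_t(b)_t(a+b-2t)$, which reduces to the comparison in Lemma~\ref{claw:lem:two-part}. Stage~1 would still be missing for non-multipartite $F$.
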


\begin{lemma}\label{ce:lem:H-good}
Let $r\geq3$, $s\geq4$, and $0\leq\ell\leq s$. Then $H_{r,s,\ell}$ is $K_r$-Tur\'an-good.
\end{lemma}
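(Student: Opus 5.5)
We apply Theorem~\ref{ce:thm:Gyori1991} with $q=r-1$ and $F=H_{r,s,\ell}$. Here $m=v(H_{r,s,\ell})=(r-1)s$ and $q=r-1\ge2$. Once the hypotheses are checked, the theorem gives $\N(H_{r,s,\ell},X)\le\N(H_{r,s,\ell},T_{r-1}(n))$ for every $n$-vertex $K_r$-free graph $X$ and every $n$. This is exactly $K_r$-Tur\'an-goodness, since $\chi(H_{r,s,\ell})=r-1$: it is $(r-1)$-partite and contains $K_{r-1}$, as shown next. So the plan is to verify three things: that $F$ is $q$-partite, that it contains $s$ vertex-disjoint copies of $K_{r-1}$, and that the $(r-1)$-skeleton of $F$ is connected.

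The first point is immediate: the sets $A_1,\dots,A_{r-1}$ are independent in $F$, since deleting edges keeps them independent. For the second, label $A_i=\{a_i^1,\dots,a_i^s\}$ so that the deleted matching is $\{a_1^ja_2^{\sigma(j)}: 1\le j\le\ell\}$ for some injective $\sigma$. Without loss of generality the matching pairs $a_1^j$ with $a_2^j$ for $j\le\ell$. Then $T_j=\{a_1^j,a_2^{j+1},a_3^j,\dots,a_{r-1}^j\}$, with superscripts taken modulo $s$, is a clique provided $a_1^ja_2^{j+1}$ is not a deleted edge. This holds because the only deleted edge at $a_1^j$ goes to $a_2^j$. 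The sets $T_1,\dots,T_s$ are pairwise disjoint, giving $s=\lfloor m/q\rfloor$ disjoint copies of $K_{r-1}$.

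Connectivity of $F$ is clear, since $r-1\ge2$ and $s\ge4$ give plenty of edges between parts. The main work is the skeleton connectivity. I would first show that any two transversal cliques $K=\{x_1,\dots,x_{r-1}\}$ and $K'=\{x_1',\dots,x_{r-1}'\}$, with $x_i,x_i'\in A_i$, are joined by a chain of adjacent cliques. To do this, replace coordinates one at a time, always keeping a clique. For $i\ge3$, replacing $x_i$ by $x_i'$ is always legal, because $A_i$ is complete to everything else. The delicate case is changing the $A_1$ and $A_2$ coordinates, since $x_1x_2$ must remain a non-deleted edge. To get from $(x_1,x_2)$ to $(x_1',x_2')$ in the bipartite graph between $A_1$ and $A_2$, I move one endpoint at a time. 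That bipartite graph is $K_{s,s}$ minus a matching, with $s\ge4$, and it is connected. Moreover, its ``edge graph'', where two edges are adjacent when they share an endpoint, is connected: from $x_1x_2$, move $x_2$ to any $y\ne$ the matching partner of $x_1$, and similarly for the other side. A short case check shows a path exists, using an intermediate vertex avoiding at most two forbidden partners, which is possible since $s\ge4$. Finally, every vertex lies in some $(r-1)$-clique: choose a partner in $A_1$ or $A_2$ avoiding its matching partner, and arbitrary vertices elsewhere. Hence every vertex is reached and the skeleton is connected. With all three hypotheses verified, Theorem~\ref{ce:thm:Gyori1991} completes the proof.
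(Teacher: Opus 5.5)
Your proposal is correct and follows essentially the same route as the paper: you apply the Gy\H{o}ri--Pach--Simonovits theorem with $q=r-1$, exhibit $s$ disjoint transversal cliques by pairing $A_1$ with a shift of $A_2$, and connect the $(r-1)$-skeleton by routing the $A_1$--$A_2$ edge through a vertex $z\in A_2$ that avoids the at most two matched partners. The only differences are cosmetic: you use a full cyclic shift rather than a derangement of $\{1,\dots,\ell\}$, and you join arbitrary pairs of cliques directly rather than through a fixed base clique $Q^*$.
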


\begin{proof}
Let $q=r-1$, and let the parts of $H_{r,s,\ell}$ be $A_1,\ldots,A_q$. 
Without loss of generality, assume that $A_1=\{a_{1,1},\ldots,a_{1,s}\}$ and $A_2=\{a_{2,1},\ldots,a_{2,s}\}$ so that the deleted edges are $a_{1,i}a_{2,i}$ for $1\leq i\leq\ell$. 
For $3\leq j\leq q$, we write $A_j=\{a_{j,1},\ldots,a_{j,s}\}$.

First we will show that the graph contains $s$ vertex-disjoint copies of $K_q$. 
Choose a permutation $\pi$ of $\{1,\ldots,s\}$ satisfying $\pi(i)\neq i$ for every $1\leq i\leq\ell$. 
For $\ell=0$ take the identity permutation; for $\ell=1$ interchange $1$ and $2$; and for $\ell\geq2$ cyclically permute $1,\ldots,\ell$ and fix the other indices. 
For each $1\leq i\leq s$, the set $\{a_{1,i},a_{2,\pi(i)},a_{3,i},\ldots,a_{q,i}\}$ induces a copy of $K_q$, where the last list is absent when $q=2$. 
The only edge that could be missing is the edge between the selected vertices of $A_1$ and $A_2$, and that edge is present by the definition of $\pi$. 
These $s$ copies are vertex-disjoint. 
Since $H_{r,s,\ell}$ has $qs$ vertices, their number is exactly $\lfloor qs/q\rfloor$.

We next prove that the $q$-skeleton is connected. 
Fix an edge $uv$ with $u\in A_1$ and $v\in A_2$, and fix one vertex in each of $A_3,\ldots,A_q$; 
together they form a copy $Q^*$ of $K_q$. Let $Q$ be any copy of $K_q$, and let $x\in A_1$ and $y\in A_2$ be its vertices in the first two parts. 
Each of $x$ and $u$ has at most one nonneighbor in $A_2$. 
Hence at most two vertices of $A_2$ fail to be adjacent to at least one of $x$ and $u$. 
Since $s\geq4$, there is a vertex $z\in A_2$ adjacent to both $x$ and $u$. 
Starting from $Q$, replace $y$ by $z$, then replace $x$ by $u$, and finally replace $z$ by $v$, omitting a replacement that does not change the set. 
Every intermediate set is a copy of $K_q$, and each genuine replacement changes exactly one vertex. 
We may then replace the vertices in $A_3,\ldots,A_q$ one at a time by the corresponding vertices of $Q^*$. 
Thus every copy of $K_q$ can be joined to $Q^*$ by adjacent copies of $K_q$.

Every vertex belongs to a copy of $K_q$. 
A vertex of $A_1$ has a neighbor in $A_2$, and a vertex of $A_2$ has a neighbor in $A_1$; 
after choosing such an edge, choose one vertex from every other part. 
A vertex in $A_j$ with $j\geq3$ can be combined with the fixed edge $uv$ and one vertex from each remaining part. 
Consequently, any two vertices can first be placed in copies of $K_q$ and then joined through $Q^*$. The $q$-skeleton is connected.

All assumptions of Theorem~\ref{ce:thm:Gyori1991} hold, so $T_q(n)=T_{r-1}(n)$ maximizes the number of copies of $H_{r,s,\ell}$ among all $K_{q+1}=K_r$-free graphs. Hence $H_{r,s,\ell}$ is $K_r$-Tur\'an-good.
\end{proof}

For a graph $F$, let $P_F(k)$ be the number of proper vertex colorings of $F$ with the labeled colors $1,\ldots,k$; colors are allowed to be unused.

\begin{lemma}\label{ce:lem:asymptotic}
Let $k$ be fixed. 
As $n$ tends to infinity through multiples of $k$,
$$\mathrm{inj}(F,T_k(n))=\frac{P_F(k)}{k^{v(F)}}n^{v(F)}+O(n^{{v(F)}-1}).$$
\end{lemma}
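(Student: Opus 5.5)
We prove the formula by sorting injective homomorphisms according to which part each vertex lands in. This turns the count into a sum over proper colorings. Write $n=km$, so that $T_k(n)$ has parts $V_1,\ldots,V_k$, each of size exactly $m=n/k$.

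To every injective homomorphism $\varphi:F\to T_k(n)$ we attach the map $c_\varphi:V(F)\to\{1,\ldots,k\}$, where $c_\varphi(x)$ is the index $i$ with $\varphi(x)\in V_i$. Each part is an independent set, so adjacent vertices of $F$ cannot be sent to the same part. Hence $c_\varphi$ is a proper coloring, with unused colors allowed, and the set of injective homomorphisms splits into classes indexed by the $P_F(k)$ proper colorings.

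Fix a proper coloring $c$ and let $n_i=|c^{-1}(i)|$. Consider the injective maps $\varphi$ with $\varphi(x)\in V_{c(x)}$ for every $x$. Any two adjacent vertices of $F$ receive different colors, so their images lie in different parts and are adjacent in $T_k(n)$. Therefore every such map is automatically a homomorphism. Injectivity only has to hold within each part, so the size of this class is
$$\prod_{i=1}^k (m)_{n_i}.$$

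We now estimate this product. Each $n_i\le v(F)$, so $(m)_{n_i}=m^{n_i}+O(m^{n_i-1})$, and since $\sum_i n_i=v(F)$,
$$\prod_{i=1}^k (m)_{n_i}=m^{v(F)}+O\bigl(m^{v(F)-1}\bigr)=\frac{n^{v(F)}}{k^{v(F)}}+O\bigl(n^{v(F)-1}\bigr).$$
The implied constant depends only on $F$ and $k$. Summing over the $P_F(k)$ colorings, a number fixed independently of $n$, gives
$$\inj(F,T_k(n))=\frac{P_F(k)}{k^{v(F)}}n^{v(F)}+O\bigl(n^{v(F)-1}\bigr),$$
which is the claim.

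The only point needing care is the bijection between injective homomorphisms and pairs consisting of a proper coloring $c$ and a part-respecting injection. This holds because $c_\varphi$ is determined uniquely by $\varphi$. Beyond that, the argument is routine.
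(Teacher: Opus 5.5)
Your proposal is correct and follows the paper's proof essentially step for step. Like the paper, you classify injective homomorphisms by the proper $k$-coloring that records which part of $T_k(n)$ each image lies in, count each class as $\prod_{i}(n/k)_{n_i}$, expand this as $n^{v(F)}/k^{v(F)}+O(n^{v(F)-1})$, and sum over the $P_F(k)$ colorings, additionally spelling out why every part-respecting injection is automatically a homomorphism.
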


\begin{proof}
Suppose  the $k$ parts of $T_k(n)$ by $1,\ldots,k$. 
Every injective homomorphism from $F$ to $T_k(n)$ gives a proper $k$-coloring of $F$ by recording the part containing the image of each vertex. 
Conversely, fix a proper coloring whose color class sizes are $m_1,\ldots,m_k$. 
The number of injective homomorphisms inducing this coloring is $\prod_{i=1}^k(n/k)_{m_i}$, where $(x)_a=x(x-1)\cdots(x-a+1)$ and $(x)_0=1$. 
Since $m_1+\cdots+m_k=v(F)$, this product is $n^{v(F)}/k^{v(F)}+O(n^{{v(F)}-1})$. 
Summing over all $P_F(k)$ proper colorings proves the lemma.
\end{proof}

\begin{corollary}\label{ce:cor:criterion}
Let $q\geq2$, and let $F$ be $K_{q+1}$-Tur\'an-good. If
$$\frac{P_F(q)}{q^{v(F)}}>\frac{P_F(q+1)}{(q+1)^{v(F)}},$$
then $F$ is not $K_{q+2}$-Tur\'an-good.
\end{corollary}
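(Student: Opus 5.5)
The plan is to compare the Turán graph $T_{q+1}(n)$ with the graph $T_q(n)$, viewed as an $n$-vertex $K_{q+2}$-free graph: $T_q(n)$ is $K_{q+1}$-free, hence certainly $K_{q+2}$-free. If $F$ were $K_{q+2}$-Turán-good, then for all sufficiently large $n$ we would have $\N(F,X)\le\N(F,T_{q+1}(n))$ for every $n$-vertex $K_{q+2}$-free graph $X$. Multiplying by $|\mathrm{Aut}(F)|$, this gives
\[
\inj(F,T_q(n))\le\inj(F,T_{q+1}(n)).
\]
We will show that this inequality fails for infinitely many large $n$. That contradiction finishes the proof.

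To compare leading terms, choose $n$ running through multiples of $q(q+1)$, so that Lemma~\ref{ce:lem:asymptotic} applies both with $k=q$ and with $k=q+1$. It gives
\[
\inj(F,T_q(n))-\inj(F,T_{q+1}(n))=\left(\frac{P_F(q)}{q^{v(F)}}-\frac{P_F(q+1)}{(q+1)^{v(F)}}\right)n^{v(F)}+O(n^{v(F)-1}).
\]
By hypothesis the coefficient in parentheses is a fixed positive constant. So for all sufficiently large such $n$ the difference is positive, which contradicts the displayed inequality. Hence $F$ is not $K_{q+2}$-Turán-good.

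There is no real obstacle. The only point needing care is that Lemma~\ref{ce:lem:asymptotic} is stated only along multiples of $k$. Restricting to the common multiples $n$ of $q$ and $q+1$ handles this, since there are infinitely many of them. The hypothesis that $F$ is $K_{q+1}$-Turán-good is not needed for the negative conclusion itself. It serves to make the conclusion a genuine counterexample to monotonicity when the corollary is later applied with $q=r-1$ to $H_{r,s,\ell}$.
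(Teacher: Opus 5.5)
Your proposal is correct and follows essentially the same route as the paper: restrict to $n$ divisible by $q(q+1)$, apply Lemma~\ref{ce:lem:asymptotic} with $k=q$ and $k=q+1$ to get $\inj(F,T_q(n))>\inj(F,T_{q+1}(n))$ for large such $n$, and use that $T_q(n)$ is $K_{q+2}$-free. Your remark that the $K_{q+1}$-Tur\'an-good hypothesis plays no role in the negative conclusion is also accurate; the paper's proof does not use it either.
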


\begin{proof}
By Lemma~\ref{ce:lem:asymptotic}, the displayed strict inequality implies $\mathrm{inj}(F,T_q(n))>\mathrm{inj}(F,T_{q+1}(n))$ for all sufficiently large $n$ divisible by $q(q+1)$. Hence $T_q(n)$ contains more copies of $F$ than $T_{q+1}(n)$. Since $T_q(n)$ is $K_{q+2}$-free, $T_{q+1}(n)$ fails to maximize the number of copies of $F$ among $K_{q+2}$-free graphs for infinitely many arbitrarily large $n$. Thus $F$ is not $K_{q+2}$-Tur\'an-good.
\end{proof}

\begin{lemma}\label{ce:lem:H-color}
Let $q\geq2$, $s\geq4$, and $0\leq\ell\leq s$. For $H=H_{q+1,s,\ell}$,
$$P_H(q)=q!\quad \text{and} \quad P_H(q+1)=q!\left[(q+1)(\ell+1)+\binom{q+1}{2}(2^s-2)\right].$$
\end{lemma}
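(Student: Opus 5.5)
The plan is to count colorings directly from the structure of $H=H_{q+1,s,\ell}$. Write $A_1,\dots,A_q$ for its parts, each of size $s$, with the deleted matching $a_{1,i}a_{2,i}$ for $1\le i\le\ell$ as in the proof of Lemma~\ref{ce:lem:H-good}. For a proper coloring $c$, let $S_j=c(A_j)$ be the set of colors used on $A_j$. Every $A_j$ is independent, and every pair of vertices in different parts is adjacent except the $\ell$ matched pairs. Hence an assignment of colors is proper if and only if the sets $S_j$ for $j\ge3$ are pairwise disjoint and disjoint from $S_1\cup S_2$, and any color in $S_1\cap S_2$ is used in $A_1\cup A_2$ only on a matched pair $\{a_{1,i},a_{2,i}\}$.

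To see the last claim, note that the color class of a shared color restricted to $A_1\cup A_2$ is an independent set of the bipartite graph $H[A_1\cup A_2]$ meeting both sides. Since $a_{1,i}$ is non-adjacent only to $a_{2,i}$, and unmatched vertices have no non-neighbors across, this class must be exactly one matched pair. It follows that distinct shared colors correspond to distinct matched pairs.

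\emph{$q$ colors.} Each $S_j$ is nonempty. Suppose $S_1\cap S_2\ne\emptyset$. Then $A_1$ minus the pair still needs a color outside $S_2$, because $s\ge2$ and its vertices are adjacent to every vertex of $A_2$ except their own partner. Counting gives at least $(q-2)+1+1+1=q+1$ colors, which is too many. Hence the $S_j$ are pairwise disjoint nonempty subsets of a $q$-set, so they are distinct singletons. This gives exactly $q!$ colorings, each of which is proper.

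\emph{$q+1$ colors, Case (a): $S_1\cap S_2=\emptyset$.} The $S_j$ are pairwise disjoint and nonempty in $[q+1]$. Either all are singletons with one color unused, giving $(q+1)!=q!(q+1)$ colorings. Or exactly one $S_j$ has two colors and the others are singletons. The number of ordered choices of $(S_1,\dots,S_q)$ in this subcase equals the number of surjections $[q+1]\to[q]$, namely $\binom{q+1}{2}q!$. For each choice, the doubleton part is colored using both colors in $2^s-2$ ways, and the other parts are forced. Any such assignment is proper by the criterion above.

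\emph{$q+1$ colors, Case (b): $S_1\cap S_2\ne\emptyset$.} Let $t\ge1$ be the number of shared colors, each occupying a distinct matched pair. The remaining $s-t$ vertices of $A_1$ need at least one color outside $S_2$, and similarly for $A_2$, provided $s-t\ge1$. If $s-t=0$, then $t=s\ge4$ shared colors alone already exceed the budget. So the count gives $(q-2)+t+1+1\le q+1$, forcing $t=1$. In that case every part other than the pair is monochromatic with its own color, and all $q+1$ colors are distinct and used. We choose the pair $i\in\{1,\dots,\ell\}$ and then a bijection from the $q+1$ roles to the colors, giving $\ell(q+1)!$ colorings.

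Summing the three counts, $q!(q+1)+\ell\,q!(q+1)+\binom{q+1}{2}q!(2^s-2)$, gives exactly the formula in the statement. The step requiring the most care is Case (b). There one must argue precisely that shared colors sit only on matched pairs, and must rule out $t\ge2$, including the degenerate possibility that $A_1$ is entirely covered by matched shared pairs. This is where the hypothesis $s\ge4$ (more precisely $s>\ell$ or $s\ge2$) enters. Double counting between the cases is excluded because Cases (a) and (b) are distinguished by whether $S_1\cap S_2$ is empty.
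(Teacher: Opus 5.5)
Your count is correct, and your case split is the paper's. A color in $S_1\cap S_2$ is exactly a monochromatic deleted pair, so your Cases (a) and (b) are the paper's ``no monochromatic deleted pair'' and ``exactly one monochromatic deleted pair''. They give the same contributions, $(q+1)q!+\binom{q+1}{2}q!(2^s-2)$ and $\ell(q+1)q!$.

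The difference is in how extra monochromatic pairs are excluded. The paper deletes the endpoints of at most two matched pairs and checks that a $K_q$ survives, using $s-2\ge 2$. It then notes that this $K_q$ would have to be colored from at most $q-1$ colors. You instead characterize properness exactly through the color sets $S_j$ and bound the number of colors additively by $(q-2)+t+1+1$. Your version is more explicit and shows exactly where $s\ge 4$ matters, namely in excluding $t=s$. The cost is that this degenerate case has to be handled separately.

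Two small repairs are needed.

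\emph{The $q$-color case.} You claim that $A_1$ minus the pair needs a color outside $S_2$. This fails when all $s$ vertices of $A_1$ lie on shared-color pairs, which is possible when $\ell=s$ and each matched pair receives its own color. Your criterion only forces a color outside $S_2$ on a vertex of $A_1$ that is not on such a pair. Run the Case (b) dichotomy here as well: if $t<s$ you need at least $q+t>q$ colors, and if $t=s$ you need at least $q-2+s>q$.

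\emph{The closing parenthetical.} It is inaccurate, since $s\ge 2$ does not suffice. For $q=2$ and $s=\ell=3$ the same construction gives $C_6$. The $3!$ colorings that give each matched pair its own color are proper $3$-colorings missed by the formula, and indeed $P_{C_6}(3)=66\ne 60$. What excludes $t=s$ is $\ell<s$ or $s\ge 4$.
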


\begin{proof}
Let the deleted matching be $\{x_iy_i:1\leq i\leq\ell\}$, where $x_i\in A_1$ and $y_i\in A_2$. 
We first note that after deleting the endpoints of at most two matching edges, 
the remaining graph still contains $K_q$. 
Indeed, each of $A_1$ and $A_2$ then has at least $s-2\geq2$ remaining vertices. 
A remaining vertex of $A_1$ has at most one nonneighbor in $A_2$, so it has a remaining neighbor there; choose one vertex from every other part.

Consider a proper $q$-coloring. 
Suppose, for the sake of contradiction, that $x_i$ and $y_i$ have the same color. 
This color cannot occur on any other vertex: every vertex of $A_1\setminus\{x_i\}$ is adjacent to $y_i$, every vertex of $A_2\setminus\{y_i\}$ is adjacent to $x_i$, and every vertex in the other parts is adjacent to both. 
After removing $x_i$ and $y_i$, a graph containing $K_q$ would be colored with at most $q-1$ colors, a contradiction. 
Thus no deleted pair is monochromatic. Every color class is therefore contained in one original part. 
The $q$ nonempty parts use pairwise disjoint nonempty sets of colors, and only $q$ colors are available, so each part receives one color. Hence $P_H(q)=q!$.

Now consider proper $(q+1)$-colorings. 
If no deleted pair is monochromatic, the coloring is a proper coloring of the complete $q$-partite graph with the same parts. 
Colorings using exactly $q$ colors contribute $(q+1)q!$. 
If all $q+1$ colors are used, exactly one part uses two colors. 
Choosing this part, the two colors, a bijection from the remaining colors to the remaining parts, and a two-coloring of the selected part that uses both colors gives $q\binom{q+1}{2}(q-1)!(2^s-2)=q!\binom{q+1}{2}(2^s-2)$ colorings.

It remains to count colorings with a monochromatic deleted pair. 
There cannot be two such pairs. 
Their common colors would be distinct, each common color could occur only on its corresponding pair, and after removing the four endpoints the remaining graph would contain $K_q$ but use at most $q-1$ colors. 
Thus exactly one deleted pair is monochromatic. 
Choose it in $\ell$ ways and choose its common color in $q+1$ ways. 
This color is used nowhere else. 
No remaining deleted pair can be monochromatic, by the same argument, and the other $q$ colors must consequently be assigned bijectively to the $q$ original parts. 
This obtain $\ell(q+1)q!$ colorings. Adding all cases proves the formula.
\end{proof}

\subsection{Proof of Theorem \ref{thm:counterexamples}}

\begin{lemma}\label{ce:lem:growth}
If $r\geq3$ and $s\geq12\lceil\log_2r\rceil$, then
$$\left(\frac r{r-1}\right)^{(r-1)s}>r^2 2^s.$$
\end{lemma}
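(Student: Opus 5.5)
Taking logarithms base $2$, the claim $\left(\frac{r}{r-1}\right)^{(r-1)s}>r^2 2^s$ becomes
$$(r-1)s\log_2\!\left(1+\frac{1}{r-1}\right) > 2\log_2 r + s .$$
The plan is to bound the left-hand side from below by $(1+c)s$ for an absolute constant $c>0$. The slack $cs$ then has to beat $2\log_2 r$. The hypothesis $s\ge 12\lceil\log_2 r\rceil$ is designed for exactly this: it will suffice that $cs\ge 12c\lceil\log_2 r\rceil>2\log_2 r$, which needs $c>1/6$.

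For the lower bound, set $m=r-1\ge 2$. The quantity $\left(1+\frac1m\right)^m$ is increasing in $m$, so for $m\ge2$ it is at least $(3/2)^2=9/4$. Hence
$$\left(\frac r{r-1}\right)^{(r-1)s}\ge \left(\frac94\right)^s = 2^{s\log_2(9/4)} .$$
Since $\log_2(9/4)=2\log_2 3-2\approx 1.1699$, we may take $c=\log_2(9/4)-1\approx 0.1699$. This exceeds $1/6\approx0.1667$, but only narrowly, so this step needs care. To certify $\log_2(9/4)>7/6$, I will check the equivalent integer inequality $(9/4)^6>2^7$, that is, $9^6=531441>4^6\cdot 2^7=524288$, which holds.

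Combining the pieces: for $r\ge3$,
$$\left(\frac94\right)^s>2^{7s/6}=2^s\cdot 2^{s/6}\ge 2^s\cdot 2^{2\lceil\log_2 r\rceil}\ge 2^s r^2 ,$$
where the middle inequality uses $s\ge 12\lceil\log_2 r\rceil$. The first inequality is strict because $s>0$, so the desired strict inequality follows.

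The only delicate point is the constant. The bound $(1+1/m)^m\ge 9/4$ for $m\ge2$ gives a margin over the required $2^{7/6}$ of only about $0.3\%$, so I will verify it by the exact integer comparison above rather than by decimal estimates. The monotonicity of $(1+1/m)^m$ is standard (for instance via AM--GM), but if needed, a direct check suffices: for $m\ge2$, the value for $m=2$ is the minimum.
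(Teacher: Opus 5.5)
Your proof is correct and follows essentially the same route as the paper: both bound $(r/(r-1))^{r-1}\ge 9/4$, and both rest on the same integer check $9^6=531441>2^{19}$, which the paper writes as $(9/8)^6>2$ and then raises to the power $2\lceil\log_2 r\rceil$. The only cosmetic difference is how $9/4$ is obtained: you use monotonicity of $(1+1/m)^m$, while the paper keeps the first three terms of the binomial expansion, $1+1+\binom{r-1}{2}(r-1)^{-2}\ge 9/4$ for $r\ge3$.
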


\begin{proof}
By the binomial theorem,
$$\left(\frac r{r-1}\right)^{r-1}=\left(1+\frac1{r-1}\right)^{r-1}\geq1+1+\binom{r-1}{2}\frac1{(r-1)^2}\geq\frac94.$$
Since $(9/8)^6=531441/262144>2$ and $s\geq12\lceil\log_2r\rceil$,
$$\left(\frac r{r-1}\right)^{(r-1)s}
\geq\left(\frac94\right)^s
=2^s\left(\frac98\right)^s\geq2^s\left(\frac98\right)^{12\lceil\log_2r\rceil}
>2^s2^{2\lceil\log_2r\rceil}\geq r^22^s,$$
as required.
\end{proof}

\begin{proof}[Proof of Theorem~\ref{thm:counterexamples}]
Let $q=r-1$. 
Lemma~\ref{ce:lem:H-good} shows that every graph in the theorem is $K_r=K_{q+1}$-Tur\'an-good. 
In view of Corollary~\ref{ce:cor:criterion}, it remains to verify that
$$\frac{P_{H_{r,s,\ell}}(q)}{q^{qs}}>\frac{P_{H_{r,s,\ell}}(q+1)}{(q+1)^{qs}}.$$
By Lemma~\ref{ce:lem:H-color}, it is enough to prove
$$\left(\frac r{r-1}\right)^{(r-1)s}>r(\ell+1)+\binom r2(2^s-2).$$
Since $s\ge12\lceil\log_2r\rceil\ge24$, we have $s+1<2^s$. 
Thus, for $0\le\ell\le s$, we have $\ell+1\leq s+1<2^s$.
Therefore, by Lemma~\ref{ce:lem:growth}, we obtain
$$r(\ell+1)+\binom r2(2^s-2)<\left(r+\binom r2\right)2^s=\frac{r(r+1)}2\,2^s<r^22^s<\left(\frac r{r-1}\right)^{(r-1)s}.$$
This completes the proof.
\end{proof}

\section{Proof of Theorem \ref{thm:claw-stable}}\label{sec:claw}

Let $K_{n_1,\ldots,n_k}$ denote the complete $k$-partite graph with parts of sizes $n_1,\ldots,n_k$.
An edge of a graph $F$ is color-critical if deleting it lowers $\chi(F)$.
We use the following statement. 
It appears in the work of Hei, Hou and Liu~\cite{HeiHouLiu} and is recorded in the form needed here by Gerbner \cite{Gerbner2023}.

\begin{theorem}\label{claw:thm:stability-exactness}
Let $F$ have a color-critical edge, let $\chi(F)=k+1$, and let $\chi(H)\le k$. Suppose that, for every sufficiently large $n$, there is an $n$-vertex $F$-free graph $G$ satisfying $\N(H,G)=\ex(n,H,F)$ such that $G$ can be transformed into a complete $k$-partite graph by adding and deleting $o(n^2)$ edges. If
$$\N(H,K_{n_1,\ldots,n_k})\le \N(H,T_k(n))$$
for every complete $k$-partite graph of order $n$, then $H$ is $F$-Tur\'an-good.
\end{theorem}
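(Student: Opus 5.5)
The plan is to show that the extremal graph $G$ supplied by the hypothesis is itself $k$-partite. Once this is known, the conclusion follows at once. If $G$ is $k$-partite with parts $V_1,\ldots,V_k$, then $G$ is a subgraph of the complete $k$-partite graph $K_{|V_1|,\ldots,|V_k|}$. Since $\N(H,\cdot)$ is monotone under adding edges, we get
$\ex(n,H,F)=\N(H,G)\le \N(H,K_{|V_1|,\ldots,|V_k|})\le\N(H,T_k(n))$.
Finally, $T_k(n)$ is $F$-free because $\chi(F)=k+1$, so $T_k(n)$ is extremal.

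\emph{Setting up the partition.} Take $G$ as in the hypothesis, at edit distance $o(n^2)$ from some complete $k$-partite graph. Fix a $k$-partition $V_1,\ldots,V_k$ of $V(G)$ maximizing the number of crossing edges; then $\sum_i e(G[V_i])=o(n^2)$. The parts must also have linear size. The reason is that $\N(H,G)\ge\N(H,T_k(n))=\Theta(n^{v(H)})$, whereas a complete $k$-partite graph with a part of size $o(n)$ is essentially $(k-1)$-partite. Such a graph has noticeably fewer copies of $H$ than $T_k(n)$ at order $n^{v(H)}$, using the hypothesis that balanced complete $k$-partite graphs maximize $\N(H,\cdot)$ together with a continuity argument.

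\emph{Degree cleaning.} Next I would remove the atypical vertices. Call $v$ bad if it has at least $\gamma n$ non-neighbours in some part $V_j$ with $j\ne i$ (where $v\in V_i$), or at least $\gamma n$ neighbours inside its own part $V_i$. Counting edges shows there are only $o(n)$ bad vertices. For each bad $v$, compare $I_G(v)$, the number of copies of $H$ through $v$, with the count for a vertex placed into the least-costly part and joined to all good vertices of the other parts. Extremality of $G$ should force every vertex to lie in nearly the maximum possible number of copies of $H$. This pushes the degree of each vertex into all but one part up to $(1-\gamma)$ of that part.

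\emph{Excluding edges inside parts.} With the cleaned structure, suppose there is an edge $uw$ inside some $V_i$. I would use the colour-critical edge $e_0$ of $F$. The graph $F-e_0$ has a proper $k$-colouring in which the endpoints of $e_0$ receive the same colour. Map these endpoints to $u$ and $w$. Then embed the remaining vertices greedily into the common neighbourhoods inside the appropriate parts; these neighbourhoods have linear size because all vertices are now high-degree into the other parts. This produces a copy of $F$, a contradiction. The same argument handles a vertex with many neighbours in every part: such a vertex, together with the structure, already yields $F$. Hence $G$ is $k$-partite, which finishes the proof.

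\emph{Main obstacle.} The degree cleaning is the delicate step. The usual Zykov cloning used in Section~\ref{sec:cubic} need not preserve $F$-freeness, since $F$ is not a clique. Instead I would do the comparison directly. First, show that every good vertex $x$ has $I_G(x)\ge(1-o(1))\,v(H)\N(H,G)/n$. Second, show that a bad vertex loses a constant fraction of that, because $H$ has no isolated vertices and each vertex of $H$ must have its neighbours spread over $k-1$ parts. Third, replace the bad vertex by a new vertex adjacent only to the good vertices outside one part. This new vertex can be added to the $k$-partite core of good vertices without creating $F$, which again requires the colour-critical-edge embedding argument to be carried out carefully.
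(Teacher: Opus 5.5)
The paper does not prove this statement. It quotes it from Hei, Hou and Liu, in the form recorded by Gerbner, so there is no internal proof to compare with. Your architecture is the natural one: show the extremal $G$ is $k$-partite, then use $\N(H,G)\le\N(H,K_{|V_1|,\ldots,|V_k|})\le\N(H,T_k(n))$. Your final step, excluding edges inside parts via the critical edge, works once the parts are linear and every vertex misses few vertices of the other parts. However, the earlier steps have genuine gaps.

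\emph{Linear part sizes.} The hypothesis is a non-strict inequality. Continuity and compactness only upgrade a \emph{strict} inequality on the boundary of the simplex to a uniform $\Omega(n^{v(H)})$ deficit.
\begin{itemize}
\item You must show that $g(x)=|\mathrm{Aut}(H)|^{-1}\sum_c\prod_{u}x_{c(u)}$, summed over proper $k$-colourings $c$ of $H$, is not also maximised at some $x^*$ with $x^*_k=0$. By hypothesis $g$ is maximised at the balanced point.
\item This needs a first-order argument. For $x^*_i>0$, $\partial_kg(x^*)-\partial_ig(x^*)$ is $|\mathrm{Aut}(H)|^{-1}$ times the $x^*$-weight of pairs $(u,c')$, where $c'$ is a proper colouring of $H-u$ giving some neighbour of $u$ colour $i$.
\item This weight is positive once $H$ has an edge (permute colours inside the support of $x^*$). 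So moving mass from part $i$ to part $k$ would increase $g$, a contradiction.
\item For edgeless $H$ your plan is actually false: every graph is extremal, so the supplied $G$ need not be $k$-partite. This case, and isolated vertices of $H$ (which your counting tacitly excludes), must be removed first, e.g.\ via Lemma~\ref{cub:lem:isolates}.
\end{itemize}
Without linear parts, neither your $F$-embeddings nor the identification of the max-cut partition with the approximating one go through.

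\emph{Thresholds in the cleaning.} A single threshold $\gamma$ does not work. Take $v\in V_i$ with just under $\gamma n$ neighbours in $V_i$ and $\gamma n$ non-neighbours in some $V_j$. It gains and loses copies of $H$ of the same order $\Theta(\gamma n^{v(H)-1})$, so ``a bad vertex loses a constant fraction'' does not follow. The argument you postpone to the end must come first:
\begin{itemize}
\item By the max-cut property, $d_G(v,V_i)\le d_G(v,V_j)$.
\item $F\subseteq K_1\vee K_k(v(F))$, i.e.\ a vertex joined to a complete $k$-partite graph with parts of size $v(F)$. This holds because $F-x$ is $k$-colourable for an endpoint $x$ of the critical edge.
\item Together these give that every vertex has fewer than $\gamma_1n$ neighbours in its own part.
\end{itemize}
Only then call $v$ bad if it misses $\gamma_2n$ vertices of another part, with $\gamma_1\ll\gamma_2$. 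Compare it with a new vertex $v'$ placed in the same part.

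\emph{$F$-freeness of the replacement.} This must be checked in all of $G-v+v'$, which still contains the other bad vertices. Checking it on a ``$k$-partite core'' is not enough, and that core's $k$-partiteness has not yet been shown. The critical edge is not the right tool here. A copy of $F$ through $v'$ can be rerouted through a vertex of $V_i$ outside the copy that is adjacent to the at most $v(F)$ good neighbours of $v'$ in it. Such a vertex exists because good vertices miss at most $\gamma_2n$ vertices of $V_i$ and $|V_i|$ is linear. This gives $F\subseteq G$, a contradiction.

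With these three repairs the outline becomes a complete proof.
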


The next theorem will be used to show that an unbalanced complete multipartite graph cannot give a counterexample (for Question \ref{q:monotonicity}) for a claw-free graph.

\begin{theorem}\label{thm:claw-balance}
Let $H$ be a claw-free graph, let $k\ge2$, and let $n_1,\ldots,n_k$ be nonnegative integers with $n_1+\cdots+n_k=n$. Then
$$\inj(H,K_{n_1,\ldots,n_k})\le\inj(H,T_k(n)).$$
Equivalently, $\N(H,K_{n_1,\ldots,n_k})\le\N(H,T_k(n))$.
\end{theorem}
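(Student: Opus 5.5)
The plan is to balance two parts at a time. By a finite sequence of moves, each taking one vertex from a part of size $a$ to a part of size $b$ with $a\ge b+2$, any $(n_1,\ldots,n_k)$ can be turned into the part sizes of $T_k(n)$. So it suffices to show that one such move does not decrease $\inj(H,\cdot)$.

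\textbf{Reduction to two parts.} Fix the two parts $U_1,U_2$ being rebalanced and put $m=a+b$. Classify injective homomorphisms $\varphi:H\to K_{n_1,\ldots,n_k}$ by $S=\varphi^{-1}(U_1\cup U_2)$. As in the proof of Lemma~\ref{cub:lem:transfer}, every edge between $U_1\cup U_2$ and the other parts is present, so
$$\inj(H,K_{n_1,\ldots,n_k})=\sum_{S\subseteq V(H)}\inj(H[S],K_{a,b})\,\inj(H-S,K'),$$
where $K'$ is the complete $(k-2)$-partite graph on the other parts and does not depend on $(a,b)$. It therefore suffices to show, for each $S$, that $\inj(H[S],K_{a,b})\le \inj(H[S],K_{a-1,b+1})$ whenever $a\ge b+2$.

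\textbf{Structure of $F=H[S]$.} The summand vanishes unless $F$ is bipartite, so assume it is. Then $F$ is bipartite and claw-free, since induced subgraphs of claw-free graphs are claw-free. In a bipartite graph every neighbourhood is independent, so a vertex of degree at least $3$ would create an induced $K_{1,3}$. Hence $\Delta(F)\le2$, and every component of $F$ is a path (possibly a single vertex) or an even cycle.

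In each of its two proper $2$-colourings, a component has colour classes of sizes $(p,p)$ if it has an even number of vertices, and $(p+1,p)$ or $(p,p+1)$ if it has an odd number. Summing over components, let $u$ be the sum of the $p$'s, $t$ the number of odd components, and $e$ the number of even components. Then the proper colourings with $j$ odd components oriented towards $U_1$ have class sizes $(u+j,\,u+t-j)$, and there are $2^e\binom tj$ of them.

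\textbf{Vandermonde identity.} Counting injective homomorphisms colouring by colouring gives
$$\inj(F,K_{a,b})=2^e\sum_{j=0}^t\binom tj (a)_{u+j}(b)_{u+t-j}=2^e(a)_u(b)_u\sum_{j}\binom tj(a-u)_j(b-u)_{t-j}.$$
By the Vandermonde identity for falling factorials, the last sum equals $(a+b-2u)_t=(m-2u)_t$, which depends only on $m$. So
$$\inj(F,K_{a,b})=2^e(m-2u)_t\,(a)_u(b)_u .$$
It remains to check that $(a)_u(b)_u\le(a-1)_u(b+1)_u$ when $a\ge b+2$. Writing both sides as $\prod_{i<u}(a-i)(b-i)$ and $\prod_{i<u}(a-1-i)(b+1-i)$, each factor satisfies
$$(a-1-i)(b+1-i)-(a-i)(b-i)=a-b-1>0.$$
If some factor $b-i$ is nonpositive, the left side is zero and the inequality is trivial; otherwise every factor is positive and we can multiply the factorwise inequalities.

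The equivalence with $\N$ follows from $\inj=|\mathrm{Aut}(H)|\,\N$. The main obstacle is recognising that claw-freeness is what makes the two-part count collapse through Vandermonde. Without it, terms like $(a)_s+(b)_s$ coming from a star would favour unbalanced parts, so I would take care to justify the degree bound $\Delta(F)\le2$ and to handle the isolated-vertex components, which are odd components with $p=0$, correctly.
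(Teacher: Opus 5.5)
Your proposal is correct and follows the paper's proof essentially step for step. You use the same two-part balancing with the decomposition by $S=\varphi^{-1}(U_1\cup U_2)$, the same reduction of a claw-free bipartite $H[S]$ to paths and even cycles, and the same Vandermonde formula $2^e(m-2u)_t(a)_u(b)_u$ (the paper's Lemmas~\ref{claw:lem:induced-bipartite}--\ref{claw:lem:two-part}); the only cosmetic difference is that you compare $(a)_u(b)_u$ with $(a-1)_u(b+1)_u$ factor by factor rather than through the telescoped ratio.

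One small point that both you and the paper pass over: the fixed factor $(m-2u)_t$ is negative when $m<2u$ and $t$ is odd. In that case, however, both $(a)_u(b)_u$ and $(a-1)_u(b+1)_u$ vanish, so multiplying the inequality through by this factor is harmless.
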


Before we move to the technical proof of Theorem~\ref{thm:claw-balance}, 
we first show how this theorem  will be applied to complete the proof of Theorem~\ref{thm:claw-stable}.

\begin{proof}[Proof of Theorem~\ref{thm:claw-stable}]
Note that $F=K_{k+1}$ has a color-critical edge.
Therefore, the desired conclusion follows directly from Theorems~\ref{claw:thm:stability-exactness} and~\ref{thm:claw-balance}.
\end{proof}

\subsection{Proof of Theorem \ref{thm:claw-balance}}

The proof of Theorem~\ref{thm:claw-balance} first treats two host parts. 
We then fix the images outside two chosen parts and balance those two parts one vertex at a time.

We use the following form of Vandermonde's identity for falling factorials:
\begin{equation}\label{claw:eq:vandermonde}
\sum_{j=0}^{c}\binom{c}{j}\fall{u}{j}\fall{v}{c-j}=\fall{u+v}{c}.
\end{equation}
%It follows, for example, by counting injections from a $c$-element set into the disjoint union of sets of sizes $u$ and $v$, according to the number of elements mapped into the first set.

\begin{lemma}\label{claw:lem:induced-bipartite}
If $H$ is claw-free and $S\subseteq V(H)$ is such that $H[S]$ is bipartite, then every component of $H[S]$ is a path or an even cycle.
\end{lemma}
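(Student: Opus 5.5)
We argue via degrees in the induced subgraph $F=H[S]$. The plan is to show first that every vertex of $F$ has degree at most $2$ in $F$, and then that $F$ contains no odd cycle. A finite graph with maximum degree at most $2$ is a disjoint union of paths (including isolated vertices, viewed as trivial paths) and cycles. Since $F$ is bipartite, all of its cycles are even, and the lemma follows.

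For the degree bound, suppose some $x\in S$ has three neighbours $a,b,c\in S$. The vertex $x$ lies on one side of a bipartition of $F$, and $a,b,c$ all lie on the other side. Hence $a,b,c$ are pairwise nonadjacent in $F$. Because $F=H[S]$ is an induced subgraph, they are also pairwise nonadjacent in $H$. Therefore $\{x,a,b,c\}$ induces $K_{1,3}$ in $H$, which contradicts the assumption that $H$ is claw-free. So $\Delta(F)\le 2$.

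The main point to handle carefully is that inducedness is used in an essential way. Claw-freeness forbids induced claws in $H$, and we need nonadjacency in $H$ rather than merely in $F$. This holds precisely because $F$ is the subgraph induced by $S$, so every edge of $H$ among $a,b,c$ would already appear in $F$. With this observation, the standard structure theorem for graphs of maximum degree $2$ finishes the argument. Each component is a path or a cycle, and bipartiteness excludes odd cycles. No earlier results of the paper are needed.
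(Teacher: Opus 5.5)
Your proposal is correct and follows essentially the same argument as the paper. Both show $\Delta(H[S])\le 2$ by noting that three neighbours of a vertex lie on one side of the bipartition, hence are pairwise nonadjacent in $H$ by inducedness and would form an induced claw; both then apply the path/cycle structure of maximum-degree-two graphs and use bipartiteness to exclude odd cycles.
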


\begin{proof}
Suppose that some vertex $v\in V(H[S])$ has three distinct neighbors $x,y,z$ in $H[S]$. 
Since $H[S]$ is bipartite, the vertices $x,y,z$ belong to the same side of a bipartition and are therefore pairwise nonadjacent in $H[S]$. 
As $H[S]$ is induced in $H$, they are also pairwise nonadjacent in $H$. 
Hence $H[\{v,x,y,z\}]$ is an induced $K_{1,3}$, contradicting the assumption that $H$ is claw-free. 
Thus $\Delta(H[S])\le 2$. Every finite connected graph of maximum degree at most two is a path or a cycle, and every cycle in the bipartite graph $H[S]$ is even.
\end{proof}

\begin{lemma}\label{claw:lem:bipartite-formula}
Let $B$ be a bipartite graph whose components are paths or even cycles. For each component choose a bipartition and let $a$ be the sum of the smaller side sizes. Let $c$ be the number of components whose two side sizes are unequal, and let $e$ be the number of components whose two side sizes are equal. Then, for all nonnegative integers $x,y$,
\begin{equation}\label{claw:eq:formula}
\inj(B,K_{x,y})=2^e\fall{x}{a}\fall{y}{a}\fall{x+y-2a}{c}.
\end{equation}
\end{lemma}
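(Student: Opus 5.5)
The plan is to count injective homomorphisms $B\to K_{x,y}$ by first sorting them according to how each component of $B$ is placed relative to the two sides $X$ (of size $x$) and $Y$ (of size $y$) of $K_{x,y}$, and then summing with the Vandermonde identity \eqref{claw:eq:vandermonde}.

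\textbf{Step 1: the placement of each component is determined by an orientation.} Let $D$ be a component of $B$ with at least one edge. Since $D$ is connected, any homomorphism $D\to K_{x,y}$ sends one side of its chosen bipartition entirely into $X$ and the other side entirely into $Y$. Conversely, any injective map that sends the two sides of $D$ into opposite parts is a homomorphism, because $K_{x,y}$ is complete bipartite.

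A one-vertex component (a path with one vertex) has sides of sizes $0$ and $1$. Its single vertex may go to either part, so it fits the same description with its two orientations.

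Hence an injective homomorphism $B\to K_{x,y}$ is the same thing as two pieces of data:
\begin{itemize}
\item an \emph{orientation} of every component, meaning a choice of which side goes to $X$;
\item an injective map of the resulting vertex sets into $X$ and into $Y$.
\end{itemize}
These two pieces of data can be chosen independently.

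\textbf{Step 2: the side sizes.} Here the structure of the components matters. An even cycle and a path with an even number of vertices have equal sides. A path with an odd number of vertices has sides of sizes $s$ and $s+1$. So every unbalanced component has side sizes differing by exactly one.

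Now fix an orientation and suppose that exactly $j$ of the $c$ unbalanced components send their larger side to $X$. Then
\begin{itemize}
\item $X$ receives exactly $a+j$ vertices;
\item $Y$ receives exactly $a+(c-j)$ vertices.
\end{itemize}
This holds because each balanced component contributes equally to both parts, and the smaller sides sum to $a$.

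The number of injective placements for this orientation is therefore $(x)_{a+j}\,(y)_{a+c-j}$. For a fixed $j$, the number of orientations is $2^e\binom{c}{j}$: the factor $2^e$ comes from the balanced components, whose two orientations both give the same counts, and $\binom{c}{j}$ chooses which unbalanced components put their larger side in $X$.

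\textbf{Step 3: summing with Vandermonde.} Factor each falling factorial as
$$(x)_{a+j}=(x)_a\,(x-a)_j \quad\text{and}\quad (y)_{a+c-j}=(y)_a\,(y-a)_{c-j}.$$
Summing over $j$ then gives
$$\inj(B,K_{x,y})=2^e\,(x)_a\,(y)_a\sum_{j=0}^{c}\binom{c}{j}(x-a)_j\,(y-a)_{c-j}.$$
By \eqref{claw:eq:vandermonde} the sum equals $(x+y-2a)_c$, which yields \eqref{claw:eq:formula}.

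One detail must be checked when $x<a$ or $y<a$. In that case both sides of \eqref{claw:eq:formula} should be $0$. Indeed, $(x)_a=0$ or $(y)_a=0$ on the right, and no injective placement exists on the left. The factorization in Step 3 remains valid with these zero factors, since it holds as a polynomial identity.

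The main obstacle is Step 2: it relies on the side sizes of every component differing by at most one. This is exactly why the lemma requires the components to be paths or even cycles, and it is why the orientation count collapses into a single Vandermonde sum.
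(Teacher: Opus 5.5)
Your proof is correct and follows the paper's argument essentially step for step. You orient each component, observe that balanced components contribute the factor $2^e$, and note that having $j$ unbalanced components put their larger side in the $x$-part gives $\binom{c}{j}\fall{x}{a+j}\fall{y}{a+c-j}$; you then factor the falling factorials and apply Vandermonde's identity \eqref{claw:eq:vandermonde}. Your explicit check of the degenerate case $x<a$ or $y<a$ is a small addition that the paper leaves implicit.
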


\begin{proof}
For every component of $B$, the two sides of a bipartition have equal sizes or differ by one. This is immediate for paths and even cycles, including an isolated vertex, whose side sizes are $1$ and $0$.

Such a component can be placed in $K_{x,y}$ in two ways, obtained by swapping its two sides. Since the two sides have the same size, both choices put the same number of vertices in each part of $K_{x,y}$.
These components therefore contribute a factor $2^e$. 
Each of the remaining $c$ components has side sizes $b_i$ and $b_i+1$ for some $b_i\ge 0$. 
If exactly $j$ of these components send their larger side to the $x$-part, then the total numbers of vertices sent to the two host parts are $a+j$ and $a+c-j$. 
There are $\binom{c}{j}$ choices for those components. 
Once the orientations have been fixed, the vertices mapped into each host part may be injected independently. 
Consequently,
$$\inj(B,K_{x,y})=2^e\sum_{j=0}^{c}\binom{c}{j}\fall{x}{a+j}\fall{y}{a+c-j}.$$
Using $\fall{x}{a+j}=\fall{x}{a}\fall{x-a}{j}$ and the analogous identity for $y$, 
and then applying Vandermonde's identity \eqref{claw:eq:vandermonde}, we obtain
$$\inj(B,K_{x,y})=2^e\fall{x}{a}\fall{y}{a}\sum_{j=0}^{c}\binom{c}{j}\fall{x-a}{j}\fall{y-a}{c-j}=2^e\fall{x}{a}\fall{y}{a}\fall{x+y-2a}{c},$$
as expected.
\end{proof}

\begin{lemma}\label{claw:lem:two-part}
Let $H$ be claw-free and let $x,y$ be nonnegative integers with $y\ge x+2$. Then
$$\inj(H,K_{x,y})\le \inj(H,K_{x+1,y-1}).$$
\end{lemma}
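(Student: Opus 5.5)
The idea is to decompose injective homomorphisms of $H$ into $K_{x,y}$ by the set $S\subseteq V(H)$ of vertices sent to the $x$-part. Any injective homomorphism into $K_{x,y}$ has image inducing a bipartite structure. Every vertex of $H$ goes to one of the two parts, so all of $V(H)$ is used, and $H$ itself must be bipartite (otherwise both sides are $0$). Hence $\inj(H,K_{x,y})=\inj(B,K_{x,y})$ with $B=H$ bipartite. By Lemma~\ref{claw:lem:induced-bipartite} applied with $S=V(H)$, every component of $H$ is a path or an even cycle. Lemma~\ref{claw:lem:bipartite-formula} then gives
\[
\inj(H,K_{x,y})=2^e\fall{x}{a}\fall{y}{a}\fall{x+y-2a}{c},
\]
where $a,c,e$ depend only on $H$. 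If $H$ is not bipartite, both sides vanish and there is nothing to prove.

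Since $x+y$ is unchanged by the move $(x,y)\mapsto(x+1,y-1)$, the factors $2^e$ and $\fall{x+y-2a}{c}$ are the same on both sides. It therefore suffices to show
\[
\fall{x}{a}\fall{y}{a}\le \fall{x+1}{a}\fall{y-1}{a}\qquad\text{whenever } y\ge x+2 .
\]
The key step is to compare the two products factor by factor. Write
\[
\fall{x}{a}\fall{y}{a}=\prod_{i=0}^{a-1}(x-i)(y-i),\qquad \fall{x+1}{a}\fall{y-1}{a}=\prod_{i=0}^{a-1}(x+1-i)(y-1-i).
\]
For each $i$,
\[
(x+1-i)(y-1-i)-(x-i)(y-i)=y-x-1\ge 1>0 .
\]
Thus each factor pair on the right dominates the corresponding pair on the left, provided both are nonnegative.

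The one delicate point is the nonnegativity, and this is where I expect the only obstacle. The left product can contain a factor $x-i<0$ when $a>x$, but then it also contains the factor $x-x=0$, so the left side is $0$. The right side is a product of falling factorials of nonnegative integers, hence $\ge 0$. So the inequality holds in that case.

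If $a\le x$, all factors $x-i$ on the left are positive. Also $y-1-i\ge x+1-i>0$ for $i\le a-1\le x-1$. Hence every factor on both sides is nonnegative, and the termwise comparison $(x-i)(y-i)\le(x+1-i)(y-1-i)$ multiplies to give the claim.

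Combining the two cases with the common factors $2^e\fall{x+y-2a}{c}\ge 0$ proves the lemma.
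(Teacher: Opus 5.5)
Your proof is correct and follows the paper's argument: you reduce to $H$ bipartite with path/even-cycle components, apply the product formula, note that $2^e\fall{x+y-2a}{c}$ is unchanged, and compare $\fall{x}{a}\fall{y}{a}$ with $\fall{x+1}{a}\fall{y-1}{a}$; the only difference is that you compare factor by factor (each pair gains $y-x-1$), whereas the paper uses the telescoped ratio $\frac{x+1}{x+1-a}\cdot\frac{y-a}{y}$. One small precision: $\fall{x+y-2a}{c}\ge 0$ is guaranteed only when $a\le x$ (then $x+y-2a\ge 0$), which is the only case where you need it, since for $a>x$ the left side $\inj(H,K_{x,y})$ is already $0$.
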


\begin{proof}
If $H$ is not a bipartite graph, neither $K_{x,y}$ nor $K_{x+1,y-1}$ contains a copy of $H$. 
Hence both sides of the required inequality are zero. 
We suppose that $H$ is bipartite. 
By Lemma~\ref{claw:lem:induced-bipartite}, every component of $H$ is a path or an even cycle.

Consider the formula in Lemma~\ref{claw:lem:bipartite-formula}. 
Replacing $(x,y)$ by $(x+1,y-1)$ does not change $x+y$. 
Therefore, the factor $2^e\fall{x+y-2a}{c}$ in \eqref{claw:eq:formula} remains unchanged. 
Thus it is enough to prove
$$\fall{x+1}{a}\fall{y-1}{a}\ge \fall{x}{a}\fall{y}{a}.$$

If $\fall{x}{a}\fall{y}{a}=0$, the inequality follows immediately because the product on the left is nonnegative. We may therefore suppose that $x\ge a$ and $y\ge a$. If $a=0$, both products are equal to one. Suppose that $a\ge1$. Since all factors in the denominator below are positive, we may compare the two products by taking their ratio:
$$\frac{\fall{x+1}{a}\fall{y-1}{a}}{\fall{x}{a}\fall{y}{a}}=\frac{x+1}{x+1-a}\frac{y-a}{y}.$$
This ratio is at least one if and only if $(x+1)(y-a)\ge y(x+1-a)$. The difference between the two sides is $a(y-x-1)$, which is positive because $y\ge x+2$. Hence $\fall{x+1}{a}\fall{y-1}{a}\ge \fall{x}{a}\fall{y}{a}$. Substituting this inequality into \eqref{claw:eq:formula} completes the proof.
\end{proof}

We now prove the main theorem by balancing two host parts at a time.

\begin{proof}[Proof of Theorem~\ref{thm:claw-balance}]
It is enough to prove that if two part sizes satisfy $n_i\ge n_j+2$, 
then moving one vertex from the $i$th part to the $j$th part does not decrease the number of injective homomorphisms from $H$.

Let $X$ and $Y$ be the $i$th and $j$th parts, with $|X|=x$ and $|Y|=y$, where $x\ge y+2$. 
Let $R$ be the union of the remaining $k-2$ parts. 
We partition all injective homomorphisms from $H$ into the host graph according to the set $S=\varphi^{-1}(X\cup Y)$
and the restriction $\psi=\varphi|_{V(H)\setminus S}$.

Fix a set $S\subseteq V(H)$ and an injective homomorphism $\psi$ from $H-S$ into the complete $(k-2)$-partite graph induced by $R$. 
Every extension of $\psi$ that maps precisely the vertices of $S$ into $X\cup Y$ is obtained by an injective homomorphism from the induced graph $H[S]$ into $K_{x,y}$. 
Indeed, the edges with both endpoints in $S$ are respected exactly when the restriction to $S$ is such a homomorphism. 
Every edge having one endpoint in $S$ and one in $V(H)\setminus S$ is automatically respected because $X\cup Y$ is complete to every part contained in $R$. 
There are no further conditions, since graph homomorphisms do not have to preserve nonedges. 
Hence the number of extensions is exactly $\inj(H[S],K_{x,y})$.

The graph $H[S]$ is claw-free because it is induced in $H$. 
By Lemma~\ref{claw:lem:two-part}, we get
$$\inj(H[S],K_{y,x})\le \inj(H[S],K_{y+1,x-1}).$$
Since $K_{y,x}$ is isomorphic to $K_{x,y}$, this shows that replacing the two part sizes $(x,y)$ by $(x-1,y+1)$ does not decrease the number of extensions of any fixed pair $(S,\psi)$. Summing over all $S$ and all such $\psi$, we conclude that the total number of injective homomorphisms does not decrease.

Repeatedly apply this operation whenever two part sizes differ by at least two. The process terminates because the sum of the squares of the part sizes strictly decreases at every step. At termination all part sizes differ by at most one, so the resulting graph is $T_k(n)$. This proves
$$\inj(H,K_{n_1,\ldots,n_k})\le \inj(H,T_k(n)).$$
Finally, every unlabeled copy of $H$ gives exactly $|\mathrm{Aut}(H)|$ injective homomorphisms, and therefore
$$\inj(H,G)=|\mathrm{Aut}(H)|\N(H,G)$$
for every graph $G$. Dividing by $|\mathrm{Aut}(H)|$ proves the equivalent assertion for copies.
\end{proof}

\section{Concluding remarks}\label{sec:remarks}

The proof of Theorems~\ref{thm:cubic} and~\ref{thm:alln} also shows what must be improved to obtain a quadratic threshold. 
After the degree sensitive symmetrization, the minimum degree deficit has order $v(H)/r$. 
The sharpened multipartite lemma requires the density parameter to have order at most $v(H)^{-2}$. 
Combining these two bounds yields an upper bound of order $v(H)^3$ for the threshold $r_0(H)$.
A quadratic bound would require either a rebalancing error of order $O(\beta v(H))$ or a minimum-degree deficit of order $O(1/r)$, 
neither of which follows from the present estimates.

%Lemma~\ref{cub:lem:transfer} yields the smaller coefficient $2v(H)^2+2v(H)-4+v(H)^3/e(H)$ in place of $4v(H)^2+2v(H)-4$. 
%Thus denser graphs $H$ give better constants within the same argument. 
%We have stated the uniform bound directly in terms of $v(H)$ in order to keep the main theorem transparent.

By the works of \cite{Gerbner2023,GerbnerKarim,HeiHouLiu}, Theorem~\ref{thm:claw-stable} immediately implies the following:
(1). Let $M$ be a matching in $K_m$. If $\chi(K_m-M)\le k$, then $K_m-M$ is $K_{k+1}$-Tur\'an-good.
(2). If every nontrivial connected component of a graph $H$ is a path, a clique, or a copy of $C_4$, and $\chi(H)\le k$, then $H$ is $K_{k+1}$-Tur\'an-good; arbitrary isolated vertices are allowed.

We also propose the following question: Characterize the graphs $H$ such that if $H$ is $K_r$-Tur\'an-good, then $H$ is also $K_{r+1}$-Tur\'an-good.

\section*{Acknowledgements}
\noindent 
This article was submitted in July 2026. The same problem was considered independently by Wang, Kang, and Zhao \cite{Wang2026}.
During the preparation of this work, the author used OpenAI's ChatGPT (GPT-5.6 Pro) to assist with language editing, organization, and preliminary checks of algebraic and logical consistency.
This research is supported by National Key R\&D Program of China under grant number 2024YFA1013900, NSFC under grant number 12471327, 
and the China Postdoctoral Science Foundation under Grant Number 2026M793375.

\appendix

\section{Two further counterexample families}\label{app:further-counterexamples}

We now define and prove the two further families mentioned in the introduction. Fix $r\ge3$ and let $A_1,\ldots,A_{r-1}$ be disjoint sets. For $s\ge1$ and $1\le t\le r-2$, let $G_{r,s,t}$ be the complete $(r-1)$-partite graph with $t$ parts of size $s+1$ and $r-1-t$ parts of size $s$. Thus $G_{r,s,t}=T_{r-1}((r-1)s+t)$. For $s\ge3$ and $2\le d\le s-1$, let $J_{r,s,d}$ be obtained from the equal-part complete $(r-1)$-partite graph by choosing $u\in A_1$ and distinct vertices $v_1,\ldots,v_d\in A_2$ and deleting $uv_1,\ldots,uv_d$.

\begin{theorem}\label{app:thm:further-counterexamples}
Let $r\ge3$ and $s\ge12\lceil\log_2r\rceil$. Then every $G_{r,s,t}$ with $1\le t\le r-2$ and every $J_{r,s,d}$ with $2\le d\le s-1$ is $K_r$-Tur\'an-good but is not $K_{r+1}$-Tur\'an-good.
\end{theorem}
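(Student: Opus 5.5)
The plan is to follow the proof of Theorem~\ref{thm:counterexamples} for both families. I will get $K_r$-goodness from Theorem~\ref{ce:thm:Gyori1991} with $q=r-1$, and failure of $K_{r+1}$-goodness from Corollary~\ref{ce:cor:criterion}. For the second part I will compute $P_F(q)$ and $P_F(q+1)$ exactly and compare them using Lemma~\ref{ce:lem:growth}.

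\emph{$K_r$-goodness.} For $G_{r,s,t}=T_{q}(qs+t)$ this is immediate. Taking one vertex from each part gives $s=\lfloor (qs+t)/q\rfloor$ disjoint copies of $K_q$, since $t\le q-1$. The $q$-skeleton is connected because any two transversals of a complete $q$-partite graph are joined by swapping one vertex at a time.

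For $J_{r,s,d}$ I first need a perfect matching between $A_1$ and $A_2$ avoiding the deleted edges. Since $d\le s-1$, the vertex $u$ has a neighbour $w\in A_2$. Matching $u$ with $w$ and the rest of $A_1$ arbitrarily with $A_2\setminus\{w\}$ works, because $A_1\setminus\{u\}$ is complete to $A_2$. Extending by indices in $A_3,\dots,A_q$ gives $s$ disjoint copies of $K_q$. For connectivity of the skeleton I will copy the argument of Lemma~\ref{ce:lem:H-good}. Fix $Q^*$ built on an edge $u'v$ with $u'\in A_1\setminus\{u\}$. Given any copy $Q$ containing $x\in A_1$ and $y\in A_2$, replace $y$ by $z=w$ (which is adjacent to both $x$ and $u'$), then $x$ by $u'$, then $z$ by $v$, then the remaining parts one vertex at a time. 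Every vertex lies in some $K_q$, so the skeleton is connected.

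\emph{Colouring counts.} For $G_{r,s,t}$ every colour class lies inside a part, so each part receives a nonempty set of colours and these sets are pairwise disjoint. Hence $P(q)=q!$. For $P(q+1)$ there are two cases: all $q+1$ colours are used and exactly one part is $2$-coloured with both colours, or only $q$ colours are used. This gives
\[
\frac{P(q+1)}{q!}=(q+1)+\binom{q+1}{2}\bigl[t(2^{s+1}-2)+(q-t)(2^s-2)\bigr]/q\le r+\binom r2 2^{s+1}<r^2 2^s .
\]
Since $v(G_{r,s,t})\ge (r-1)s$, we have $(r/(r-1))^{v}\ge (r/(r-1))^{(r-1)s}>r^22^s$ by Lemma~\ref{ce:lem:growth}. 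So the inequality of Corollary~\ref{ce:cor:criterion} holds.

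For $J_{r,s,d}$, an independent set meeting two parts must have the form $\{u\}\cup T$ with $\emptyset\ne T\subseteq\{v_1,\dots,v_d\}$. Suppose such a class occurs in a $q$-colouring. Removing $u$ and all $v_i$ leaves a graph that still contains $K_q$, because $A_2$ keeps $s-d\ge1$ vertices and $A_1\setminus\{u\}$ is nonempty and complete to $A_2$. That graph would be coloured with the remaining $q-1$ colours, a contradiction, so $P(q)=q!$. In a $(q+1)$-colouring with a mixed class $\{u\}\cup T$, its colour is used nowhere else and the other $q$ parts get the other $q$ colours bijectively. This gives $(2^d-1)(q+1)q!$ colourings. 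The non-mixed colourings number $q!\bigl[(q+1)+\binom{q+1}{2}(2^s-2)\bigr]$, exactly as in Lemma~\ref{ce:lem:H-color}. Hence
\[
\frac{P(q+1)}{q!}=r2^d+\binom r2(2^s-2)<\Bigl(r+\binom r2\Bigr)2^s<r^22^s<\Bigl(\frac r{r-1}\Bigr)^{(r-1)s},
\]
and Corollary~\ref{ce:cor:criterion} finishes the proof.

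The main obstacle I expect is the exact enumeration for $J_{r,s,d}$. One must check that a mixed class can only be of the form $\{u\}\cup T$, that at most one such class occurs, and that all the remaining classes lie in single parts. This is where the hypothesis $d\le s-1$, which keeps a $K_q$ after deletion, is used.
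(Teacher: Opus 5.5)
Your proposal is correct and follows the paper's proof essentially step for step. You use Theorem~\ref{ce:thm:Gyori1991} for $K_r$-goodness, with the matching through a neighbour of $u$ in $A_2$. Your exact counts $P(q)=q!$ and $P(q+1)$ agree with Lemmas~\ref{ce:lem:G-color} and~\ref{ce:lem:J-color} (your $\binom{q+1}{2}/q=(q+1)/2$ and $r2^d+\binom r2(2^s-2)$ match the paper's formulas), and you close with Lemma~\ref{ce:lem:growth} and Corollary~\ref{ce:cor:criterion}, exactly as the paper does. The only differences are cosmetic: you route the $J_{r,s,d}$ skeleton argument through $w$ as in Lemma~\ref{ce:lem:H-good}, and you use the slightly cruder but still sufficient bound $r+\binom r2 2^{s+1}<r^2 2^s$ for $G_{r,s,t}$.
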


The graphs $G_{r,s,t}$ are complete multipartite with unequal parts and the graph $J_{r,s,d}$ is not complete multipartite. 
Moreover, $J_{r,s,d}$ is not isomorphic to any $H_{r,s',\ell}$. 
Equality of their orders would give $s'=s$, but $u$ has degree $(r-2)s-d$ in $J_{r,s,d}$, while every vertex of $H_{r,s,\ell}$ has degree at least $(r-2)s-1$.

\subsection{The \texorpdfstring{$K_r$}{K-r}-Tur\'an-good property}

The following two proofs use the theorem of Gy\H{o}ri, Pach, and Simonovits \cite{Gyori1991} stated as Theorem~\ref{ce:thm:Gyori1991}.

\begin{lemma}\label{ce:lem:G-good}
Let $r\geq3$, $s\geq1$, and $1\leq t\leq r-2$. Then $G_{r,s,t}$ is $K_r$-Tur\'an-good.
\end{lemma}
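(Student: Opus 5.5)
The plan is to apply Theorem~\ref{ce:thm:Gyori1991} with $q=r-1$ and $F=G_{r,s,t}$, in the same way as in the proof of Lemma~\ref{ce:lem:H-good}. The graph $F$ is complete $q$-partite. It has $t$ parts of size $s+1$ and $q-t$ parts of size $s$, so $m=qs+t$ and $m\ge q$ since $s\ge1$. Because $1\le t\le q-1$, we have $\lfloor m/q\rfloor=s$. Since $F$ is connected, it suffices to check two hypotheses: that $F$ contains $s$ vertex-disjoint copies of $K_q$, and that the $q$-skeleton of $F$ is connected.

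For the first hypothesis, label the vertices of each part as $a_{j,1},\dots,a_{j,|A_j|}$. For $1\le i\le s$, let $K^{(i)}=\{a_{1,i},\dots,a_{q,i}\}$. Each $K^{(i)}$ is a transversal of the parts, so it induces $K_q$ in the complete $q$-partite graph. These $s$ sets are pairwise disjoint. The extra vertex $a_{j,s+1}$ in each of the $t$ larger parts is simply left unused.

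For the second hypothesis, note that the copies of $K_q$ in $F$ are exactly the transversals, that is, the sets containing one vertex from each part. Two transversals are adjacent exactly when they differ in one part. Given two transversals $Q$ and $Q'$, change the vertex in part $1$, then in part $2$, and so on. Every intermediate set is still a transversal, and each genuine change alters one vertex, so consecutive sets are adjacent copies of $K_q$. Every vertex lies in some transversal. Hence any two vertices $x$ and $y$ can be placed in transversals containing them, and these transversals can then be joined by a chain of adjacent copies. Therefore the $q$-skeleton is connected.

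Theorem~\ref{ce:thm:Gyori1991} then gives $\N(G_{r,s,t},X)\le\N(G_{r,s,t},T_{r-1}(n))$ for every $n$-vertex $K_r$-free graph $X$, which is the $K_r$-Tur\'an-good property. I do not expect a real obstacle here. The only point that needs care is the floor computation $\lfloor (qs+t)/q\rfloor=s$, which uses $t\le q-1=r-2$; this is exactly the hypothesis in the statement.
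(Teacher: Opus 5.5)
Your proposal is correct and takes essentially the same route as the paper. Both apply the Gy\H{o}ri--Pach--Simonovits theorem with $q=r-1$, using $\lfloor (qs+t)/q\rfloor=s$, the $s$ disjoint transversals, and one-part-at-a-time changes of transversals to show that the $q$-skeleton is connected.
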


\begin{proof}
Set $q=r-1$. The graph $G_{r,s,t}$ is a complete $q$-partite graph on $qs+t$ vertices, so $\lfloor(qs+t)/q\rfloor=s$. Choose $s$ vertices from every part and label the chosen vertices in each part by $1,\ldots,s$. For every $1\leq i\leq s$, the vertices labeled $i$, one from each part, form a copy of $K_q$. These $s$ copies are vertex-disjoint.

Every copy of $K_q$ contains exactly one vertex from each part. Given two such copies, change their vertices one part at a time. Every intermediate set containing one vertex from each part is again a copy of $K_q$, and two consecutive copies have exactly $q-1$ common vertices. Moreover, every vertex belongs to a copy of $K_q$ containing one vertex from each part. Thus the $q$-skeleton is connected. Theorem~\ref{ce:thm:Gyori1991} now shows that $G_{r,s,t}$ is $K_r$-Tur\'an-good.
\end{proof}

\begin{lemma}\label{ce:lem:J-good}
Let $r\geq3$, $s\geq3$, and $1\leq d\leq s-1$. Then $J_{r,s,d}$ is $K_r$-Tur\'an-good.
\end{lemma}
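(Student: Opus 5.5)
We apply Theorem~\ref{ce:thm:Gyori1991} with $q=r-1$, exactly as in the proofs of Lemmas~\ref{ce:lem:H-good} and~\ref{ce:lem:G-good}. Write $A_j=\{a_{j,1},\ldots,a_{j,s}\}$. Take $u=a_{1,1}$ and $v_i=a_{2,i}$ for $1\le i\le d$, so the deleted edges are $a_{1,1}a_{2,i}$ for $i\le d$. The graph $J_{r,s,d}$ is $q$-partite on $qs$ vertices, so it suffices to check two things: that it contains $s$ vertex-disjoint copies of $K_q$, and that its $q$-skeleton is connected. It is connected as a graph, since $d\le s-1$ gives $u$ a neighbour in $A_2$ and $u$ is complete to the parts $A_3,\ldots,A_q$ when $q\ge3$.

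\emph{Disjoint cliques.} Choose a permutation $\pi$ of $\{1,\ldots,s\}$ with $\pi(1)=s$ and $\pi(s)=1$, fixing all other indices. Because $d\le s-1$, the edge $a_{1,1}a_{2,s}$ is present. The pair $a_{1,s}a_{2,1}$ does not involve $u$, so it is also present. The sets $\{a_{1,i},a_{2,\pi(i)},a_{3,i},\ldots,a_{q,i}\}$ for $1\le i\le s$ are then $s$ disjoint copies of $K_q$. The only possibly missing edges lie between $A_1$ and $A_2$ and are incident to $u$, and $u$ is only paired with $a_{2,s}$.

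\emph{Skeleton connectivity.} Fix a reference clique $Q^*$ that uses $a_{1,2}$, $a_{2,1}$ and fixed vertices of $A_3,\ldots,A_q$. Let $Q$ be any copy of $K_q$, with vertices $x\in A_1$ and $y\in A_2$.
\begin{itemize}
\item If $x\ne u$, then $x$ is complete to $A_2$. We can replace $y$ by $a_{2,1}$, then $x$ by $a_{1,2}$ (which is complete to $A_2$), and finally adjust the vertices in $A_3,\ldots,A_q$ one at a time.
\item If $x=u$, then $y\in\{a_{2,d+1},\ldots,a_{2,s}\}$. We first replace $u$ by $a_{1,2}$; this is legitimate because $a_{1,2}$ is adjacent to $y$ and to everything else. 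We are then in the previous case.
\end{itemize}
Every vertex lies in some copy of $K_q$: the vertex $u$ uses the edge $ua_{2,s}$, each $v_i$ uses the edge $a_{1,2}v_i$, and all other vertices are handled directly.

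Both hypotheses of Theorem~\ref{ce:thm:Gyori1991} hold, which gives $K_r$-Tur\'an-goodness. The statement allows $d=1$, and the argument covers it with no change. The case $q=2$, i.e.\ $r=3$, also works: the parts $A_3,\ldots,A_q$ are then absent and $J$ is bipartite.

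The main thing to verify carefully is the case $x=u$ in the skeleton step, where the substitution must avoid the deleted edges. The bound $d\le s-1$ is exactly what guarantees that $u$ lies in some clique at all.
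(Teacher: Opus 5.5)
Your proposal is correct and follows essentially the same route as the paper: you apply Theorem~\ref{ce:thm:Gyori1991} with $q=r-1$, pair $u$ with a vertex of $A_2\setminus\{v_1,\ldots,v_d\}$ to get $s$ disjoint copies of $K_q$, and link every clique to a fixed reference clique by moving its $A_1$-vertex off $u$ onto a vertex complete to $A_2$. Your explicit permutation and the case split $x\ne u$ versus $x=u$ are a concrete instance of the paper's choice of $y\in A_2\setminus D$ and $x^*\in A_1\setminus\{u\}$, and the paper's single replacement order covers both of your cases at once.
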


\begin{proof}
Set $q=r-1$, let the parts be $A_1,\ldots,A_q$, and let the deleted edges be $uv_1,\ldots,uv_d$, where $u\in A_1$ and $v_1,\ldots,v_d\in A_2$. Choose $y\in A_2\setminus\{v_1,\ldots,v_d\}$. There is a bijection from $A_1$ to $A_2$ that maps $u$ to $y$. Choose arbitrary bijections from $A_1$ to each of $A_3,\ldots,A_q$. For each $x\in A_1$, take $x$ together with its images under these bijections. The resulting $s$ sets are vertex-disjoint copies of $K_q$: the only deleted edges are incident with $u$, and the vertex of $A_2$ paired with $u$ is $y$. Since the graph has $qs$ vertices, this gives $\lfloor qs/q\rfloor=s$ vertex-disjoint copies of $K_q$.

Choose $x^*\in A_1\setminus\{u\}$, keep the vertex $y\in A_2\setminus\{v_1,\ldots,v_d\}$ fixed, and choose one fixed vertex in every other part. These vertices form a copy $Q^*$ of $K_q$. Let $Q$ be any copy of $K_q$. First replace its vertex in $A_1$ by $x^*$; this is valid because $x^*$ is adjacent to every vertex of $A_2$. Next replace its vertex in $A_2$ by $y$, and then replace the vertices in the remaining parts one at a time by those of $Q^*$. Every intermediate set is a copy of $K_q$, so every copy of $K_q$ is joined to $Q^*$ in the $q$-skeleton.

Every vertex belongs to a copy of $K_q$. The vertex $u$ can be used with $y$; every vertex of $A_2$ can be used with $x^*$; and a vertex in any other part can be used together with $x^*$, $y$, and arbitrary vertices from the remaining parts. Therefore the $q$-skeleton is connected. Theorem~\ref{ce:thm:Gyori1991} implies that $J_{r,s,d}$ is $K_r$-Tur\'an-good.
\end{proof}

\begin{lemma}\label{ce:lem:G-color}
Let $q\geq2$, $s\geq1$, and $1\leq t\leq q-1$. For $G=G_{q+1,s,t}$,
$$P_G(q)=q!,$$
$$P_G(q+1)=q!\left[(q+1)+\frac{q+1}{2}\left(t(2^{s+1}-2)+(q-t)(2^s-2)\right)\right].$$
\end{lemma}

\begin{proof}
Different parts of a complete multipartite graph must use disjoint sets of colors. In a proper $q$-coloring of $G$, the $q$ nonempty parts therefore use exactly one color each, giving $P_G(q)=q!$.

For a proper $(q+1)$-coloring, first suppose that exactly $q$ colors are used. Choose the unused color and assign the other colors bijectively to the parts; this gives $(q+1)q!$ colorings. If all $q+1$ colors are used, exactly one part uses two colors and all other parts use one. A part of size $a$ contributes $\binom{q+1}{2}(q-1)!(2^a-2)$ colorings. There are $t$ parts of size $s+1$ and $q-t$ parts of size $s$. Summing these contributions and using $\binom{q+1}{2}(q-1)!=q!(q+1)/2$ gives the stated formula.
\end{proof}

\begin{lemma}\label{ce:lem:J-color}
Let $q\geq2$, $s\geq3$, and $1\leq d\leq s-1$. For $J=J_{q+1,s,d}$,
$$P_J(q)=q!\quad \text{and} \quad P_J(q+1)=q!\left[(q+1)+\binom{q+1}{2}(2^s-2)+(q+1)(2^d-1)\right].$$
\end{lemma}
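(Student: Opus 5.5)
The plan is to count proper colorings directly, as in Lemma~\ref{ce:lem:H-color}. The key structural fact is that the only nonedges of $J=J_{q+1,s,d}$ between different parts are $uv_1,\dots,uv_d$. So a color class not contained in a single part must contain $u$ together with a nonempty subset of $\{v_1,\dots,v_d\}$. Write $V=\{v_1,\dots,v_d\}$ throughout.

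\emph{Step 1: a color shared by $u$ and some $v_i$.} Suppose a color $c$ appears on $u$ and on some $v_i$. Let $T\subseteq V$ be the set of vertices of $V$ colored $c$. I claim that $c$ appears exactly on $\{u\}\cup T$.
\begin{itemize}
\item Every vertex of $A_1\setminus\{u\}$ is adjacent to $v_i$, so none of them has color $c$.
\item Every vertex of $A_2\setminus V$ is adjacent to $u$, so none of them has color $c$.
\item Every vertex of $A_j$ with $j\ge3$ is adjacent to $u$, so none of them has color $c$.
\end{itemize}
Now $J-(\{u\}\cup T)$ is the complete $q$-partite graph on $A_1\setminus\{u\},A_2\setminus T,A_3,\dots,A_q$, because all deleted edges were incident with $u$. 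Its parts are nonempty: $|A_1\setminus\{u\}|=s-1\ge2$, and $A_2\setminus T$ contains $A_2\setminus V$, which has $s-d\ge1$ vertices. Hence this graph contains $K_q$ and needs at least $q$ colors, none equal to $c$.

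\emph{Step 2: $P_J(q)$.} With only $q$ colors, Step~1 is impossible, since the remaining $K_q$ would have only $q-1$ colors available. So every color class lies inside one part. The $q$ nonempty parts then use disjoint nonempty color sets, so each part is monochromatic with its own color, which gives $P_J(q)=q!$.

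\emph{Step 3: $P_J(q+1)$ with no shared color.} If no color is shared by $u$ and a $v_i$, the coloring is exactly a proper coloring of the complete $q$-partite graph with parts of size $s$. Counting as in Lemma~\ref{ce:lem:H-color}:
\begin{itemize}
\item Colorings using exactly $q$ colors number $(q+1)q!$.
\item Colorings using all $q+1$ colors have exactly one part with two colors. Choosing that part, its two colors, the bijection for the remaining parts, and a surjective two-coloring of the part gives $q\binom{q+1}{2}(q-1)!(2^s-2)=q!\binom{q+1}{2}(2^s-2)$.
\end{itemize}

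\emph{Step 4: $P_J(q+1)$ with a shared color.} Otherwise, choose $c$ in $q+1$ ways and $T\subseteq V$ nonempty in $2^d-1$ ways. By Step~1, the rest of the graph is a complete $q$-partite graph with nonempty parts, colored from the remaining $q$ colors. It therefore receives a bijective part coloring, in $q!$ ways. Only one color can be shared with $u$, since $u$ has a single color, so there is no double counting. This contributes $(q+1)(2^d-1)q!$, and adding the three contributions gives the stated formula.

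The main point needing care is Step~1: checking that $c$ cannot reappear elsewhere, while allowing it on vertices of $V\setminus\{v_i\}$ (hence the subset $T$ and the factor $2^d-1$), and verifying that the leftover graph is complete $q$-partite with nonempty parts. That verification uses $d\le s-1$ and $s\ge3$.
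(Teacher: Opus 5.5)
Your proposal is correct and follows essentially the same route as the paper. You show that a color shared by $u$ and some $v_i$ occurs exactly on $\{u\}\cup S$ for a nonempty $S\subseteq\{v_1,\ldots,v_d\}$, and that the rest is a complete $q$-partite graph with nonempty parts, which rules out this case for $P_J(q)$. For $P_J(q+1)$ you then split into the no-monochromatic-pair case (colorings of the balanced complete $q$-partite graph) and the shared-color case, contributing $(q+1)(2^d-1)q!$.
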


\begin{proof}
Let the deleted edges be $uv_1,\ldots,uv_d$, where $u\in A_1$, and set $D=\{v_1,\ldots,v_d\}\subseteq A_2$. The sets $A_1\setminus\{u\}$, $A_2\setminus D$, and $A_3,\ldots,A_q$ are all nonempty and induce a complete $q$-partite graph.

Consider a proper $q$-coloring. If $u$ and some $v_i$ had the same color, then this color could not be used on any vertex of the complete $q$-partite graph just described: vertices of $A_1\setminus\{u\}$ are adjacent to $v_i$, vertices of $A_2\setminus D$ are adjacent to $u$, and vertices in the other parts are adjacent to both. That complete $q$-partite graph contains $K_q$ and would have to use at most $q-1$ colors, a contradiction. Hence no deleted pair is monochromatic. The coloring is then a proper $q$-coloring of the complete $q$-partite graph with parts $A_1,\ldots,A_q$, and therefore $P_J(q)=q!$.

For $(q+1)$ colors, first count colorings in which no deleted pair is monochromatic. These are precisely the proper colorings of the complete $q$-partite graph with equal parts, and their number is $q![(q+1)+\binom{q+1}{2}(2^s-2)]$ by the same argument as above.

Now suppose that at least one deleted pair is monochromatic. Let $c$ be the color of $u$ and let $S=\{v_i\in D:v_i\text{ has color }c\}$. Then $S$ is a nonempty subset of $D$. The color $c$ occurs exactly on $\{u\}\cup S$: it cannot occur in $A_1\setminus\{u\}$ because these vertices are adjacent to every vertex of $S$, it cannot occur in $A_2\setminus D$ because these vertices are adjacent to $u$, and vertices in the other parts are adjacent to $u$. Conversely, after choosing $c$ and a nonempty subset $S\subseteq D$, the remaining graph is the complete $q$-partite graph with nonempty parts $A_1\setminus\{u\}$, $A_2\setminus S$, $A_3,\ldots,A_q$. It must be colored with the remaining $q$ colors, one color on each part, in $q!$ ways. Thus this case contributes $(q+1)(2^d-1)q!$ colorings. Adding both cases proves the formula.
\end{proof}

\subsection{Proof of Theorem~\ref{app:thm:further-counterexamples}}

\begin{proof}
Setting $q=r-1$. 
Lemmas~\ref{ce:lem:G-good} and~\ref{ce:lem:J-good} show that all graphs in the theorem are $K_r=K_{q+1}$-Tur\'an-good. 
We use Corollary~\ref{ce:cor:criterion} and Lemma~\ref{ce:lem:growth} to prove that they are not $K_{r+1}$-Tur\'an-good.

Let $G=G_{r,s,t}$. It has $qs+t$ vertices. By Lemma~\ref{ce:lem:G-color}, it is enough to prove
$$\left(\frac r{r-1}\right)^{(r-1)s+t}>r+\frac r2\left(t(2^{s+1}-2)+(r-1-t)(2^s-2)\right).$$
The left-hand side is larger than the left-hand side in Lemma~\ref{ce:lem:growth}. Since $1\le t\le r-2$, the right-hand side is less than $r+\frac r2(2r-3)2^s<r^22^s$. Hence the displayed inequality holds, and Corollary~\ref{ce:cor:criterion} shows that $G$ is not $K_{r+1}$-Tur\'an-good.

Now let $J=J_{r,s,d}$. It has $qs$ vertices. Lemma~\ref{ce:lem:J-color} reduces the required inequality to
$$\left(\frac r{r-1}\right)^{(r-1)s}>r+\binom r2(2^s-2)+r(2^d-1).$$
Because $d\le s-1$, the right-hand side is less than
$$r+\frac{r(r-1)}2\,2^s+r2^{s-1}=r+\frac{r^2}{2}\,2^s<r^22^s.$$
Lemma~\ref{ce:lem:growth} and Corollary~\ref{ce:cor:criterion} imply that $J$ is not $K_{r+1}$-Tur\'an-good. 

In both cases, we are done.
\end{proof}
\end{document}